\newtheorem{thm}{Theorem}[section]
\newtheorem{cor}[thm]{Corollary}
\newtheorem{lem}[thm]{Lemma}
\newtheorem{prp}[thm]{Proposition}
\newtheorem{rem}[thm]{Remark}
\theoremstyle{definition}
\newcommand{\scr}[1]{\mathscr #1}
\definecolor{wco}{rgb}{0.5,0.2,0.3}
\numberwithin{equation}{section} \theoremstyle{remark}
\newcommand{\ua}{\uparrow}
\newcommand{\mathbbm}[1]{\text{\usefont{U}{bbm}{m}{n}#1}}
\title{{\bf      Wasserstein  Convergence Rates for Empirical Measures of Random Subsequence of $\{n\alpha\}$ }}
\author{
{\bf    Bingyao Wu$^{a)}$ \, \  Jie-Xiang Zhu$^{b)}$  }\\
\footnotesize{ $^{a)}$ School of Mathematics and Statistics, Fujian Normal University, Fuzhou 350007, China}\\
\footnotesize{$^{b)}$ Department of Mathematics, Shanghai Normal University, Shanghai 200234, China }\\
 \footnotesize{  bingyaowu@163.com;  jiexiangzhu7@gmail.com } }
\begin{document}
\allowdisplaybreaks
\def\R{\mathbb R}  \def\ff{\frac} \def\ss{\sqrt}  \def\B{\mathbf B} \def\TO{\mathbb T}
\def\I{\mathbb I_{\pp M}}\def\p<{\preceq}
\def\N{\mathbb N} \def\kk{\kappa} \def\m{{\bf m}}
\def\ee{\varepsilon}\def\ddd{D^*}
\def\dd{\delta} \def\DD{\Delta} \def\vv{\varepsilon} \def\rr{\rho}
\def\<{\langle} \def\>{\rangle} \def\GG{\Gamma} \def\gg{\gamma}
  \def\nn{\nabla} \def\pp{\partial} \def\E{\mathbb E}
\def\d{\text{\rm{d}}} \def\bb{\beta} \def\aa{\alpha} \def\D{\scr D}
  \def\si{\sigma} \def\ess{\text{\rm{ess}}}
\def\beg{\begin} \def\beq{\begin{equation}}  \def\F{\scr F}
\def\Ric{{\rm Ric}} \def\Hess{\text{\rm{Hess}}}
\def\e{\text{\rm{e}}} \def\ua{\underline a} \def\OO{\Omega}  \def\oo{\omega}
 \def\tt{\tilde}
\def\cut{\text{\rm{cut}}} \def\P{\mathbb P} \def\ifn{I_n(f^{\bigotimes n})}
\def\C{\scr C}      \def\aaa{\mathbf{r}}     \def\r{r}
\def\gap{\text{\rm{gap}}} \def\prr{\pi_{{\bf m},\varrho}}  \def\r{\mathbf r}
\def\Z{\mathbb Z} \def\vrr{\varrho} \def\ll{\lambda}
\def\L{\scr L}\def\Tt{\tt} \def\TT{\tt}\def\II{\mathbb I}
\def\i{{\rm in}}\def\Sect{{\rm Sect}}
\def\M{\scr M} \def\Q{\mathbb Q} \def\texto{\text{o}} \def\LL{\Lambda}
\def\Rank{{\rm Rank}} \def\B{\scr B} \def\i{{\rm i}} \def\HR{\hat{\R}^d}
\def\to{\rightarrow}\def\l{\ell}\def\iint{\int}
\def\EE{\scr E}\def\Cut{{\rm Cut}}\def\W{\mathbb W}
\def\A{\scr A} \def\Lip{{\rm Lip}}\def\S{\mathbb S}
\def\BB{\mathbb B}\def\Ent{{\rm Ent}} \def\i{{\rm i}}\def\itparallel{{\it\parallel}}
\def\g{{\mathbf g}}\def\Sect{{\mathcal Sec}}\def\T{\mathcal T}\def\V{{\bf V}}
\def\PP{{\bf P}}\def\HL{{\bf L}}\def\Id{{\rm Id}}\def\f{{\bf f}}\def\cut{{\rm cut}}
\def\Ss{\mathbb S}
\def\BL{\scr A}\def\Pp{\mathbb P}\def\Pp{\mathbb P} \def\Ee{\mathbb E}

\maketitle

\begin{abstract}
Fix an irrational number $\alpha$. Let $X_1,X_2,\cdots$ be independent, identically distributed, integer-valued random variables with characteristic function $\varphi$, and let $S_n=\sum_{i=1}^n X_i$ be the partial sums. Consider the random walk $\{S_n \alpha\}_{n\ge 1}$ on the torus, where $\{\cdot\}$ denotes the fractional part. We study the long time asymptotic behaviour of the empirical measure of this random walk to the uniform distribution under the general $p$-Wasserstein distance. Our results show that the Wasserstein convergence rate depends on the Diophantine properties of $\alpha$ and the H\"older continuity of the characteristic function $\varphi$ at the origin, and there is an interesting critical phenomenon that will occur. The proof is based on the PDE approach developed by L. Ambrosio, F. Stra and D. Trevisan in \cite{AST} and the continued fraction representation  of the irrational number $\alpha$.
 \end{abstract} \noindent
 AMS subject Classification (2020):\  60G50, 60B10, 11J70, 42A05.   \\
\noindent
 Keywords:  Empirical measure, Diophantine approximation, Random walk, Wasserstein distance, Continued fractions.
 \vskip 2cm

\section{Introduction and Main Results}

The circle group $\mathbb{T}$ is defined by $\R / \Z$, which is naturally identified with the interval $[0, 1)$.
 Let $\alpha$ be an irrational number. Throughout this paper, for $x \in \R$ we write
\begin{align*}
\lfloor x \rfloor : = \max_{n \in \Z, \, n \leq x} n, \quad \{ x \} : = x - \lfloor x \rfloor, \quad \| x \| : = \min_{n \in \Z} |x - n| = \min \{\{ x \}, 1 - \{ x \}  \}.
\end{align*}
The famous Weyl's criterion \cite{Weyl} implies that the Kronecker sequence $\big\{ \{j \alpha\} ; j \in \N \big\} \subset \mathbb{T}$ is uniformly distributed, namely
\begin{align} \label{uniform distributed}
\lim_{n \to \infty} \frac{1}{n} \sum_{j = 1}^{n} \mathbbm{1}_{[a, b)} (\{ j \alpha \}) = b - a
\end{align}
holds for every $0 \leq a < b \leq 1$, where $\mathbbm{1}_{[a, b)}$ is the indicator function of the interval $[a, b)$. Indeed, \eqref{uniform distributed} is equivalent to the fact that the empirical measures $\frac1n \sum_{j = 1}^{n} \delta_{\{ j \alpha \}}$ weakly converge to the uniform distribution on $\mathbb{T}$, denoted by $\mu$. Let $\mathscr{P}(\mathbb{T})$ be the set of all Borel probability measures on $\mathbb{T}$. It has been a notable question of interest in probability, number theory and dynamical system to explore the rate of this convergence in terms of specific distances defined on $\mathscr{P}(\mathbb{T})$. A natural distance considered is called the discrepancy. The discrepancy between $\nu_0, \nu_1 \in \mathscr{P}(\mathbb{T})$ is defined by
\begin{align*}
D(\nu_0, \nu_1) : = \sup_{0 \leq a < b \leq 1} \big| \nu_0([a, b)) - \nu_1 ([a, b)) \big|;
\end{align*}
and for any sequence $( x_j )_{j \in \N} \subset \mathbb{T}$, we define for $n \in \N$,
\begin{align*}
D_n(x_j) : = D \bigg( \frac1n \sum_{j = 1}^{n} \delta_{x_j}, \mu \bigg).
\end{align*}
Another class of distances is the Wasserstein distances. Note that the circle group $\mathbb{T}$ is equipped with the distance given by $\| x - y \|$ for every $x, y \in [0, 1)$. Then for $p \geq 1$, the $L^p$-Wasserstein distance (cf. \cite{V}) between $\nu_0, \nu_1 \in \mathscr{P}(\mathbb{T})$ is defined by
$$
\W_p(\nu_0, \nu_1) := \bigg( \inf_{\varpi} \int_{\mathbb{T} \times \mathbb{T}} \| x - y \|^p \, \d \varpi (x, y)  \bigg)^{\frac1p},
$$
where the infimum is taken over all couplings $\varpi$ of $\nu_0$ and $\nu_1$. Some connections between the discrepancy and the Wasserstein distance have been discussed in \cite{BB2, Graham}.

For an irrational number $\alpha$, the type $\gamma$ of $\alpha$ (cf. \cite{KN}) is defined by
\begin{align*}
\gamma : = \sup \big\{ \eta \, ; \, \liminf_{q \to \infty} q^\eta \| q \alpha \| = 0   \big\} = \inf \big\{ \eta \, ; \, \inf_{q \in \Z^+} q^\eta \| q \alpha \| > 0  \big\}.
\end{align*}
This quantity reflects the extent to which $\alpha$ can be approximated by rationals, and according to Dirichlet's theorem, $\gamma \geq 1$. If $\gamma$ is finite, then for any $\varepsilon > 0$, there exists a constant $C = C(\alpha, \varepsilon)>0$ such that
$$
\| q \alpha \| \geq C q^{-\gamma - \varepsilon}
$$
holds for any $q \in \N$; and
$$
\| q \alpha \| \leq  q^{-\gamma + \varepsilon}
$$
holds for infinitely many $q \in \N$. As stated in the celebrated Roth's theorem (cf. \cite{S91}), all irrational algebraic numbers have type $1$. The order of decay in $D_n(\{ j \alpha \})$ for $\alpha$ of finite type is presented in \cite[Chapter 2]{KN}.
On the other hand, recent developments on the order of $\W_p(\frac1n \sum_{j = 1}^{n} \delta_{\{ j \alpha \}}, \mu)$ for badly approximated $\alpha$ (to be defined below), as well as generalizations in higher dimensions, can be found in \cite{BS} and the references therein.

In this paper, we investigate the corresponding problem for random subsequences of $\{ j \alpha \}$, which are generated by random walks on $\Z$. Let $X_1, X_2, \ldots$ be independent, identically distributed, integer-valued random variables, and let $S_j = \sum_{i = 1}^{j} X_i$ be the partial sums. Then $\big\{   \{S_j \alpha\} ; j \in \N \big\}$ gives rise to a random walk on $\mathbb{T}$ starting at the origin with a countably-infinite state space. This type of random walk and its variants have been extensively studied in the literature, including \cite{S84, S98, S99, W04, B18, BB, BB2} to cite a few. The nearly optimal order of decay of the discrepancy $D_n(\{ S_j \alpha \})$ has been determined in \cite{BB} under certain assumptions on $X_1$ and $\alpha$. The main purpose of this paper is to establish the upper and lower bounds of $\E \big[ \W_p(\mu_n, \mu) \big]$ within similar contexts, where $\mu_n := \frac1n \sum_{j = 1}^{n} \delta_{\{ S_j \alpha \}}$ is the associated empirical measure. For the upper bound part, we have the following statement.

\begin{thm}\label{TH1}
Let $X_1,X_2,\ldots$ be integer-valued i.i.d. random variables with characteristic function $\varphi$, and let $S_j = \sum_{i = 1}^{j} X_i$. Let $\alpha$ be an irrational number such that
$$\| q \alpha \| \geq C q^{-\gamma}$$
holds for any $q \in \N$ with some constants $\gamma \geq 1$ and $C > 0$. Suppose that
\begin{equation}\label{Con}
|1-\varphi(x)| \ge c|  x |^{\beta}.
\end{equation}
holds in an open neighborhood of $0$ with constants $c > 0$ and $0< \beta \le 2$. Set $\mu_n = \frac{1}{n} \sum_{j = 1}^{n} \delta_{\{ S_j \alpha \}}$, $n \geq 1$. Then for $1 \leq p < \infty$,
\begin{equation*}
\E \big[\W_p(\mu_n,\mu) \big] =
\begin{cases}
O \big( \frac{1}{\sqrt{n}} \big) , & \textrm{if } \beta \gamma < 2;\\
O\big( \frac{(\log n)^{1 - \frac{1}{p \vee 2}}}{\sqrt{n}}  \big), & \textrm{if } \beta \gamma =2;\\
O\big( \frac{1}{n^{1/(\beta \gamma)}} \big),& \textrm{if } \beta \gamma >2,
\end{cases}
\end{equation*}
where the implied constants depend on $p$ and conditions on $\alpha$ and $\varphi$.
\end{thm}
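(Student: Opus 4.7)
The approach combines the Fourier/PDE technique of Ambrosio--Stra--Trevisan \cite{AST} with the continued-fraction structure of $\alpha$. The starting point is the Fourier expansion of $\mu_n-\mu$ on $\TO$, with coefficients
\[
\widehat c_k \;:=\; \frac{1}{n}\sum_{j=1}^{n} e^{-2\pi\i k S_j \alpha}, \qquad k\in\Z\setminus\{0\}.
\]
For $p=2$, the AST/Peyr\'e-type duality (essentially the $H^{-1}$/Benamou--Brenier bound, whose use for singular empirical measures is justified by the AST smoothing/PDE argument) gives $\W_2(\mu_n,\mu)^2 \le C\sum_{k\ne 0}|\widehat c_k|^2/k^2$. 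For $p\in[1,2]$ we then use $\W_p\le \W_2$. For $p>2$ we instead start from the 1D formula $\W_p(\mu_n,\mu)^p \le C\int_\TO |F_n(x)-x|^p\,\d x$ (valid after an appropriate choice of basepoint on the circle), where $F_n$ is the empirical CDF of $\mu_n$.

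The second step is to estimate $\E|\widehat c_k|^2$. By the i.i.d.\ structure,
\[
\E|\widehat c_k|^2 \;=\; \frac{1}{n^2}\sum_{i,j=1}^{n}\E\,e^{2\pi\i k\alpha(S_j-S_i)} \;=\; \frac{1}{n^2}\Big(n+2\Re\sum_{m=1}^{n-1}(n-m)\,\varphi(2\pi k\alpha)^m\Big),
\]
and summing the geometric series in $m$ yields
\[
\E|\widehat c_k|^2 \;\le\; C\,\min\!\Big(1,\;\tfrac{1}{n\,|1-\varphi(2\pi k\alpha)|}\Big).
\]
Since $X_1$ is integer-valued, $\varphi$ has period $2\pi$, so $|1-\varphi(2\pi k\alpha)|$ depends on $k\alpha$ only through $\|k\alpha\|$. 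Combining with the H\"older hypothesis \eqref{Con} this gives $|1-\varphi(2\pi k\alpha)|\ge c'\|k\alpha\|^{\beta}$ when $\|k\alpha\|$ is below a fixed threshold, and is bounded below by a positive constant otherwise.

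Inserting this into the $p=2$ bound produces
\[
\E\,\W_2(\mu_n,\mu)^2 \;\le\; C\sum_{k\ne 0}\frac{1}{k^2}\min\!\Big(1,\;\frac{1}{n\|k\alpha\|^\beta}\Big).
\]
To evaluate this sum sharply we exploit the continued-fraction expansion of $\alpha$: partition $\N$ into the blocks $[q_m,q_{m+1})$ given by the denominators of the principal convergents $p_m/q_m$, and use the three-distance theorem together with $\|q_m\alpha\|\asymp 1/q_{m+1}$ to estimate $\sum_{q_m\le k<q_{m+1}} k^{-2}\|k\alpha\|^{-\beta}$ block by block. The Diophantine hypothesis $\|k\alpha\|\ge Ck^{-\gamma}$ controls how quickly $q_{m+1}$ can grow in terms of $q_m$, and thus identifies the dominant blocks. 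Reassembling, the ``generic'' blocks (where $\|k\alpha\|\asymp 1$) contribute $O(1/n)$, while the ``near-convergent'' terms $k\approx q_m$ contribute $O(n^{-2/(\beta\gamma)})$; the first dominates when $\beta\gamma<2$, the second when $\beta\gamma>2$, and at the critical value $\beta\gamma=2$ the two balance and produce a logarithmic factor $\log n$.

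For $p>2$ we combine the $L^p$-reduction with the interpolation $\|f\|_{L^p}^p\le \|f\|_{L^2}^2\|f\|_{L^\infty}^{p-2}$ applied to $f=F_n-\mathrm{id}$: the $L^2$-norm has been handled by Step~3, and $\|F_n-\mathrm{id}\|_{L^\infty}$ is precisely the discrepancy $D_n(\{S_j\alpha\})$, controlled through an Erd\H{o}s--Tur\'{a}n/Koksma expansion applied to the same Fourier coefficients (so no new randomness enters). At the critical scale $\beta\gamma=2$ this interpolation accounts for the claimed exponent $1-1/(p\vee 2)$ of the logarithm. The principal technical obstacle will be the sharp, case-dependent evaluation of the weighted Fourier sum in the third step, and in particular pinning down the correct logarithmic exponent at $\beta\gamma=2$, which requires careful bookkeeping of the contributions of the partial quotients of $\alpha$.
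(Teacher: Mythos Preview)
Your plan for $p\le 2$ is essentially the paper's: bound $\W_2^2$ by $\sum_k k^{-2}\E|\widehat c_k|^2$, estimate $\E|\widehat c_k|^2\lesssim (n\,|1-\varphi(2\pi k\alpha)|)^{-1}$, and evaluate the resulting sum via the continued-fraction block decomposition. One genuine gap here: your claim that $|1-\varphi(2\pi k\alpha)|$ is ``bounded below by a positive constant'' whenever $\|k\alpha\|$ is not small is false in general. If the support of $X_1$ has $\gcd d>1$ then $\varphi(2\pi j/d)=1$ for every $j$, so $|1-\varphi(2\pi k\alpha)|$ can vanish to order $\beta$ near $\{k\alpha\}=j/d$ even though $\|k\alpha\|$ is bounded away from $0$. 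The paper's Proposition~\ref{equivalent} shows the correct global statement is $|1-\varphi(2\pi x)|\ge c'\|dx\|^\beta$, after which the continued-fraction argument is run for $d\alpha$ rather than $\alpha$.

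For $p>2$ your route diverges from the paper's and has a real obstacle. The paper does \emph{not} interpolate with the discrepancy: it smooths $\mu_n$ by the heat semigroup, uses Lemma~\ref{Lemma2.0} to write $\W_p(P_\varepsilon\mu_n,\mu)$ as an $L^p$-norm of the antiderivative, and then applies Hausdorff--Young to pass directly to the $\ell^{p/(p-1)}$-norm of the damped Fourier coefficients (Lemma~\ref{Lemma2.1}). This yields a single weighted sum $\sum_m \e^{-m^2\varepsilon}|m|^{-p/(p-1)}\big(\E|\widehat{\mu_n}(m)|^2\big)^{p/(2(p-1))}$ whose block analysis via Lemma~\ref{Lem2.2} produces the exponent $1-1/p$ of $\log n$ cleanly at $\beta\gamma=2$. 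Your interpolation $\|F_n\|_p^p\le\|F_n\|_2^2\|F_n\|_\infty^{p-2}$ instead asks you to control $\E\big[\|F_n\|_2^2\,\|F_n\|_\infty^{p-2}\big]$, a product inside the expectation; separate bounds on the two factors are not enough, and you would need higher-moment control of both $\W_2$ and $D_n$. Moreover, the Erd\H{o}s--Tur\'an bound for $\E D_n$ leads to $\sum_{k\le N}k^{-1}\|kd\alpha\|^{-\beta/2}$, which lies outside the range $\theta>1$ of the block lemma (Lemma~\ref{Lem2.2}) and picks up an extra logarithm from $\sum_{a\le a_m}a^{-1}$ in each block; so even formally your interpolation does not recover the sharp exponent $1-1/p$ at criticality.
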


As for the lower bound part, we have the following theorems, which apply to the cases $\beta \gamma \leq 2$ and $\beta \gamma > 2$ respectively.

\begin{thm}\label{TH2}
Let $X_1,X_2,\ldots$ be integer-valued i.i.d. non-constant random variables, and let $S_j = \sum_{i = 1}^{j} X_i$. Let $\aa$ be an irrational number and set $\mu_n = \frac{1}{n} \sum_{j = 1}^{n} \delta_{\{ S_j \alpha \}}$, $n \geq 1$. Then there exists a constant $c > 0$ such that for any $n \geq 1$,
\begin{equation*}
\E\left[\W_1(\mu_n,\mu)\right] \ge \frac{c}{\sqrt{n}}.
\end{equation*}
\end{thm}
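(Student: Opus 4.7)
The plan is to apply Kantorovich--Rubinstein duality against a single Fourier mode. Since the functions $x\mapsto(2\pi)^{-1}\cos(2\pi x)$ and $x\mapsto(2\pi)^{-1}\sin(2\pi x)$ are $1$-Lipschitz on $\mathbb T$ with zero integral against $\mu$, combining them gives
\begin{equation*}
\W_1(\mu_n,\mu)\ \ge\ \frac{1}{2\sqrt{2}\,\pi}\,|A_n|,\qquad
A_n:=\frac1n\sum_{j=1}^n W_j,\qquad W_j:=e^{\i 2\pi\alpha S_j},
\end{equation*}
so it suffices to prove $\E|A_n|\ge c/\sqrt{n}$ uniformly in $n\ge 1$. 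Writing $W_j=\prod_{i=1}^j Y_i$ with the i.i.d.\ unit-modulus random variables $Y_i:=e^{\i 2\pi\alpha X_i}$, I set $\rho:=\E Y_1=\varphi(2\pi\alpha)$. The crucial preliminary observation is that $|\rho|<1$: if $|\rho|=1$, then $Y_1$ would be almost surely constant, but the map $n\mapsto e^{\i 2\pi\alpha n}$ is injective on $\Z$ because $\alpha\notin\Q$, forcing $X_1$ itself to be almost surely constant and contradicting the hypothesis.

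Next I compare $T_n:=nA_n$ with the complex-valued martingale $N_n:=\sum_{j=1}^n M_j$ whose differences $M_j:=W_j-\rho W_{j-1}=W_{j-1}(Y_j-\rho)$ are adapted to $\F_j:=\sigma(Y_1,\ldots,Y_j)$. Using $W_0=1$, a direct summation yields the identity
\begin{equation*}
(1-\rho)\,T_n\ =\ N_n\ +\ \rho\,(1-W_n),
\end{equation*}
so by the triangle inequality $|T_n|\ge(|N_n|-2|\rho|)/|1-\rho|$, reducing the task to lower-bounding $\E|N_n|$. Since $|W_{j-1}|=1$ and $Y_j$ is independent of $\F_{j-1}$, the conditional variance is deterministic: $\E[|M_j|^2\mid\F_{j-1}]=\E|Y_j-\rho|^2=1-|\rho|^2$, and martingale orthogonality delivers the exact second moment $\E|N_n|^2=n(1-|\rho|^2)$.

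For the fourth moment I use $|M_j|\le 2$ together with Burkholder--Davis--Gundy applied to the real and imaginary parts of $N_n$, obtaining $\E|N_n|^4\le C_1\,\E\bigl(\sum_{j=1}^n|M_j|^2\bigr)^2\le C_2\,n^2$. Inserting these estimates into H\"older's inequality $\E Z^2\le(\E Z)^{2/3}(\E Z^4)^{1/3}$ with $Z=|N_n|$ yields
\begin{equation*}
\E|N_n|\ \ge\ \frac{(\E|N_n|^2)^{3/2}}{(\E|N_n|^4)^{1/2}}\ \ge\ c_0\sqrt{n},
\end{equation*}
with $c_0>0$ depending only on $|\rho|$. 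This gives $\E|T_n|\ge c_1\sqrt{n}$ for every $n\ge n_0(\rho)$, and the finitely many smaller indices are absorbed into the overall constant since $\E|A_n|>0$ for every $n\ge 1$ (the explicit formula $\E|T_n|^2=n+2\sum_{\ell=1}^{n-1}(n-\ell)\mathrm{Re}\,\rho^\ell$ is strictly positive whenever $\alpha$ is irrational and $X_1$ is non-constant). The main obstacle of the argument is really the step $|\rho|<1$, which is exactly where the integer-valued, non-constant, irrational-angle hypotheses enter; apart from that the bound is a routine Paley--Zygmund-style estimate on the bounded complex martingale $N_n$.
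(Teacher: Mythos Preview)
Your argument is correct and follows the same overall strategy as the paper: reduce via Kantorovich duality to a lower bound on $\E\big|\sum_{j=1}^n \e^{2\pi \i S_j\alpha}\big|$, then apply the H\"older inequality $\E|Z| \ge (\E|Z|^2)^{3/2}/(\E|Z|^4)^{1/2}$ together with the bounds $\E|T_n|^2 \asymp n$ and $\E|T_n|^4 = O(n^2)$.

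The difference lies only in how the moment bounds are obtained. The paper quotes \cite[Proposition~2.2]{BB1}, which gives precise asymptotics for all even moments of the random exponential sum and in particular yields the required second- and fourth-moment estimates. You instead derive what is needed directly via the martingale decomposition $(1-\rho)T_n = N_n + \rho(1-W_n)$: the second moment of $N_n$ is computed exactly by orthogonality of the martingale differences, and the fourth moment is bounded by Burkholder--Davis--Gundy using $|M_j|\le 2$. This makes your proof entirely self-contained, at the small cost of an $O(1)$ remainder that forces you to treat finitely many small indices separately. One minor remark: the claim that $\E|T_n|^2>0$ for every $n$ is true but not transparent from the formula you wrote; the cleanest justification is that with positive probability all the $X_i$ take a common value $v$, in which case $T_n$ equals $n$ (if $v=0$) or a nontrivial geometric sum (if $v\ne 0$, using $\alpha\notin\Q$), hence is nonzero.
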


\begin{thm} \label{lower final}
Let $X_1, X_2, \ldots$ be integer-valued i.i.d. random variables with characteristic function $\varphi$, and let $S_j = \sum_{i = 1}^{j} X_i$. Let $\alpha$ be an irrational number such that
$$\| q \alpha \| \leq C q^{-\gamma}$$
holds for infinitely many $q \in \N$ with some constants $\gamma \ge 1$ and $C > 0$. Suppose that
$$|1 - \varphi(x)| \leq c |x|^\beta $$
holds for any $x \in \R$ with constants $c > 0$ and $0 < \beta \leq 2$. Set $\mu_n = \frac{1}{n} \sum_{j = 1}^{n} \delta_{\{ S_j \alpha \}}$, $n \geq 1$. Then there exists $C' > 0$ such that
$$
\E \big[ \W_1(\mu_n, \mu) \big] \geq \frac{C'}{n^{1/(\beta \gamma)}}
$$
holds for infinitely many $n \in \N$. In other words,
$$
\limsup_{n \to \infty} n^{1/(\beta \gamma)} \E \big[ \W_1(\mu_n, \mu) \big] > 0.
$$
\end{thm}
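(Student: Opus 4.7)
The plan is to bound $\E[\W_1(\mu_n,\mu)]$ from below by testing $\mu_n-\mu$ against a trigonometric function adapted to a Diophantine near-resonance of $\alpha$. For each integer $q\geq 1$, take the test function
\[
f_q(x):=\frac{\cos(2\pi q x)}{2\pi q},\qquad x\in\mathbb{T}.
\]
Since $|f_q'|\leq 1$ and $f_q$ is $(1/q)$-periodic, one checks $|f_q(x)-f_q(y)|\leq\|x-y\|$ on $\mathbb{T}$, and $\int_\mathbb{T}f_q\,\d\mu=0$. The Kantorovich--Rubinstein duality applied to both $f_q$ and $-f_q$ gives $\W_1(\mu_n,\mu)\geq\bigl|\int f_q\,\d(\mu_n-\mu)\bigr|$, and then $\E|\cdot|\geq|\E\cdot|$ yields
\[
\E[\W_1(\mu_n,\mu)]\;\geq\;\bigg|\E\int f_q\,\d(\mu_n-\mu)\bigg|\;=\;\frac{1}{2\pi q\, n}\bigg|\sum_{j=1}^{n}\mathrm{Re}\bigl(\varphi(2\pi q\alpha)^j\bigr)\bigg|,
\]
where I used $\E[e^{2\pi\mathrm{i} q S_j\alpha}]=\varphi(2\pi q\alpha)^j$.

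Next, let $(q_k)_{k\geq 1}$ be a sequence (necessarily with $q_k\to\infty$) along which $\|q_k\alpha\|\leq C q_k^{-\gamma}$. Since $\varphi$ is $2\pi$-periodic as the characteristic function of an integer-valued random variable, $\varphi(2\pi q_k\alpha)=\varphi(2\pi r_k)$ with $|r_k|=\|q_k\alpha\|\leq C q_k^{-\gamma}$, so the Hölder hypothesis yields
\[
|\eta_k|:=|1-\varphi(2\pi q_k\alpha)|\leq c(2\pi C)^{\beta}q_k^{-\beta\gamma}.
\]
From the telescoping identity $1-z^j=(1-z)\sum_{i=0}^{j-1}z^i$ together with $|\varphi|\leq 1$, one gets $|1-\varphi(2\pi q_k\alpha)^j|\leq j|\eta_k|$, hence $\mathrm{Re}(\varphi(2\pi q_k\alpha)^j)\geq 1-j|\eta_k|$. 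Setting
\[
n_k:=\Bigl\lfloor q_k^{\beta\gamma}/\bigl(2c(2\pi C)^{\beta}\bigr)\Bigr\rfloor,
\]
one has $j|\eta_k|\leq 1/2$ for every $1\leq j\leq n_k$, whence $\sum_{j=1}^{n_k}\mathrm{Re}(\varphi(2\pi q_k\alpha)^j)\geq n_k/2$. Substituting this into the previous display and using $n_k\asymp q_k^{\beta\gamma}$ gives
\[
\E[\W_1(\mu_{n_k},\mu)]\;\geq\;\frac{1}{4\pi q_k}\;\gtrsim\;n_k^{-1/(\beta\gamma)}.
\]
Since $n_k\to\infty$ as $q_k\to\infty$, this delivers the claimed lower bound along an infinite sequence of $n$'s.

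The main obstacle is the correct scale matching between $n$ and $q$. For a given Diophantine approximant $q_k$, taking $n$ too small loses a factor of $q_k^{-1}$ and gives a worse exponent, while taking $n$ too large allows the phases $\varphi(2\pi q_k\alpha)^j$ to drift away from $1$ and produces cancellation in the partial sum $\sum_j\mathrm{Re}(\varphi^j)$. The threshold $n_k\asymp q_k^{\beta\gamma}$ is exactly the scale at which both effects balance, and is the source of the exponent $1/(\beta\gamma)$; it is also consistent with the upper bound in Theorem~\ref{TH1} in the regime $\beta\gamma>2$, so the plan yields the sharp subsequential rate.
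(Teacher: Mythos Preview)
Your proof is correct and takes a genuinely different route from the paper's. The paper argues geometrically: on the event $\{M_n:=\max_{j\le n}|S_j|<C_1 n^{1/\beta}\}$, all points $\{S_j\alpha\}$ lie within distance $1/(3q)$ of the set $\{k/q:0\le k<q\}$, and a direct Kantorovich argument with a sum of tent functions then gives $\W_1(\mu_n,\mu)\ge 1/(36q)$ deterministically on that event; the probability of the event is bounded below using a maximal inequality borrowed from \cite{BB}. Your argument instead tests $\mu_n-\mu$ against the single $1$-Lipschitz function $f_q(x)=\cos(2\pi qx)/(2\pi q)$, passes the expectation inside via $\E|Y|\ge|\E Y|$, and exploits the identity $\E[e^{2\pi\i qS_j\alpha}]=\varphi(2\pi q\alpha)^j$ together with the $2\pi$-periodicity of $\varphi$ and the elementary bound $|1-z^j|\le j|1-z|$ for $|z|\le1$.

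Your approach is more elementary in that it avoids any appeal to the maximal inequality $\mathbb{P}(M_n<C_1n^{1/\beta})\ge C_2$, and it makes transparent that the exponent $1/(\beta\gamma)$ arises from balancing the decay $q^{-1}$ of the test function against the phase drift $n|1-\varphi(2\pi q\alpha)|\lesssim nq^{-\beta\gamma}$. The paper's approach, on the other hand, yields a pointwise lower bound on $\W_1(\mu_n,\mu)$ on an event of positive probability, which is a slightly stronger intermediate statement and feeds more directly into the path-wise lower bound of Corollary~\ref{path-wise'}; it also isolates a general lemma (Lemma~\ref{W1 lower-bound}) of independent interest about measures supported away from a union of arcs.
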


According to the upper bounds in Theorem \ref{TH1} and the lower bounds in Theorems \ref{TH2} and \ref{lower final}, we obtain the main theorem as follows.

\begin{thm} \label{two-sided}
Let $X_1,X_2,\ldots$ be integer-valued i.i.d. random variables with characteristic function $\varphi$, and let $S_j = \sum_{i = 1}^{j} X_i$. Let $\alpha$ be an irrational number. Suppose that
\begin{enumerate}[(i)]
\item there exists $0 < \beta \leq 2$ such that in an open neighborhood of $0$,\footnote{Throughout the paper $A \asymp B$ means that there exists an absolute constant $C > 0$ such that $C^{-1} B \leq A \leq C B$.}
$$
|1 - \varphi(x)| \asymp |x|^\beta;
$$

\item there exists $\gamma \geq 1$ such that
\begin{align} \label{wy}
0 < \liminf_{q \to \infty} q^\gamma \| q \alpha \| < \infty.
\end{align}
\end{enumerate}
Set $\mu_n = \frac{1}{n} \sum_{j = 1}^{n} \delta_{\{ S_j \alpha \}}$, $n \geq 1$. Then for any $1 \leq p < \infty$,
\begin{enumerate}[(1)]
\item if $\beta \gamma < 2$, assume furthermore that $X_1$ is non-constant, then

$$\E \big[  \W_p(\mu_n, \mu)  \big] \asymp \frac{1}{\sqrt{n}};$$

\item if $\beta \gamma = 2$, assume furthermore that $X_1$ is non-constant, then there exist $c_1, c_2 > 0$ such that for any $n \in \N$,

$$\frac{c_1}{\sqrt{n}} \leq \E \big[  \W_p(\mu_n, \mu)  \big] \leq \frac{c_2 (\log n)^{1 - \frac{1}{p \vee 2}} }{\sqrt{n}};$$

\item if $\beta \gamma > 2$, then

$$
0 < \limsup_{n \to \infty} n^{1/(\beta \gamma)} \E \big[ \W_p(\mu_n, \mu) \big] < \infty,
$$
\end{enumerate}
where all constants and implied ones depend on $p$ and conditions on $\varphi$ and $\alpha$.
\end{thm}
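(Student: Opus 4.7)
The plan is that Theorem \ref{two-sided} follows by direct combination of the three preceding theorems, so the proof reduces to two tasks: (a) verifying that hypotheses (i) and (ii) imply the hypotheses of Theorems \ref{TH1}, \ref{TH2} and \ref{lower final}, and (b) assembling the resulting upper and lower bounds into the three regimes $\beta\gamma < 2$, $\beta\gamma = 2$, and $\beta\gamma > 2$.

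First I would unpack assumption (i). The two-sided estimate $|1-\varphi(x)| \asymp |x|^\beta$ in a neighborhood of $0$ immediately gives the one-sided lower bound $|1-\varphi(x)| \geq c|x|^\beta$ near $0$ needed by Theorem \ref{TH1}. For the global upper bound $|1-\varphi(x)| \leq c'|x|^\beta$ on all of $\R$ required by Theorem \ref{lower final}, I would observe that outside any fixed neighborhood of $0$ the quantity $|x|^\beta$ is bounded away from $0$ while $|1-\varphi(x)| \leq 2$, so the local bound extends globally after enlarging $c'$. Next, assumption (ii) splits into two halves. From $\liminf_{q \to \infty} q^\gamma \|q\alpha\| > 0$ together with irrationality of $\alpha$ (which gives $\|q\alpha\| > 0$ for every $q$), one obtains a uniform bound $\|q\alpha\| \geq C q^{-\gamma}$ valid for \emph{all} $q \in \N$, as required by Theorem \ref{TH1}; the small values of $q$ are absorbed by shrinking $C$. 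From $\liminf_{q \to \infty} q^\gamma \|q\alpha\| < \infty$ one gets infinitely many $q$ with $\|q\alpha\| \leq C q^{-\gamma}$, as required by Theorem \ref{lower final}.

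With the hypotheses in place, the upper bounds in all three regimes come directly from Theorem \ref{TH1}. For the lower bounds I would invoke the elementary monotonicity $\W_p \geq \W_1$ for $p \geq 1$ (an immediate consequence of Jensen's inequality applied to $t \mapsto t^p$) to transfer the $\W_1$ lower bounds of Theorems \ref{TH2} and \ref{lower final} to $\W_p$. In cases (1) and (2), where $X_1$ is assumed non-constant, Theorem \ref{TH2} yields $\E[\W_p(\mu_n,\mu)] \geq c/\sqrt{n}$ for every $n$, which matches the upper bound of Theorem \ref{TH1} exactly when $\beta\gamma < 2$ and up to the factor $(\log n)^{1 - 1/(p\vee 2)}$ when $\beta\gamma = 2$. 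In case (3), Theorem \ref{lower final} yields $\limsup_{n} n^{1/(\beta\gamma)} \E[\W_p(\mu_n,\mu)] > 0$, matching the upper bound $O(n^{-1/(\beta\gamma)})$ from Theorem \ref{TH1}. Note the choice of lower-bound theorem is essential: Theorem \ref{TH2} dominates only when $\beta\gamma \leq 2$ (since $n^{-1/(\beta\gamma)} \ll n^{-1/2}$ otherwise), while Theorem \ref{lower final} dominates only when $\beta\gamma \geq 2$.

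Since every step reduces to quoting one of the three preceding theorems, there is no genuine obstacle in this proof; the only items meriting a line of justification are the global-to-local extension of the upper bound on $|1-\varphi|$ and the passage from the $\liminf$ condition to a bound valid for every $q$ rather than only for large $q$. Both are elementary and require no new ideas.
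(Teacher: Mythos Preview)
Your proposal is correct and follows exactly the paper's approach: the paper simply states that Theorem~\ref{two-sided} is obtained by combining the upper bounds of Theorem~\ref{TH1} with the lower bounds of Theorems~\ref{TH2} and~\ref{lower final}, without spelling out the hypothesis verification. Your explicit checks that (i)--(ii) imply the hypotheses of those three theorems (the global extension of the upper bound on $|1-\varphi|$, and the passage from the $\liminf$ condition to a uniform Diophantine bound) are exactly the small steps the paper leaves implicit.
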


Let $W(\gamma)$ denote the set of all irrational numbers satisfying \eqref{wy} with $\gamma \geq 1$. The elements of $W(\gamma)$ all have type $\gamma$. According to Jarn\'ik-Besicovitch theorem \cite{B34}, if $\gamma > 1$, $W(\gamma)$ has the Hausdorff dimension not exceeding $2/(\gamma + 1)$, and therefore is a Lebesgue null set. Nevertheless, based on the theory of continued fractions, each $W(\gamma)$ has the cardinality of the continuum (see Remark \ref{wy0} below).

Theorem \ref{two-sided} indicates that the convergence rate changes as $\beta \gamma$ passes the critical value $2$. In \cite{BB}, the same phenomenon has been pointed out in the almost sure asymptotics of $D_n (\{  S_j \alpha \})$. It can be observed that there is a gap between the upper and the lower bounds for the critical case $\beta \gamma = 2$ in Theorem \ref{two-sided}. However, via the so-called PDE approach, some sharper bounds can be achieved for specific $X_1$ and $\alpha$.
We say that an irrational number $\alpha$ is badly approximable if
\begin{align} \label{badly}
\liminf_{q \to \infty} q \| q \alpha \| > 0.
\end{align}
Equivalently, $\alpha$ is badly approximable if and only if its continued fraction (cf. Section \ref{S2.3} below) has bounded elements $(a_k)_{k \in \N}$. All the quadratic irrationals are badly approximable, and the set of all badly approximable numbers is a Lebesgue null set. For badly approximable $\alpha$, our result reads as follows.
\begin{thm} \label{special case}
Let $X_1, X_2, \ldots$ be integer-valued i.i.d. random variables such that $\E X_1 = 0$ and $0 < \E X_1^2 < \infty$. Let $\alpha$ be a badly approximable irrational number and set $\mu_n = \frac{1}{n} \sum_{j = 1}^{n} \delta_{\{ S_j \alpha \}}$, $n \geq 1$. Then for any $1 \leq p \leq 2$,
$$
\E \big[ \W_p (\mu_n, \mu) \big] \asymp \sqrt{ \frac{\log n}{n}}.
$$
\end{thm}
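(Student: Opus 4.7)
The upper bound is an immediate consequence of Theorem~\ref{TH1}. Under $\E X_1 = 0$ and $0 < \E X_1^2 < \infty$, a Taylor expansion of $\varphi$ at the origin gives $|1-\varphi(x)| \asymp x^2$ in a neighbourhood of $0$, so condition \eqref{Con} holds with $\beta = 2$. Since $\alpha$ is badly approximable, $\|q\alpha\| \gtrsim q^{-1}$, corresponding to $\gamma = 1$. Thus $\beta\gamma = 2$ is the critical case of Theorem~\ref{TH1}, and for $1 \le p \le 2$ we have $p \vee 2 = 2$, so the log-exponent is $1 - 1/(p \vee 2) = 1/2$ and Theorem~\ref{TH1} yields $\E[\W_p(\mu_n, \mu)] \le C\sqrt{\log n / n}$.

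For the lower bound, since $\W_p \ge \W_1$ for $p \ge 1$, it suffices to prove $\E[\W_1(\mu_n, \mu)] \gtrsim \sqrt{\log n / n}$. My plan is to exploit the cumulative distribution representation of $\W_1$ on the circle,
$$\W_1(\mu_n, \mu) = \inf_{c \in \R} \int_0^1 |G(t) - c| \, \d t, \qquad G(t) := F_{\mu_n}(t) - t,$$
where $F_{\mu_n}(t) = \frac{1}{n}\#\{j \le n : \{S_j\alpha\} \le t\}$, combined with the elementary interpolation $\|h\|_{L^1} \ge \|h\|_{L^2}^2/\|h\|_{L^\infty}$. A Parseval computation on $\mathbb{T}$ (using $\hat G(k) = \hat\mu_n(k)/(2\pi i k)$ for $k \ne 0$) provides the clean identity
$$\inf_{c \in \R}\int_0^1(G(t) - c)^2 \, \d t \;=\; \sum_{k \ne 0}\frac{|\hat\mu_n(k)|^2}{(2\pi k)^2} \;=:\; \|\mu_n - \mu\|_{H^{-1}}^2,$$
and the optimal $c^*$ satisfies $|c^*| \le \|G\|_\infty = D_n$ (the Kolmogorov discrepancy of the sample). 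Combined, this gives the pointwise lower bound
$$\W_1(\mu_n, \mu) \;\ge\; \frac{\|\mu_n - \mu\|_{H^{-1}}^2}{2\, D_n}.$$

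The expected numerator is handled via the Fourier variance
$$\E|\hat\mu_n(k)|^2 \;=\; \frac{1}{n^2}\sum_{i,i'=1}^n \varphi(2\pi k\alpha)^{|i-i'|} \;\asymp\; \frac{1}{n\|k\alpha\|^2} \quad \text{(fast-mixing regime)}.$$
Using $\|q_j\alpha\| \asymp 1/q_j$ for the convergent denominators of the badly approximable $\alpha$---which grow geometrically, so $\asymp \log n$ of them lie in $[1, \sqrt{n}\,]$---one obtains $\E\|\mu_n - \mu\|_{H^{-1}}^2 \asymp \log n/n$. The discrepancy bound $\E D_n^2 \lesssim \log n / n$ can be derived from an Erd\H{o}s--Tur\'an inequality using the same continued fraction structure, and a fourth-moment bound $\E\|\mu_n - \mu\|_{H^{-1}}^4 \lesssim (\log n/n)^2$ follows from a Wick-type expansion of cross-moments of $|\hat\mu_n(k)|^2$. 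Writing $X = \|\mu_n-\mu\|_{H^{-1}}^2$ and $Y = D_n$, the Cauchy--Schwarz identity $\E X = \E[\sqrt{X/Y}\cdot\sqrt{XY}] \le \sqrt{\E[X/Y]\,\E[XY]}$ combined with $\E[XY] \le \sqrt{\E X^2\,\E Y^2}$ then yields
$$\E[\W_1(\mu_n, \mu)] \;\ge\; \frac{(\E X)^2}{2\sqrt{\E X^2 \cdot \E Y^2}} \;\asymp\; \frac{(\log n / n)^2}{(\log n / n)^{3/2}} \;=\; \sqrt{\frac{\log n}{n}}.$$

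The main technical obstacle lies in the two concentration-type bounds: the fourth-moment control of $\|\mu_n - \mu\|_{H^{-1}}^2$ and the second-moment discrepancy bound $\E D_n^2 \lesssim \log n/n$. Both rely on computing cross-moments $\E[\hat\mu_n(k_1)\overline{\hat\mu_n(k_2)}\cdots]$ through the characteristic function $\varphi$ and carefully summing the resulting expressions over frequencies using the lacunary structure of the convergents $q_j$.
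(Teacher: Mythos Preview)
Your upper-bound argument is correct and identical to the paper's.

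For the lower bound, the paper takes a different route. It invokes the variational inequality from \cite[Lemma~2.1]{HMT},
\[
\E\big[\W_1(\mu_n,\mu)\big]\;\ge\;\frac{1}{M}\,\E\big\|\nabla(-\Delta)^{-1}(P_\varepsilon\mu_n-1)\big\|_2^2\;-\;\frac{C}{M^3}\,\E\big\|\nabla(-\Delta)^{-1}(P_\varepsilon\mu_n-1)\big\|_4^4,
\]
with heat smoothing at scale $\varepsilon=n^{-1/2}$. Both moments are computed explicitly in Fourier: the $L^2$ term is bounded below by $c\log n/n$ by summing over the convergent denominators $q_k\le n^{1/2}$ (much as you sketch), and the $L^4$ term is bounded above by $C(\log n)^2/n^2$ via a careful decomposition of the index set $\mathbb{I}_2$ and the recursive Lemma~\ref{estimate for FN} (this is the paper's Proposition~\ref{proposition1}). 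Choosing $M\asymp\sqrt{\log n/n}$ then gives the sharp lower bound.

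Your approach instead uses the pointwise interpolation $\W_1\ge\|\mu_n-\mu\|_{H^{-1}}^2/(2D_n)$, putting the $L^\infty$-type quantity $D_n$ in the denominator. The genuine gap is the claim $\E D_n^2\lesssim\log n/n$. The Erd\H{o}s--Tur\'an inequality followed by Cauchy--Schwarz on the Fourier side yields
\[
\E D_n^2\;\lesssim\;\frac{1}{N^2}+\frac{1}{n}\Bigl(\sum_{1\le k\le N}\frac{1}{k\|k\alpha\|}\Bigr)^2,
\]
and for badly approximable $\alpha$ the inner sum is $\asymp(\log N)^2$ (each block $[q_j,q_{j+1})$ contributes $\asymp\log q_j$, and there are $\asymp\log N$ blocks). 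Optimising gives only $\E D_n^2=O((\log n)^4/n)$, and your Cauchy--Schwarz chain is sensitive to this exponent: with $\E Y^2\asymp(\log n)^a/n$ the conclusion degrades to $\E\W_1\gtrsim(\log n)^{1-a/2}/\sqrt{n}$, which misses the target once $a>1$. The paper's replacement of $L^\infty$ by $L^4$ is precisely what makes the Fourier moment tractable at the correct order. Note also that your bound $\E X^2\lesssim(\log n/n)^2$ is not a free consequence of Proposition~\ref{proposition1}: that result needs the smoothing cutoff $\varepsilon\ge\kappa n^{-1/2}$ for the sums to close, whereas your $X$ is unsmoothed.
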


This exactly corresponds to the case $\beta = 2$ and $\gamma = 1$ in Theorem \ref{two-sided}. Moreover, we can also establish a path-wise lower bound for $\W_1(\mu_n, \mu)$ as follows.

\begin{cor} \label{path-wise}
Under the hypotheses of Theorem \ref{special case}, there exists $c > 0$ such that with probability $1$,
$$
\limsup_{n \to \infty} \sqrt{\frac{n}{\log n}}  \W_1(\mu_n, \mu) \geq c.
$$
\end{cor}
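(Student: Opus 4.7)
\textbf{Proof proposal for Corollary \ref{path-wise}.} The plan is to promote the in-expectation lower bound supplied by Theorem \ref{special case} to an almost-sure $\limsup$ bound, by combining a Paley-Zygmund second-moment argument with a Kolmogorov tail $0$-$1$ law. Set $Y_n := \sqrt{n/\log n}\,\W_1(\mu_n,\mu)$, so that Theorem \ref{special case} yields $\Ee Y_n \geq c_1$ uniformly in $n$ for some $c_1 > 0$.

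First I would show that $\limsup_n Y_n$ is a.s.\ equal to a deterministic constant. The key observation is that if we replace the first $m$ increments $(X_1,\ldots,X_m)$ by any other integers while leaving $X_{m+1},X_{m+2},\ldots$ fixed, then for $j>m$ the partial sums $S_j$ are shifted by the common constant $C := \sum_{i\leq m}(X_i' - X_i)$. Consequently the atoms $\{S_j\alpha\}$ with $j>m$ are translated by $C\alpha$ on $\TO$, and since $\mu$ is translation invariant $\W_1(\cdot,\mu)$ is unchanged by this translation. The remaining $m$ atoms perturb the empirical measure by at most $m/n$ in total variation, which by convexity of $\W_1$ on $\TO$ costs at most $O(m/n)$ in $\W_1(\mu_n,\mu)$. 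Therefore the modified version $Y_n'$ satisfies $|Y_n - Y_n'| = O(m/\sqrt{n\log n}) \to 0$, and $\limsup_n Y_n$ depends on no finite prefix of $(X_i)$. It is thus measurable with respect to the tail $\sigma$-algebra $\bigcap_m \sigma(X_{m+1},X_{m+2},\ldots)$, and Kolmogorov's $0$-$1$ law gives $\limsup_n Y_n = L$ a.s.\ for some $L \in [0,\infty]$.

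Next I would establish a uniform second-moment estimate $\Ee[Y_n^2] \leq C$, equivalently $\Ee[\W_1(\mu_n,\mu)^2] \leq C\log n/n$. Since $\W_1\leq\W_2$ it suffices to bound $\Ee[\W_2(\mu_n,\mu)^2]$, which is exactly the quantity produced by the PDE approach of \cite{AST} driving the upper bound in Theorem \ref{special case}: one dominates $\W_2^2$ pointwise by a Dirichlet-type integral and estimates its expectation via Parseval together with the bounded-partial-quotient property of $\alpha$. Granted this bound, Paley-Zygmund gives
$$
\Pp\bigl(Y_n \geq c_1/2\bigr) \;\geq\; \Pp\bigl(Y_n \geq \Ee Y_n/2\bigr) \;\geq\; \frac14\,\frac{(\Ee Y_n)^2}{\Ee[Y_n^2]} \;\geq\; \frac{c_1^2}{4C} \;=:\; p_0 \;>\; 0
$$
for every $n$. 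Applying reverse Fatou to the events $A_n := \{Y_n \geq c_1/2\}$ yields $\Pp(\limsup_n A_n) \geq \limsup_n \Pp(A_n) \geq p_0$, so $\limsup_n Y_n \geq c_1/2$ with probability at least $p_0 > 0$; combined with the $0$-$1$ law above, this forces $L \geq c_1/2$ a.s., which is the corollary with $c = c_1/2$.

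The main obstacle is the second-moment bound in the previous paragraph: Theorem \ref{special case} is stated only for first moments of $\W_p$, so the required control on $\Ee[\W_2^2]$ at the rate $\log n/n$ has to be extracted from inside its proof rather than cited from its statement. Once this variance-level bound is in hand, the remaining ingredients---translation invariance of $\W_1(\cdot,\mu)$ on $\TO$, the tail-measurability argument, and a single application of Paley-Zygmund---are quite robust and should go through without surprises.
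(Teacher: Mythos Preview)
Your proposal is correct and follows the same strategy as the paper (Paley--Zygmund plus a zero--one law); the paper invokes Hewitt--Savage rather than Kolmogorov, which sidesteps your translation-invariance step since finite permutations leave $S_j$ unchanged for large $j$, but your tail-measurability argument is also valid. The second-moment bound you flag as the main obstacle is recorded separately in the paper as Proposition~\ref{proposition2} (giving $\E[\W_2^2(\mu_n,\mu)]=O((\log n)/n)$ when $\beta\gamma=2$), so it can be cited directly rather than extracted from the proof of Theorem~\ref{special case}.
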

Since $D_n( \{S_j \alpha \} ) \geq \W_1(\mu_n, \mu)$, this result slightly improves the lower bound in \cite{BB}. And by the same argument, we have the following path-wise lower bound for the case $\beta \gamma >2$.

\begin{cor} \label{path-wise'}
Under the hypotheses of Theorem \ref{two-sided}, if $\beta \gamma > 2$, then there exists $c > 0$ such that with probability $1$,
$$
\limsup_{n \to \infty} n^{1/(\beta \gamma)} \W_1(\mu_n, \mu) \geq c.
$$
\end{cor}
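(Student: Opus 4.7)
The plan, mirroring the proof of Corollary~\ref{path-wise}, is to promote the in-expectation lower bound of Theorem~\ref{lower final} to an almost-sure $\limsup$ by testing against a single Fourier character and then combining Kolmogorov's zero-one law with the reverse Fatou lemma.

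By hypothesis~(ii) of Theorem~\ref{two-sided} there exists an increasing sequence $(q_k)_{k\geq 1}\subset\N$ with $\|q_k\alpha\|\leq Cq_k^{-\gamma}$. Fix a small $\varepsilon>0$ (to be chosen later), set $n_k=\lfloor\varepsilon q_k^{\beta\gamma}\rfloor$ so that $q_k\asymp n_k^{1/(\beta\gamma)}$, and define
\[
Z_k \,:=\, \frac{1}{n_k}\sum_{j=1}^{n_k} e^{2\pi\mathrm{i}q_k S_j\alpha}, \qquad \phi_k\,:=\,\varphi(2\pi q_k\alpha).
\]
Since $\varphi$ is $2\pi$-periodic and $|1-\varphi(x)|\leq c|x|^\beta$ near $0$, one obtains $|1-\phi_k|\leq c_1 q_k^{-\beta\gamma}$, hence $|\phi_k^j-1|\leq j|1-\phi_k|\leq C_1\varepsilon$ for all $j\leq n_k$; consequently
\[
\E|Z_k|\,\geq\,|\E Z_k|\,=\,\bigg|\frac{1}{n_k}\sum_{j=1}^{n_k}\phi_k^j\bigg|\,\geq\,1-C_1\varepsilon.
\]
Kantorovich--Rubinstein duality, applied to the mean-zero Lipschitz function $x\mapsto\cos(2\pi q_k x-\arg Z_k)$ of Lipschitz constant $2\pi q_k$, yields $\W_1(\mu_{n_k},\mu)\geq|Z_k|/(2\pi q_k)$.

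The key step is to observe that $\limsup_k|Z_k|$ is measurable with respect to the tail $\sigma$-field of $(X_i)_{i\geq 1}$. Indeed, replacing $X_1,\ldots,X_m$ by any other integers shifts $S_j$ by a common integer $c$ for every $j\geq m$, so that $e^{2\pi\mathrm{i}q_kS_j\alpha}$ gets multiplied by the unit $e^{2\pi\mathrm{i}q_k c\alpha}$ for those indices, while only $m-1$ of the $n_k$ summands are otherwise altered. Hence the modified $Z_k^{\mathrm{new}}$ satisfies $\big||Z_k^{\mathrm{new}}|-|Z_k|\big|\leq 2(m-1)/n_k\to 0$, so $\limsup_k|Z_k|$ is invariant under such modifications. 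Kolmogorov's zero-one law then forces it to equal an almost-sure constant $L$, and since $|Z_k|\leq 1$ the reverse Fatou lemma gives $L=\E L\geq\limsup_k\E|Z_k|\geq 1-C_1\varepsilon$. Combining this with the Wasserstein bound and $n_k^{1/(\beta\gamma)}/q_k\to\varepsilon^{1/(\beta\gamma)}$ yields, after choosing $\varepsilon$ sufficiently small,
\[
\limsup_{n\to\infty} n^{1/(\beta\gamma)}\W_1(\mu_n,\mu)\,\geq\,\limsup_k\frac{n_k^{1/(\beta\gamma)}|Z_k|}{2\pi q_k}\,\geq\, c\quad\text{a.s.}
\]

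The principal technical point is the tail-measurability of $\limsup_k|Z_k|$: it rests crucially on the fact that shifts of the starting position of the walk merely multiply each Fourier summand by a common unit, leaving $|Z_k|$ invariant up to a boundary error $O(m/n_k)$ that vanishes along the subsequence. The remaining ingredients are already present in the proof of Theorem~\ref{lower final}, so assembling them requires only the bookkeeping indicated above.
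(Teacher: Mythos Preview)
Your argument is correct, but it takes a genuinely different route from the paper's. The paper derives Corollary~\ref{path-wise'} by combining the first-moment lower bound of Theorem~\ref{lower final} with the second-moment upper bound $\E[\W_2^2(\mu_n,\mu)]=O(n^{-2/(\beta\gamma)})$ from Proposition~\ref{proposition2}, applying the Paley--Zygmund inequality to obtain $\mathbb P\big(n^{1/(\beta\gamma)}\W_1(\mu_n,\mu)\ge c\big)\ge c'>0$ along a subsequence, and then invoking Fatou together with the Hewitt--Savage zero-one law (via the permutation-invariance lemma in Section~5.4). Your proof bypasses both the second-moment bound and the geometric lower bound of Lemmas~\ref{W1 lower-bound}--\ref{W1 lower-principle}: you work directly with the single Fourier coefficient $Z_k=\widehat{\mu_{n_k}}(-q_k)$, use the trivial bound $|Z_k|\le 1$ in place of a variance estimate, and exploit the special structure that modifying finitely many $X_i$'s multiplies the tail of the exponential sum by a unimodular factor, so that $\limsup_k|Z_k|$ is tail-measurable and Kolmogorov's (rather than Hewitt--Savage's) zero-one law applies. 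This is more elementary and self-contained for the present corollary; the paper's approach, on the other hand, treats $\W_1(\mu_n,\mu)$ itself as the random variable and therefore reuses wholesale the machinery already developed for Theorems~\ref{lower final} and~\ref{special case}. A minor remark: your opening line says you are ``mirroring the proof of Corollary~\ref{path-wise}'', but in fact you are not---neither Paley--Zygmund nor Hewitt--Savage appears in your argument.
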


To cover the case $\beta \gamma \leq 2$, we present the following result, which follows from the law of the iterated logarithm for random exponential sums (see \cite{BB1}) and the Kantorovich duality.

\begin{cor} \label{path-wise''}
Under the hypotheses of Theorem \ref{TH2}, there exists $c > 0$ such that with probability $1$,
$$
\limsup_{n \to \infty} \sqrt{\frac{n}{\log \log n}}  \W_1(\mu_n, \mu) \geq c.
$$
\end{cor}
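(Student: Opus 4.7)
The plan is to combine the Kantorovich--Rubinstein dual representation of $\W_1$ with the law of the iterated logarithm (LIL) for random exponential sums along $(S_j\alpha)$ established in \cite{BB1}. The idea is to select a single smooth $1$-Lipschitz test function on $\mathbb T$ whose integral against $\mu_n-\mu$ reduces exactly to such an exponential sum, so that the LIL transfers almost-sure fluctuations of the sum into an almost-sure lower bound for $\W_1(\mu_n,\mu)$.

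Concretely, I would take the $1$-Lipschitz function $f(x)=\tfrac{1}{2\pi}\sin(2\pi x)$ on $\mathbb T$, which has zero mean against $\mu$. By Kantorovich--Rubinstein duality,
$$
\W_1(\mu_n,\mu) \;\ge\; \Bigl|\int_{\mathbb T} f\,\d(\mu_n-\mu)\Bigr| \;=\; \frac{1}{2\pi n}\Bigl|\sum_{j=1}^{n}\sin(2\pi S_j\alpha)\Bigr|,
$$
where the $2\pi$-periodicity of $\sin$ allows replacing $\{S_j\alpha\}$ by $S_j\alpha$. Since $X_1$ is integer-valued and non-constant while $\alpha$ is irrational, one has $|\varphi(2\pi\alpha)|<1$: indeed, $|\varphi(2\pi\alpha)|=1$ would force $\alpha(X_1-X_1')\in\Z$ a.s.\ for an independent copy $X_1'$, contradicting $\Pp(X_1\neq X_1')>0$ together with the irrationality of $\alpha$. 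This spectral gap is precisely the input under which \cite{BB1} provides an LIL of the form
$$
\limsup_{n\to\infty}\frac{\sum_{j=1}^{n}\sin(2\pi S_j\alpha)}{\sqrt{2n\log\log n}} \;=\; \sigma \qquad \text{a.s.},
$$
for some deterministic $\sigma>0$ depending on $X_1$ and $\alpha$. Combining the two displays gives
$\limsup_{n\to\infty}\sqrt{n/\log\log n}\,\W_1(\mu_n,\mu)\;\ge\;\sigma/(\pi\sqrt{2})$ almost surely, which is the desired conclusion with $c=\sigma/(\pi\sqrt 2)$.

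The main obstacle is to import the precise version of the LIL in \cite{BB1} and to check that the associated variance constant is strictly positive in our setting; this in turn reduces to the elementary spectral-gap inequality $|\varphi(2\pi\alpha)|<1$ argued above, together with a standard computation of the Cov of $e^{2\pi i S_j\alpha}$ (which contributes a geometric series in $|\varphi(2\pi\alpha)|$). Once this is in hand, the remainder of the argument is immediate; notice in particular that no Diophantine property of $\alpha$ enters, reflecting the purely probabilistic nature of this path-wise lower bound as opposed to the number-theoretic character of the matching upper bounds in Theorem \ref{TH1}.
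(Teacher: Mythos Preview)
Your proposal is correct and matches the paper's own argument, given in Remark~\ref{remark4.1}: both combine Kantorovich duality with the law of the iterated logarithm for random exponential sums from \cite{BB1}. The only cosmetic difference is that the paper uses both $\tfrac{1}{2\pi}\cos(2\pi x)$ and $\tfrac{1}{2\pi}\sin(2\pi x)$ as test functions to obtain $\W_1(\mu_n,\mu)\ge\tfrac{1}{4\pi n}\bigl|\sum_{j=1}^n \e^{2\pi\i S_j\alpha}\bigr|$ (inequality~\eqref{W1L}) and then invokes the LIL for the modulus of the complex sum, whereas you work with the sine sum alone; since Theorem~1.1 of \cite{BB1} is stated for $\bigl|\sum_j \e^{2\pi\i S_j\alpha}\bigr|$, the paper's route spares you the small extra step of checking that the asymptotic variance of the sine part is itself strictly positive.
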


Our proof of the above results is based on the PDE and mass transportation approach developed by L. Ambrosio, F. Stra and D. Trevisan in \cite{AST} for investigating the optimal matching problem on 2-dimensional compact manifolds. This methodology involves two key ingredients: the comparison between the $L^p$-Wasserstein distance and the $H^{-1, p}$ Sobolev norm of the density function, and the smoothing procedure via the heat semigroup. The first has also been developed by several authors (see e.g. \cite{L17, Peyre, Graham}). For the simple special case $\mathbb{T}$, we adopt the relationship between the $L^p$-Wasserstein distance and the $L^p$ norm of the antiderivative, which is proved in \cite{Graham}. The PDE approach has been a powerful tool in the study of the rate of convergence to equilibrium in the Wasserstein metric $\W_p$, for some recent developments, see e.g. \cite{L17, AGT, WZ, W2, WB, HMT, Bor}.

\textbf{Structure of the paper} In Section 2, we discuss  equivalent characterizations of our assumptions on characteristic functions $\varphi$ and provide several examples. In Section 3, we introduce the related notation and recall some basic facts, including several properties of the heat semigroup on the torus,  useful upper bounds of $L^p$-Wasserstein distance, and the continued fraction representation of $\alpha$. Section 4 is dedicated to proving the upper bounds. We also obtain the renormalized limit of $\E[\W_2^2(\mu_n,\mu)]$ when $\beta\gamma<2$. In Section 5, we focus on the lower bounds. We obtained some interesting results, such as  a more general result about the lower bound of $\W_1(\mu,\nu)$. Additionally, in Section 5.3.1, we present a new method for computing the moment estimation of the $H^{-1,p}$ Sobolev norm of the density function, which differs from the approach in \cite[Proposition 3.6]{HMT}. At the end of this paper, based on the moment estimation of $\W_p(\mu_n,\mu)$ and the Hewitt-Savage zero-one law, we derive some path-wise lower bounds for $\W_1(\mu_n,\mu)$.

\section{More about the assumptions on $\varphi$ and examples}

Let us begin with the assumption on $\varphi$ in Theorem \ref{TH1}:
\begin{equation}\label{condition00}
|1-\varphi(x)| \ge c|  x |^{\beta},
\end{equation}
which holds in an open neighborhood of $0$ with constants $c > 0$ and $0< \beta \le 2$. For integer-valued random variables, \eqref{condition00} has the following equivalent characterizations.

\begin{prp} \label{equivalent}
Let $\varphi$ be a characteristic function of an integer-valued random variable $X$ and let $0< \beta \leq 2$. The following assertions are equivalent.
\begin{enumerate}[(i)]
\item \label{item1}
There exists $c > 0$ such that in an open neighborhood of $0$,
\begin{align*}
|1 - \varphi(x)| \geq c |x|^\beta.
\end{align*}

\item \label{item2}
There exist $d \in \Z^+$ and $c' > 0$ such that for any $x \in \R$,
\begin{align*}
|1 - \varphi(2 \pi x)| \geq c' \| d x \|^\beta.
\end{align*}

\item \label{item3}
There exist $d \in \Z^+$ and $c' > 0$ such that for any $x \in \R$,
\begin{align*}
|1 - \varphi(2 \pi x)| \geq c' |\varphi(2\pi x)| \| d x \|^\beta.
\end{align*}

\end{enumerate}
\end{prp}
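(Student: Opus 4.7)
The plan is to choose $d$ explicitly as the span of $X$, namely
$$
d := \gcd\{k \in \Z : \P(X = k) > 0\},
$$
the largest positive integer with $X \in d\Z$ almost surely. The key structural observation that drives everything is that because $X \in d\Z$ a.s., we have $\e^{2\pi i (k/d) X} = 1$ for every integer $k$, and therefore
$$
\varphi(2\pi x) = \Ee[\e^{2\pi i x X}] = \Ee[\e^{2\pi i (x - k/d) X}] = \varphi\bigl(2\pi(x - k/d)\bigr), \qquad k \in \Z,\; x \in \R.
$$
Consequently $x \mapsto |1 - \varphi(2\pi x)|$ is $(1/d)$-periodic, and its zero set on $[0,1)$ equals $\{0, 1/d, \dots, (d-1)/d\}$, which is precisely the zero set of $x \mapsto \|dx\|$.

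I would deduce (ii) $\Rightarrow$ (i) by restricting to $|x| \leq 1/(2d)$, where $\|dx\| = d|x|$; the substitution $y = 2\pi x$ then gives (i) with $c = c'(d/(2\pi))^\beta$. For the converse (i) $\Rightarrow$ (ii), the local estimate $|1 - \varphi(2\pi u)| \geq c (2\pi)^\beta |u|^\beta$ for $|u|$ small, combined with the $(1/d)$-periodicity above, yields for each $k \in \{0, \dots, d-1\}$ and $x$ sufficiently close to $k/d$ the bound $|1 - \varphi(2\pi x)| = |1 - \varphi(2\pi(x - k/d))| \geq c(2\pi/d)^\beta \|dx\|^\beta$. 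Outside a fixed open neighborhood of $(1/d)\Z$, $|1 - \varphi(2\pi x)|$ is bounded below by a positive constant by continuity on the torus, while $\|dx\|^\beta \leq 2^{-\beta}$, so (ii) extends globally after adjusting the constant.

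The equivalence (ii) $\Leftrightarrow$ (iii) is nearly automatic: the direction (ii) $\Rightarrow$ (iii) follows from $|\varphi(2\pi x)| \leq 1$ with the same $c'$. For (iii) $\Rightarrow$ (ii) I would split the torus into the region $\{x : |\varphi(2\pi x)| \geq 1/2\}$, on which (iii) yields $|1 - \varphi(2\pi x)| \geq (c'/2) \|dx\|^\beta$, and its complement, on which $|1 - \varphi(2\pi x)| > 1/2 \geq 2^{\beta - 1} \|dx\|^\beta$.

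The only delicate step is the passage (i) $\Rightarrow$ (ii): the essential point is to correctly identify the span $d$ and establish the self-similarity $\varphi(2\pi x) = \varphi(2\pi(x - k/d))$ that turns each zero of $|1 - \varphi(2\pi \cdot)|$ into a translate of the one at the origin. Once this structural identity is in place, the rest reduces to a transparent local Hölder expansion near each of the $d$ zeros combined with a compactness argument on the remaining portion of the torus.
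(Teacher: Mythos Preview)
Your proposal is correct and follows essentially the same route as the paper: identify $d$ as the gcd of the support of $X$, exploit the resulting $(1/d)$-periodicity of $\varphi(2\pi\,\cdot\,)$ to reduce the global bound in (ii) to the local estimate (i) near each zero $k/d$, and handle the trivial directions via $|\varphi|\le 1$. The only cosmetic difference is that the paper closes the cycle as (i)$\Rightarrow$(ii)$\Rightarrow$(iii)$\Rightarrow$(i), whereas you prove (iii)$\Rightarrow$(ii) directly by splitting on $|\varphi|\gtrless 1/2$; both arguments are equally short.
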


\begin{proof}
Since $|\varphi| \leq 1$, the implication from \eqref{item2} to \eqref{item3} is immediate. Combining the facts that $\| d x \| = d |x|$ for $|x| \leq 1/(2d)$ and $\lim_{x \to 0} \varphi(2\pi x) = 1$, we settle the implication from \eqref{item3} to \eqref{item1}. Hence it suffices to address the implication from \eqref{item1} to \eqref{item2}.

For the integer-valued random variable $X$, we define the support of $X$ as follows
$$
\text{supp} (X) : = \big\{ m \in \Z \, ; \, \mathbb{P}( X = m) > 0   \big\}.
$$
Let $d_X$ be the greatest common divisor of $\text{supp}(X)$. Then its characteristic function reads
$$
\varphi(2\pi x) = \sum_{m \in \text{supp} (X)} \mathbb{P}(X = m) \, \e^{2\pi \i m x }.
$$
The condition \eqref{item1} excludes the possibility $\varphi \equiv 1$, therefore $d_X \in \Z^+$ is well-defined. And it is easily checked that $1/d_X$ is a period of $\varphi(2\pi x)$.

Let $Z_\varphi$ be the set of all the solutions of $\varphi(2\pi x) = 1$. Since $|\mathrm{Re}(\e^{2\pi \i m x})| = |\cos (2\pi m x)| \leq 1$ and $\sum_{m \in \Z} \mathbb{P}(X = m) = 1$,  by considering separately the real and imaginary parts, it is known that
\begin{align}
Z_\varphi = \big\{  x \in \R \, ; \, \e^{2\pi \i m x} = 1 \text{ for any } m \in \text{supp}(X) \big\} = \big\{ k / d_X ; k \in \Z  \big\}.
\end{align}

To establish \eqref{item2} with $d = d_X$, since $\varphi$ is periodic, we only need to show that for every $x_0 \in Z_\varphi = \{ k/d_X ; k \in \Z  \}$, there exists $c = c(x_0) > 0$ such that in an open neighborhood of $x_0$,
\begin{align} \label{every point}
|1 - \varphi(2 \pi x)| \geq c |x - x_0|^\beta.
\end{align}
In view of \eqref{item1} and the periodicity, \eqref{every point} immediately follows, which completes the proof.
\end{proof}

In analogy with Proposition \ref{equivalent} \eqref{item2}, the authors of \cite{BB} explored upper bounds for the discrepancy $D_n$ under the following more stringent requirements
\begin{equation} \label{strong1}
1 - |\varphi(2\pi x)| \geq c \| d x \|^\beta
\end{equation}
and
\begin{equation} \label{strong2}
| \varphi(2\pi x) - \varphi(2\pi y)| \geq c \| d (x - y) \|.
\end{equation}
In \cite[Section 3.2]{BB}, they present some concrete examples satisfying these conditions, for instance, specific random variables with heavy tails. It is evident that the framework of Theorem \ref{TH1} covers these examples.

For integrable $X$, its characteristic function $\varphi$ belongs to $C^1(\R)$ and satisfies $\varphi'(0) = \i \E X$. Then it is easy to see that $\varphi$ satisfies \eqref{condition00} with $\beta = 1$ if and only if $\E X \ne 0$. However, it is noteworthy that the integrability of $X$ is not required for the validity of \eqref{condition00} with $\beta = 1$. For instance, consider $X$ with $\mathbb{P}(X = 0) = 0$ and $\mathbb{P}(X = m) = 3/(\pi^2 m^2)$ for any $m \in \Z \backslash \{ 0 \}$. The characteristic function $\varphi$ is explicitly given by
\begin{align*}
\varphi(x) = 1 - \frac{3}{\pi} x + \frac{3}{2\pi^2} x^2, \quad x \in [0, 2\pi].
\end{align*}
Despite $\E |X| = \infty$, it is clear that $\varphi$ satisfies \eqref{condition00} with $\beta = 1$.

Now we discuss the assumption on $\varphi$ in Theorem \ref{lower final}:
\begin{equation}\label{condition000}
|1-\varphi(x)| \leq c|  x |^{\beta}.
\end{equation}
which holds for any $x \in \R$ with constants $c > 0$ and $0< \beta \le 2$. For non-integer values of $\beta$, \eqref{condition000} has the following characterization. For its proof, see \cite[Theorem 11.3.3]{K72}.

\begin{prp}
Let $\varphi$ be a characteristic function of a random variable $X$ and let $0< \beta < 2$ with $\beta \ne 1$. The following assertions are equivalent.
\begin{enumerate}[(i)]
\item There exists $c > 0$ such that for any $x \in \R$,
$$
|1 - \varphi(x)| \leq c |x|^\beta.
$$

\item
$$
\limsup_{x \to \infty} x^\beta \mathbb{P}(|X| \geq x) < \infty.
$$
\end{enumerate}
\end{prp}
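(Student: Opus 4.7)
The plan is to obtain the equivalence from classical truncation-type inequalities relating $1-\varphi$ near the origin to the tails of $X$. The two directions rest on complementary estimates, and the regimes $0<\beta<1$ and $1<\beta<2$ require slightly different treatments; this is exactly why the borderline $\beta=1$ is excluded, since that is where a non-vanishing mean of $X$ would create a linear term competing with the target $|x|^{\beta}$.

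For the direction (i) $\Rightarrow$ (ii), I would invoke an Esseen-type inequality of the form
\[
\mathbb{P}(|X|>2/T)\ \le\ \frac{C}{T}\int_{0}^{T}\bigl(1-\mathrm{Re}\,\varphi(u)\bigr)\,du,\qquad T>0.
\]
Inserting the bound $1-\mathrm{Re}\,\varphi(u)\le c u^{\beta}$ from (i) makes the right-hand side of order $T^{\beta}$, and setting $T=2/y$ yields $\mathbb{P}(|X|>y)\le C' y^{-\beta}$ for all large $y$, which is (ii).

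For (ii) $\Rightarrow$ (i), start from $1-\varphi(x)=\int_{\R}(1-e^{itx})\,d\mu(t)$ and split the integration at $|t|=1/|x|$. On the outer region, the trivial bound $|1-e^{itx}|\le 2$ combined with (ii) gives a contribution of order $\mathbb{P}(|X|>1/|x|)=O(|x|^{\beta})$. For $0<\beta<1$, the inner region is handled by $|1-e^{itx}|\le|tx|$ together with the Fubini identity $\int_{0}^{M}s\,d\nu(s)\le\int_{0}^{M}\mathbb{P}(|X|\ge s)\,ds$ (where $\nu$ is the law of $|X|$), which yields $|x|\int_{0}^{1/|x|}\mathbb{P}(|X|\ge s)\,ds=O(|x|^{\beta})$, the divergence $\int_{0}^{1/|x|}s^{-\beta}\,ds\asymp|x|^{\beta-1}$ matching the target exponent exactly. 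For $1<\beta<2$ this naive split only produces $O(|x|)$; I would then first deduce $\mathbb{E}|X|<\infty$ from (ii) and use $\varphi'(0)=0$, which follows from (i), to conclude $\mathbb{E} X=0$, and rewrite $1-\varphi(x)=\int(1-e^{itx}+itx)\,d\mu(t)$. The Taylor bound $|1-e^{itx}+itx|\le(tx)^{2}/2$ on the inner region together with $\int_{0}^{M}s^{2}\,d\nu(s)\le 2\int_{0}^{M}u\,\mathbb{P}(|X|\ge u)\,du$ then gives an inner contribution of order $x^{2}|x|^{\beta-2}=|x|^{\beta}$ precisely because $\beta<2$; the outer region, bounded by $|1-e^{itx}+itx|\le 2+|tx|$ and an analogous integration by parts, contributes the same order.

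The main obstacle in this plan is the centering step for $1<\beta<2$: extracting $\mathbb{E} X=0$ from (i) alone requires first producing a tail bound on $X$ out of the real-part control $\int(1-\cos(tx))\,d\mu(t)\le c|x|^{\beta}$ via $1-\cos u\ge c'(u^{2}\wedge 1)$, thereby securing integrability before $\varphi$ can be differentiated at the origin. This mechanism breaks down precisely at $\beta=1$, where a linear term in $1-\varphi$ is compatible with (ii) but cannot be absorbed into the bound, which is why that value must be excluded from the statement.
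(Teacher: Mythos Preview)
The paper does not give its own proof here; it simply cites Kawata, \emph{Fourier Analysis in Probability Theory}, Theorem~11.3.3. So there is no in-paper argument to compare your sketch against.

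Your direction (i) $\Rightarrow$ (ii) via the truncation inequality is correct, as is the case $0<\beta<1$ of (ii) $\Rightarrow$ (i). The genuine gap is in (ii) $\Rightarrow$ (i) for $1<\beta<2$: you write that $\varphi'(0)=0$ ``follows from (i)'', but in that implication (i) is the \emph{conclusion}, so invoking it is circular. Moreover (ii) by itself carries no centering information---any bounded $X$ with $\E X\ne 0$ satisfies (ii) vacuously, yet $|1-\varphi(x)|\sim |\E X|\,|x|$ near $0$, which defeats (i) for every $\beta>1$. Thus, taken literally, (ii) $\Rightarrow$ (i) is \emph{false} in the range $1<\beta<2$; the classical equivalence carries the additional hypothesis $\E X=0$ on the tail side (compare the paper's own remark immediately after this proposition that for $\beta=2$ the characterization requires $\E X=0$). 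Once that missing hypothesis is restored, your centering step becomes legitimate and the rest of your splitting argument---the Taylor bound on $|t|\le 1/|x|$ and the integration by parts on $|t|>1/|x|$---goes through as you describe. So the obstacle you flag in your closing paragraph is not a technical matter of extracting integrability from (i); it is that the statement, as transcribed, omits a hypothesis without which the backward implication cannot hold.
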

As for the particular case $\beta = 2$, \cite[Theorem 11.2.1]{K72} implies that \eqref{condition000} holds if and only if $\E X = 0$ and $\E X^2 < \infty$.

\section{Preliminaries}

\subsection{Fourier analysis and heat semigroup on $\mathbb{T}$}

The circle group $\mathbb{T}$ is a compact abelian Lie group, whose Pontryagin dual is $\Z$. The Haar measure on $\mathbb{T}$ is the restriction of Lebesgue measure, denoted by $\mu$. In the sequel, we write $L^p(\mathbb{T}) = L^p(\mathbb{T}, \mu)$ with associated norm $\| \cdot \|_p$, $1 \le p \le \infty$. Since $\mathbb{T}$ is identical to the unit circle, each function on $\mathbb{T}$ can be viewed as a $1$-periodic function $f$ on $\R$. Furthermore, if $f$ is sufficiently smooth, then it can be expanded in Fourier series as follows:
\begin{align} \label{fourier expansion}
f(x) = \sum_{m \in \Z} \widehat{f}(m) \e^{2 \pi \i m x}, \quad x \in \mathbb{T},
\end{align}
where
$$
\widehat{f}(m) = \int_{\mathbb{T}} f(y) \e^{-2 \pi \i m y} \,  \d \mu(y)
$$
is the $m$-th Fourier coefficient of $f$. We denote by $\ell^p$ the space $L^p(\Z)$ endowed with counting measure. It is well-known that the decay of Fourier coefficients reflects the regularity of the function. For instance, since $( \e^{2 \pi \i m  x} )_{m \in \Z}$ forms an orthonormal basis of $L^2(\mathbb{T})$, then by Parseval's relation, for any  $( c_m )_{m \in \Z} \in \ell^2$,
\begin{align*}
\bigg\| \sum_{m \in \Z} c_m \e^{2 \pi \i m  x} \bigg\|_2 = \big\|   c_m  \big\|_{\ell^2}.
\end{align*}
On the other hand, it is obvious that for any $( c_m )_{m \in \Z} \in \ell^1$,
\begin{align*}
\bigg\| \sum_{m \in \Z} c_m \e^{2 \pi \i m  x} \bigg\|_\infty \leq \big\|   c_m  \big\|_{\ell^1}.
\end{align*}
By interpolation, we have the following Hausdorff-Young inequality: for $2 \leq p \leq \infty$ and any $( c_m )_{m \in \Z} \in \ell^{p/(p-1)}$,
\begin{align*}
\bigg\| \sum_{m \in \Z} c_m \e^{2 \pi \i m  x} \bigg\|_p \leq \big\|   c_m   \big\|_{\ell^{p/(p-1)}}.
\end{align*}
This inequality combined with \eqref{fourier expansion} yields that for sufficiently smooth $f$,
\begin{align} \label{H-Y}
\| f \|_p \leq \big\|  \widehat{f}(m) \big\|_{\ell^{p/(p-1)}}.
\end{align}

The heat semigroup $(P_t)_{t \ge 0}$ on $\mathbb{T}$ is induced by the heat semigroup on $\R$ acting on $1$-periodic bounded measurable functions, which is defined through
\begin{align} \label{heat semigroup}
P_t f(x) = \int_{\mathbb{T}} f(y) p_t(x, y) \, \d \mu(y), \quad t > 0,
\, x \in \mathbb{T},
\end{align}
where
\begin{align} \label{heat kernel}
p_t(x, y) = \sum_{m \in \Z} \frac{1}{\sqrt{4\pi t}} \e^{-|x - y - m|^2/4t}, \quad x, y \in [0, 1)
\end{align}
is called the heat kernel of $\mathbb{T}$. It is known that for this semigroup, the Lebesgue measure $\mu$ is its invariant measure, and its infinitesimal generator is the Euclidean Laplacian $\Delta$ with the periodic boundary condition. The (complex-valued) eigenfunctions of $\Delta$ are $( \e^{2 \pi \i m  x} )_{m \in \Z}$, i.e.,
$$
\Delta ( \e^{2 \pi \i m  x} ) = - 4 \pi^2 m^2 \e^{2 \pi \i m  x}.
$$
By spectral decomposition, $P_t$ is a Fourier multiplier on $\mathbb{T}$. More precisely, for $f \in L^1(\mathbb{T})$,
\begin{align} \label{multiplier}
\widehat{P_t f}(m) = \e^{-4 \pi^2 m^2 t} \widehat{f}(m), \quad t \geq 0, \, m \in \Z,
\end{align}
which implies that $P_t f$ is smooth for $t > 0$.

The Fourier coefficients of $\nu \in \mathscr{P}(\mathbb{T})$ are defined by
$$
\widehat{\nu}(m) = \int_{\mathbb{T}} \e^{-2 \pi \i m  x} \d \nu(x), \quad m \in \Z,
$$
which is exactly the characteristic function of $\nu$ restricted to the set $2 \pi \Z$. By duality, the heat semigroup $(P_t)_{t \ge 0}$ induces a dual semigroup, which is still denoted by $(P_t)_{t \ge 0}$ for simplicity, on $\mathscr{P}(\mathbb{T})$ via the relation
$$
\int_{\mathbb{T}} f \, \d (P_t \nu) = \int_{\mathbb{T}} P_t f \, \d \nu, \quad f \in C(\mathbb{T}).
$$
Combined with \eqref{heat semigroup}, it follows that for $t>0$, $P_t \nu$ has a smooth density $\int_{\mathbb{T}} p_t(x, \cdot) \, \d \nu(x)$ with respect to $\mu$, and by \eqref{multiplier} its Fourier coefficients satisfy
\begin{align} \label{multiplier'}
\widehat{P_t \nu}(m) = \e^{-4 \pi^2 m^2 t} \widehat{\nu}(m), \quad m \in \Z.
\end{align}
Using Fubini's Theorem, it is easily verified that $\widehat{P_t \nu}(m)$ is also the $m$-th Fourier coefficient of $\int_{\mathbb{T}} p_t(x, \cdot) \, \d \nu(x)$. Consequently, with a slight abuse of notation, we then use the same notation $P_t \nu$ to denote its density.

Another useful concept we will use is the so-called antiderivative. For any $\nu \in \mathscr{P}(\mathbb T)$, we define a left-continuous function $F_\nu$ on $\mathbb T = [0, 1)$ as follows:
\begin{align} \label{def of anti}
F_\nu (x) = \nu\big(  [0, x) \big) - x.
\end{align}
Then $F_\nu$ is an antiderivative of $\nu - \mu$ in the distributional sense. Indeed, for any $g \in C^1(\mathbb T)$, by Fubini’s Theorem,
\begin{align} \label{anti}
\int_{0}^{1} F_\nu(x) g'(x) \, \d \mu(x)  = - \int_{0}^{1} g(x) \,  \d (\nu - \mu)(x),
\end{align}
where we used the fact that $(\nu - \mu)(\mathbb{T}) = 0$. As an application of
\eqref{anti}, the Fourier coefficients of $F_\nu$ are given by
\begin{align} \label{anti'}
\widehat{F_\nu}(m) = \begin{cases}
\frac{\widehat{\nu}(m)}{2\pi \i m} , & \textrm{if } m \ne 0;\\
\int_{0}^{1} x \d \nu(x) - \frac12 ,& \textrm{if } m = 0,
\end{cases}
\end{align}
where ${\widehat{\nu}(m)}$ denotes the $m$-th Fourier coefficient of $\nu$.
\subsection{Smoothing via the heat semigroup}

The following lemma is taken from \cite{Graham}, which provides an explicit representation of $\W_p(\nu, \mu)$ (see \cite{L17} for related bounds in a more general context).

\begin{lem} \label{Lemma2.0}

For any $1 \leq p \leq \infty$ and $\nu \in \mathscr{P} (\mathbb{T})$,
\begin{align*}
\W_p(\nu, \mu) = \inf_{y \in \R} \| F_\nu - y \|_p \leq \| F_\nu - \widehat{F_\nu}(0) \|_p,
\end{align*}
where $F_\nu$ is the antiderivative of $\nu - \mu$ given by \eqref{def of anti}. In the particular case $p = 2$,
\begin{align*}
\W_2^2(\nu, \mu) = \inf_{y \in \R} \| F_\nu - y \|_2^2 = \sum_{m \ne 0} \frac{|\widehat{\nu}(m)|^2}{4 \pi^2 m^2}.
\end{align*}
\end{lem}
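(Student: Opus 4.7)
The lemma combines two distinct assertions: the general identity $\W_p(\nu, \mu) = \inf_{y \in \R} \|F_\nu - y\|_p$ for every $p \in [1, \infty]$, and its Fourier specialization for $p = 2$. For the general identity, I would exploit the one-dimensional structure of $\mathbb{T}$ and the translation invariance of $\mu$ via a cut-and-straighten approach. For each $y \in \R$ I would construct an explicit coupling $\pi_y$ of $\nu$ and $\mu$ whose $L^p$ transport cost equals $\|F_\nu - y\|_p$: the parameter $y$ selects a cut point on $\mathbb{T}$, and after cutting, the monotone rearrangement transport on the unrolled interval has pointwise displacement at $x$ given by $F_\nu(x) - y$ (using that the circular metric agrees with the Euclidean metric on arcs of length at most $1/2$). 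Taking the infimum over $y$ yields the upper bound $\W_p(\nu,\mu) \leq \inf_y \|F_\nu - y\|_p$.

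For the matching lower bound, I would rely on Kantorovich--Rubinstein duality. In the case $p = 1$,
\begin{align*}
\W_1(\nu, \mu) = \sup_{f \in \Lip_1(\mathbb{T})} \int f \, \d(\nu - \mu) = \sup_{f \in \Lip_1(\mathbb{T})} \left( -\int_0^1 F_\nu(x) f'(x) \, \d x \right),
\end{align*}
the second equality coming from integration by parts via \eqref{anti}. A periodic $f \in \Lip_1(\mathbb{T})$ has $\|f'\|_\infty \leq 1$ and $\int_0^1 f' \, \d x = 0$; standard $L^\infty$--$L^1$ duality under the mean-zero constraint gives $\sup_{f'} \bigl( -\int F_\nu f' \, \d x \bigr) = \inf_y \|F_\nu - y\|_1$, closing the equality at $p=1$. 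For general $p > 1$, the lower bound requires a more delicate analysis of optimal couplings on the circle as carried out in \cite{Graham}; informally, any coupling can be decomposed according to a (possibly random) cut point, reducing to the monotone case already handled by the explicit construction above.

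For the Fourier specialization at $p = 2$, the quadratic minimization in $y$ is attained uniquely at $y^\ast = \int_0^1 F_\nu \, \d x = \widehat{F_\nu}(0)$, so $\W_2^2(\nu, \mu) = \|F_\nu - \widehat{F_\nu}(0)\|_2^2$. Parseval's identity converts this to $\sum_{m \neq 0} |\widehat{F_\nu}(m)|^2$, and substituting $\widehat{F_\nu}(m) = \widehat{\nu}(m)/(2\pi \i m)$ for $m \neq 0$ from \eqref{anti'} yields the claimed series.

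The main obstacle is the upper-bound construction for $p > 1$. On an interval, $\int_0^1 |G_\nu(x) - x|^p \, \d x$ is \emph{not} equal to $\W_p^p(\nu, \mu)$ for $p > 1$ (only the formula in terms of inverse CDFs is correct), so the identity on the torus genuinely exploits the freedom to shift the cut point. Verifying that the cut-based coupling has cost exactly $\|F_\nu - y\|_p$---rather than some integral against $\d \nu$, as the naive monotone coupling $T(x) = G_\nu(x)$ would produce---requires careful bookkeeping of mass flow through the unrolled interval and is the technical heart of the proof.
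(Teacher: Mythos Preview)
The paper does not give its own proof of this lemma; it simply quotes the result from \cite{Graham}. Your sketch therefore goes beyond what the paper does, and the $p=1$ duality argument and the $p=2$ Parseval computation are both correct.

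For general $p$, however, your final paragraph rests on a false premise that leads you to overestimate the difficulty. You assert that on an interval $\int_0^1 |G_\nu(x) - x|^p \, \d x \ne \W_p^p(\nu, \mu)$ for $p > 1$. For a generic reference measure that would indeed fail, but here $\mu$ is Lebesgue and equality \emph{does} hold for every $p$: the functions $x \mapsto G_\nu(x) - x$ and $t \mapsto t - G_\nu^{-1}(t)$ are equidistributed under Lebesgue measure on $[0,1]$ (for each $c$ the substitution $s = t - c$ gives $|\{x : G_\nu(x) - x > c\}| = |\{t : t - G_\nu^{-1}(t) > c\}|$), so that
\[
\int_0^1 |G_\nu(x) - x - y|^p \, \d x \;=\; \int_0^1 |G_\nu^{-1}(t) - t + y|^p \, \d t \qquad \text{for every } y \in \R.
\]
Combining this with the circle optimal-transport formula $\W_p^p(\nu,\mu) = \inf_\alpha \int_0^1 |G_\nu^{-1}(t) - t - \alpha|^p \, \d t$ (which your cut-and-straighten couplings do realize, and which \cite{Graham} establishes rigorously) immediately yields $\W_p^p(\nu,\mu) = \inf_y \|F_\nu - y\|_p^p$. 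The ``careful bookkeeping of mass flow'' you anticipate is therefore unnecessary: the $\d\nu$ versus $\d\mu$ mismatch you correctly flag is dissolved precisely by this equidistribution identity, which exploits the uniformity of the reference measure.
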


The next lemma gives an upper bound for $\W_p(\nu, \mu)$ using Fourier coefficients of $\nu$.

\begin{lem}\label{Lemma2.1}
There exists a constant $C > 0$ such that for any $2 \leq p < \infty$ and $\nu \in \mathscr{P}(\mathbb{T})$,
\begin{align} \label{lem2.1'}
\W_p(\nu, \mu) \leq C \inf_{0 < \varepsilon < 1} \bigg[ \sqrt{p} \, \varepsilon^{\frac12} \, + \,   \bigg( \sum_{m \ne 0} \frac{\e^{-m^2 \varepsilon}}{|m|^{\frac{p}{p-1}}}  |\widehat{\nu}(m)|^{\frac{p}{p-1}}\bigg)^{1 - \frac1p}   \bigg].
\end{align}
\end{lem}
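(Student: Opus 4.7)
The strategy is to smooth $\nu$ by the heat semigroup and split
$$\W_p(\nu,\mu) \le \W_p(\nu, P_t\nu) + \W_p(P_t\nu, \mu),$$
with $t > 0$ a mollification parameter to be optimized; the variable $\varepsilon$ in the statement will end up being a constant multiple of $t$. The advantage of this split is that $P_t\nu$ has a smooth density whose Fourier coefficients carry the extra factor $\e^{-4\pi^2 m^2 t}$, which is exactly the Gaussian kernel in \eqref{lem2.1'}, so the second piece becomes accessible through Lemma \ref{Lemma2.0} and Hausdorff-Young, while the displacement piece is controlled via Gaussian moments.

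For the displacement term I would use the probabilistic interpretation of the heat semigroup: since the generator of $P_t$ on $\mathbb T$ is the Laplacian $\Delta$, the measure $P_t\nu$ is the law of $Y + B_t \bmod 1$, where $Y \sim \nu$ and $B_t$ is an independent $\R$-valued Brownian motion with $\mathrm{Var}(B_t) = 2t$. The synchronous coupling, combined with $\|x\| \le |x|$ on $\mathbb T$, yields
$$\W_p^p(\nu, P_t\nu) \le \E|B_t|^p = (2t)^{p/2}\,\E|N|^p \le (Cpt)^{p/2},$$
where $N$ is standard normal and the last step is the classical Gaussian moment bound $\E|N|^p \le (Cp)^{p/2}$ obtained via Stirling. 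Hence $\W_p(\nu, P_t\nu) \le C\sqrt{p t}$.

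For the smoothed term, Lemma \ref{Lemma2.0} gives
$$\W_p(P_t\nu,\mu) \le \|F_{P_t\nu} - \widehat{F_{P_t\nu}}(0)\|_p.$$
Since $p \ge 2$ and $P_t\nu$ is smooth (so $F_{P_t\nu}$ is smooth on $\mathbb T$), the Hausdorff-Young inequality \eqref{H-Y} bounds this $L^p$ norm by the $\ell^{p/(p-1)}$ norm of the nonzero Fourier coefficients of $F_{P_t\nu}$. Combining \eqref{anti'} with the multiplier formula \eqref{multiplier'} gives, for $m \ne 0$,
$$\widehat{F_{P_t\nu}}(m) = \frac{\e^{-4\pi^2 m^2 t}\,\widehat{\nu}(m)}{2\pi\i m},$$
from which
$$\W_p(P_t\nu,\mu) \le \frac{1}{2\pi}\bigg(\sum_{m\ne 0} \frac{\e^{-4\pi^2 m^2 t\,p/(p-1)}\,|\widehat{\nu}(m)|^{p/(p-1)}}{|m|^{p/(p-1)}}\bigg)^{1-1/p}.$$

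Finally I would set $\varepsilon := 4\pi^2 t\cdot p/(p-1)$ and note that $1 < p/(p-1) \le 2$ for $p \ge 2$, so $\sqrt{p t}$ and $\sqrt{p\varepsilon}$ are comparable up to an absolute constant. Adding the two bounds and taking the infimum over the admissible range $\varepsilon \in (0,1)$ yields \eqref{lem2.1'}. The only genuine analytic input is the $\sqrt{p}$ dependence in the Gaussian moment estimate, which is what produces the $\sqrt{p}\,\varepsilon^{1/2}$ prefactor; the rest is bookkeeping of absolute constants and ensuring smoothness justifies the termwise Fourier manipulations, neither of which I expect to be a substantive obstacle.
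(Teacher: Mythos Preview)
Your proof is correct and follows essentially the same route as the paper: the triangle-inequality split via $P_t\nu$, Lemma~\ref{Lemma2.0} plus Hausdorff--Young for the smoothed piece, and the substitution $\varepsilon = \frac{4\pi^2 p}{p-1}t$ at the end are all identical. The only cosmetic difference is that the paper bounds the displacement term by first invoking joint convexity of $\W_p^p$ to reduce to $\W_p(P_t\delta_x,\delta_x)$ and then integrating against the heat kernel, whereas your synchronous Brownian coupling does this in one step; the resulting Gaussian moment and the $\sqrt{p}$ factor from Stirling are the same.
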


\begin{proof}

For a general $\nu \in \mathscr{P}(\mathbb{T})$, which may be singular with respect to $\mu$, we will use the operator $P_t$ to make it smooth. By the triangle inequality, for any $t > 0$,
\begin{align} \label{triangle}
\W_p(\nu, \mu) \leq  \W_p(P_t \nu, \nu) +  \W_p(P_t \nu, \mu).
\end{align}
For the term $\W_p(P_t \nu, \nu)$, by the joint convexity of $\W_p^p$ (cf. \cite{V}),
\begin{align} \label{convex}
\W_p(P_t \nu, \nu) \leq \bigg(  \int_{\mathbb{T}} \W_p^p(P_t \delta_x, \delta_x)   \, \d \nu(x)  \bigg)^{\frac1p} \leq \sup_{x \in \mathbb{T}} \W_p (P_t \delta_x, \delta_x).
\end{align}
For any $x \in \mathbb{T}$, combined with \eqref{heat kernel}, it follows that
\begin{align*}
& \W_p^p (P_t \delta_x, \delta_x) = \int_{\mathbb{T}} \| x - y \|^p p_t(x, y)  \, \d \mu(y)  = \frac{1}{\sqrt{4\pi t}} \int_{0}^{1} \| x - y \|^p \sum_{m \in \Z}  \e^{-|x - y - m|^2/4t} \, \d \mu(y)\\
& \leq \frac{1}{\sqrt{4\pi t}} \sum_{m \in \Z} \int_{0}^{1} | x - y - m|^p   \e^{-|x - y - m|^2/4t} \, \d \mu(y) = \frac{4^{p/2} \Gamma\big( \frac{p + 1}{2} \big) }{\sqrt{\pi}} t^{p/2}.
\end{align*}
Then as a consequence of Stirling's formula, there exists $c > 0$ such that for any $p \ge 2$ and $t > 0$,
\begin{align} \label{part 1}
\sup_{x \in \mathbb{T}} \W_p (P_t \delta_x, \delta_x) \leq c \sqrt{p} \, t^{\frac12}.
\end{align}

Now we turn to the analysis of $\W_p(P_t \nu, \mu)$. As already explained in the preceding section, $P_t \nu$ is absolutely continuous with respect to $\mu$. Combining \eqref{multiplier'}, \eqref{anti'} and Lemma \ref{Lemma2.0}, we obtain
\begin{align*}
\W_p(P_t \nu, \mu) \leq \bigg\|   \sum_{m \ne 0}   \frac{\e^{-4 \pi^2 m^2 t} \widehat{\nu}(m)}{2\pi \i m}  \e^{2\pi \i m x}      \bigg\|_p.
\end{align*}
Then applying the Hausdorff-Young inequality \eqref{H-Y}, we obtain
\begin{align} \label{part 2}
\W_p(P_t \nu, \mu) \leq \frac{1}{2 \pi} \bigg( \sum_{m \ne 0} \frac{\e^{-\frac{4 \pi^2 p}{p - 1} m^2 t}}{|m|^{\frac{p}{p-1}}}  |\widehat{\nu}(m)|^{\frac{p}{p-1}} \bigg)^{1 - \frac1p}.
\end{align}
Combining \eqref{triangle}-\eqref{part 2} and setting $\varepsilon = \frac{4 \pi^2 p}{p - 1} t$, the proof is complete after taking infimum.

\end{proof}

Alternatively, one can use the following $L^p$-Erd\H{o}s-Tur\'an inequality developed in \cite{Graham} to establish a refinement of Lemma \ref{Lemma2.1}. Indeed, by Propositions 2 and 4 of \cite{Graham}, there exists $C > 0$ such that for any $2 \leq p \leq \infty$ and $\nu \in \mathscr{P}(\mathbb T)$,
\begin{align} \label{another1}
\W_p(\nu, \mu) \leq C \inf_{N \in \N} \bigg[ \frac{1}{N + 1} \, + \,   \bigg( \sum_{0 < |m| \leq N} \frac{|\widehat{\nu}(m)|^{\frac{p}{p-1}}}{|m|^{\frac{p}{p-1}}}   \bigg)^{1 - \frac1p}   \bigg].
\end{align}
Now for any $\varepsilon \in (0, 1)$, we choose $N = N(\varepsilon) \in \N$ such that $(N + 1)^{-1} < \varepsilon^{\frac12} \leq N^{-1}$. Then for any $m$ with $0 < |m| \leq N$, it is clear that $m^2 \varepsilon \leq 1$, which implies
\begin{align} \label{another2}
\frac{1}{N + 1} + \bigg( \sum_{0 < |m| \leq N} \frac{|\widehat{\nu}(m)|^{\frac{p}{p-1}}}{|m|^{\frac{p}{p-1}}}   \bigg)^{1 - \frac1p} \leq \varepsilon^{\frac12} + \e \bigg( \sum_{m \ne 0} \frac{\e^{-m^2 \varepsilon}}{|m|^{\frac{p}{p-1}}}  |\widehat{\nu}(m)|^{\frac{p}{p-1}}\bigg)^{1 - \frac1p}.
\end{align}
Combining \eqref{another1} and \eqref{another2} gives Lemma \ref{Lemma2.1}. Note that this approach allows us to extend Lemma \ref{Lemma2.1} to the limit case $p = \infty$ and remove the factor $\sqrt{p}$ in \eqref{lem2.1'}.

In our proof of Lemma \ref{Lemma2.1}, another way to control $\W_p(P_t \nu, \mu)$ is to use the Sobolev inequality on the torus. By \cite[Proposition 1.1.]{BO13}, for any $2 \leq p < \infty$, there exists $C_p > 0$ such that
\begin{align*}
\bigg\|   \sum_{m \ne 0}   \frac{\e^{-4 \pi^2 m^2 t} \widehat{\nu}(m)}{2\pi \i m}  \e^{2\pi \i m x}      \bigg\|_p \leq C_p \bigg( \sum_{m \ne 0} \frac{\e^{-8 \pi^2 m^2 t}}{|m|^{1 + \frac2p}}  |\widehat{\nu}(m)|^2 \bigg)^{\frac12}.
\end{align*}
Combining this with Lemma \ref{Lemma2.1}, we obtain the following improvement of Lemma \ref{Lemma2.1}, which will not be used in the sequel.
\begin{lem}
For any $2 \leq p < \infty$, there exists a constant $c_p > 0$ such that for any $\nu \in \mathscr{P}(\mathbb{T})$,
\begin{align*}
\W_p(\nu, \mu) \leq c_p \inf_{\substack{0 < \varepsilon < 1 \\ A \subset \Z \backslash \{ 0 \}}} \bigg[  \varepsilon^{\frac12} \, + \, \bigg( \sum_{m \in A} \frac{\e^{-m^2 \varepsilon}}{|m|^{\frac{p}{p-1}}}  |\widehat{\nu}(m)|^{\frac{p}{p-1}}\bigg)^{1 - \frac1p} \, + \,  \bigg( \sum_{ \substack{m \in \Z \backslash A \\ m \ne 0}} \frac{\e^{-\frac{2(p-1)}{p}m^2 \varepsilon}}{|m|^{1 + \frac2p}}  |\widehat{\nu}(m)|^2 \bigg)^{\frac12} \bigg].
\end{align*}
\end{lem}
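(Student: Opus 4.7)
The plan is to revisit the proof of Lemma \ref{Lemma2.1} and, instead of bounding $\W_p(P_t\nu,\mu)$ by a single $L^p$ norm of the smoothed antiderivative, to first split that antiderivative into two pieces corresponding to the frequency set $A$ and its complement $(\Z\setminus\{0\})\setminus A$, treating each piece by the most favorable of the two available tools (Hausdorff--Young versus the Sobolev embedding from \cite[Proposition 1.1]{BO13}).

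More precisely, I would start from the triangle inequality $\W_p(\nu,\mu)\le \W_p(P_t\nu,\nu)+\W_p(P_t\nu,\mu)$ and keep the heat-smoothing bound
$$
\W_p(P_t\nu,\nu)\le c\sqrt{p}\,t^{1/2}
$$
already established in the proof of Lemma \ref{Lemma2.1}. Since $P_t\nu\ll\mu$, Lemma \ref{Lemma2.0} and \eqref{multiplier'}, \eqref{anti'} give
$$
\W_p(P_t\nu,\mu)\le \bigl\|F_{P_t\nu}-\widehat{F_{P_t\nu}}(0)\bigr\|_p = \Bigl\|\sum_{m\ne 0}\tfrac{\e^{-4\pi^{2}m^{2}t}\widehat{\nu}(m)}{2\pi\i m}\e^{2\pi\i m x}\Bigr\|_p.
$$
I would then write this sum as $f_A+f_{A^c}$, where $f_A$ retains only the indices $m\in A$ and $f_{A^c}$ the remaining nonzero $m$, and apply the triangle inequality in $L^p(\mathbb{T})$.

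For $f_A$ I would reuse the Hausdorff--Young argument from the proof of Lemma \ref{Lemma2.1}, which yields
$$
\|f_A\|_p\le \frac{1}{2\pi}\Bigl(\sum_{m\in A}\tfrac{\e^{-\frac{4\pi^{2}p}{p-1}m^{2}t}}{|m|^{p/(p-1)}}|\widehat{\nu}(m)|^{p/(p-1)}\Bigr)^{1-1/p}.
$$
For $f_{A^c}$ I would invoke the Sobolev estimate quoted just before the lemma, applied to the function whose only nonzero Fourier modes lie in $(\Z\setminus\{0\})\setminus A$, to obtain
$$
\|f_{A^c}\|_p\le C_p\Bigl(\sum_{\substack{m\in\Z\setminus A\\ m\ne 0}}\tfrac{\e^{-8\pi^{2}m^{2}t}}{|m|^{1+2/p}}|\widehat{\nu}(m)|^{2}\Bigr)^{1/2}.
$$
Finally, I would change variables by setting $\varepsilon=\frac{4\pi^{2}p}{p-1}t$, so that $t^{1/2}\asymp_p\varepsilon^{1/2}$ and $8\pi^{2}m^{2}t=\frac{2(p-1)}{p}m^{2}\varepsilon$, absorb all $p$-dependent numerical constants into a single $c_p$, and take the infimum over $0<\varepsilon<1$ and over $A\subset\Z\setminus\{0\}$ to recover the claimed inequality.

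There is no real obstacle here: once the frequency splitting $f_A+f_{A^c}$ is introduced, both halves are handled by estimates already stated in the excerpt, and the only bookkeeping is the rescaling $t\mapsto\varepsilon$, which reproduces the two distinct exponents $m^{2}\varepsilon$ and $\frac{2(p-1)}{p}m^{2}\varepsilon$ appearing in the statement. The mildly delicate point is simply to note that the Sobolev estimate applies just as well to the Fourier-truncated function $f_{A^c}$ as to the full sum, since truncating Fourier coefficients only deletes terms from the right-hand side and preserves the left-hand $L^p$ norm bound.
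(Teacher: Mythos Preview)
Your proposal is correct and matches the paper's own argument: the paper simply says ``Combining this with Lemma~\ref{Lemma2.1}, we obtain the following improvement'', and your frequency splitting $f_A+f_{A^c}$ with Hausdorff--Young on $f_A$ and the Sobolev bound on $f_{A^c}$ is precisely the intended way to carry this out. The substitution $\varepsilon=\tfrac{4\pi^2 p}{p-1}t$ and the observation that the Sobolev estimate applies to the truncated series are exactly the bookkeeping the paper leaves implicit.
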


\subsection{Continued fraction representation of an irrational number} \label{S2.3}
We provide in this part some basic facts on the continued fraction representation, we refer the reader to \cite[Chapter 1]{Cas} for more details.
It is known that every real number has a unique continued fraction representation, and this fraction is infinite for irrational numbers. For any $\alpha \in \R \backslash \Q$, let $\alpha = \big[ a_0; a_1, a_2, \ldots \big]$ be its continued fraction representation, where by definition $a_0 \in \Z$ and $a_k \in \Z^+$ for $k \geq 1$. Based on this representation, the convergents $( p_k/q_k )_{k \in \N}$ of $\alpha$ are defined by the following recurrence formulas
\begin{align} \label{recurrence}
\begin{cases}
p_{k + 1} = a_k p_k + p_{k - 1}\\
q_{k + 1} = a_k q_k + q_{k -1}, \qquad k \in \Z^+,
\end{cases}
\end{align}
with initial data $p_0 = 1, q_0 = 0, p_1 = a_0, q_1 = 1$. Then for $k \in \Z^+$, we have
\begin{align*}
\frac{p_k}{q_k}  = \big[ a_0; a_1, a_2, \ldots, a_{k -1} \big].
\end{align*}
And by induction,
\begin{align} \label{interlace}
p_k q_{k - 1} - q_k p_{k - 1} = (-1)^k, \quad k \in \Z^+,
\end{align}
which implies that $p_k/q_k$ is irreducible. One of the theoretical applications of the convergents is to approximate real numbers by rationals with small denominators. Indeed, the convergents $p_k/q_k$ have the following approximation properties
\begin{align} \label{appro0}
\| q \alpha \| \geq \| q_k \alpha \|, \quad 1 \leq q < q_{k + 1}, \, k \in \Z^+;
\end{align}
and
\begin{align} \label{appro}
\frac{1}{q_k + q_{k + 1}} < |q_k \alpha - p_k| = \| q_k \alpha \| \leq \frac{1}{q_{k + 1}}, \quad k \geq 2.
\end{align}
Another useful fact is that $q_k \alpha - p_k = (-1)^{k + 1}|q_k \alpha - p_k|$.
\begin{rem} \label{wy0}
Given $\gamma > 1$, by the construction in the proof of \cite[Theorem 8E]{S91}, for any $K\geq 2$, there exists an irrational number $\alpha$ whose convergent denominators $(q_k)_{k \in \N}$ satisfy
\begin{align*}
K q_k^\gamma \leq q_{k+1} < K q_k^\gamma + q_k, \quad k \geq 2.
\end{align*}
Combined this with \eqref{appro}, it follows that $\lim_{k \to \infty} q_k^\gamma \| q_k \alpha \| = K^{-1}$. On the other hand, for any $q_k \leq q < q_{k+1}$ with $k \geq 2$, by \eqref{appro0} and \eqref{appro}, we have
\begin{align*}
\| q \alpha \| \geq \| q_k \alpha \| > \frac{1}{K q_k^\gamma + 2 q_k } \geq \frac{1}{K q^\gamma + 2q},
\end{align*}
which implies that $\liminf_{q \to \infty} q^\gamma \| q \alpha \| \geq K^{-1}$. Hence, $\liminf_{q \to \infty} q^\gamma \| q \alpha \| = K^{-1}$. Since $K$ can take any value of $[2, \infty)$, it follows that the set $W(\gamma)$ shares the cardinality of the continuum.
\end{rem}

For the further developments, we need the following lemma, whose proof can actually be directly adopted from the argument of Proposition 4.1 in \cite{BB}. However, for the convenience of the reader, we present its proof here.

\begin{lem}\label{Lem2.2}
Given $\theta > 1$ and $\tau > 0$, then for any irrational number $\alpha$ associated with convergents $( p_k/q_k )_{k \in \N}$ and every $k \geq 3$,
\begin{align*}
\sum_{q_k \leq m < q_{k + 1}} \frac{1}{m^{\theta} \| m \alpha \|^\tau} \, = \,  O \bigg( \frac{1}{q_k^\theta \| q_k \alpha \|^\tau} \bigg) \, + \,
\begin{cases}
O\big( q_k^{-(\theta - 1)} \big), & \textrm{if }0 < \tau < 1;\\
O\big( q_k^{-(\theta - 1)} \log q_k \big), & \textrm{if } \tau = 1;\\
O\big( q_k^{-\theta + \tau} \big), & \textrm{if }\tau > 1,
\end{cases}
\end{align*}
where the implied constants only depend on $\theta$ and $\tau$.
\end{lem}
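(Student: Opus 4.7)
The plan is to partition the range $[q_k, q_{k+1})$ into blocks of length at most $q_k$ and bound the contribution of each block via a three-distance-type separation estimate. Set $a_{k+1} = \lfloor q_{k+1}/q_k \rfloor$ and $I_j = [jq_k, (j+1)q_k) \cap [q_k, q_{k+1})$ for $1 \le j \le a_{k+1}$, so $|I_j| \le q_k$ and $\bigcup_j I_j = [q_k, q_{k+1})$. Since $m \ge jq_k$ on $I_j$, the weight satisfies $m^{-\theta} \le (jq_k)^{-\theta}$, reducing the problem to
$$\sum_{q_k \le m < q_{k+1}} \frac{1}{m^\theta \|m\alpha\|^\tau} \le \frac{1}{q_k^\theta}\sum_{j=1}^{a_{k+1}} \frac{1}{j^\theta} \sum_{m \in I_j} \frac{1}{\|m\alpha\|^\tau}.$$

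For the inner sum, I use that the points $\{m\alpha\}_{m \in I_j} \subset \mathbb T$ are pairwise separated by at least $\|q_{k-1}\alpha\|$: for distinct $m_1, m_2 \in I_j$ we have $1 \le |m_1 - m_2| < q_k$, so $\|(m_1 - m_2)\alpha\| \ge \|q_{k-1}\alpha\|$ by \eqref{appro0}. Sort their distances to $0$ as $0 < d_1 \le \cdots \le d_{|I_j|}$. A second application of \eqref{appro0} gives $d_1 \ge \|q_k\alpha\|$, and for $l \ge 2$ the $l$ closest points lie within the arc of length $2d_l$ centered at $0$ while being mutually $\|q_{k-1}\alpha\|$-separated, which forces $d_l \ge (l-1)\|q_{k-1}\alpha\|/2$. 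Combined with $\|q_{k-1}\alpha\| \ge 1/(q_{k-1}+q_k) \ge 1/(2q_k)$ from \eqref{appro}, this gives
$$\sum_{m \in I_j} \frac{1}{\|m\alpha\|^\tau} \le \frac{1}{\|q_k\alpha\|^\tau} + (4q_k)^\tau \sum_{l=1}^{q_k} \frac{1}{l^\tau},$$
uniformly in $j$.

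The $l$-sum equals $O(q_k^{1-\tau})$, $O(\log q_k)$, or $O(1)$ according as $\tau < 1$, $\tau = 1$, or $\tau > 1$, so after multiplication by $(4q_k)^\tau$ the correction becomes $O(q_k)$, $O(q_k \log q_k)$, or $O(q_k^\tau)$ in the three regimes. Feeding this back into the first display and using that the hypothesis $\theta > 1$ makes $\sum_{j=1}^{a_{k+1}} j^{-\theta} = O(1)$, the main-term part contributes $O(q_k^{-\theta} \|q_k\alpha\|^{-\tau})$ and the correction part contributes exactly the three $\tau$-dependent cases stated in the lemma.

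The most delicate point to verify is the geometric bound $d_l \ge (l-1)\|q_{k-1}\alpha\|/2$: the factor $\tfrac{1}{2}$ comes from the fact that the closest points to $0$ can lie on either side and hence occupy an arc of total length $2d_l$ rather than $d_l$, and one must be careful that the pairwise-separation estimate only uses differences satisfying $|m_1 - m_2| < q_k$ (i.e., that stay within a single block). Once this geometric step is in place, the remainder is routine: the bound $\|q_{k-1}\alpha\| \ge 1/(2q_k)$ converts the dual separation into a power of $q_k$, and the factor $\sum_{j} j^{-\theta} = O(1)$ afforded by $\theta > 1$ takes care of the sum over blocks with no further loss.
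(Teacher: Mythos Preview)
Your proof is correct and takes a genuinely different route from the paper's. The paper partitions $[q_k,q_{k+1})$ according to the residue of $mp_k$ modulo $q_k$: it isolates the two ``dangerous'' residue classes $A=\{m:mp_k\equiv 0\}$ (multiples of $q_k$) and $B=\{m:mp_k\equiv(-1)^k\}$ (translates by $q_{k-1}$), handles each explicitly to produce the $q_k^{-\theta}\|q_k\alpha\|^{-\tau}$ term, and on the complementary set $C$ uses the approximation $\|m\alpha\|\ge\tfrac12\|mp_k/q_k\|$ together with the fact that $m\mapsto mp_k\pmod{q_k}$ is a bijection on each block of length $q_k$ to reduce to $\sum_{j\le q_k/2}j^{-\tau}$. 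By contrast, you never invoke $p_k$ at all: you use only the best-approximation property \eqref{appro0} to get pairwise separation $\ge\|q_{k-1}\alpha\|$ within each block, then a direct pigeonhole on sorted distances. This is shorter and more conceptual, collapsing the three-case split $A\cup B\cup C$ into a single geometric estimate; the paper's approach, on the other hand, makes the underlying arithmetic structure (which residues cause $\|m\alpha\|$ to be small) fully explicit and yields slightly tighter constants. Both arguments rely on $\theta>1$ in the same way, to make $\sum_j j^{-\theta}$ converge over blocks.
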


\begin{proof}
We set $\varepsilon_k  = q_k \alpha - p_k$, then as already explained, we have $\varepsilon_k = (-1)^{k + 1} \| q_k \alpha \|$ and $|\varepsilon_k| \leq q_{k + 1}^{-1}$. Notice that
$$
\| m \alpha \| = \left\|   \frac{m p_k}{q_k}  + \frac{m \varepsilon_k}{q_k}              \right\|
$$
and $\| m \varepsilon_k/q_k \| \leq q_k^{-1}$ for $q_k \leq m < q_{k + 1}$. Then it is natural to split the index set $\big\{  m \, ; \, q_k \le m < q_{k + 1} \big\}$ according to the congruence class of $m p_k$ modulo $q_k$. We thus define
\begin{equation*}
\begin{split}
& A = \big\{ q_k \le m < q_{k+1} \, ; \, m p_k \equiv 0
 \pmod{q_k} \big\},\\
& B = \big\{ q_k \le m < q_{k+1} \, ; \, m p_k \equiv (-1)^k \pmod{q_k} \big\},\\
& C =\big\{q_k \le m < q_{k+1} \, ; \, m p_k \not\equiv 0, (-1)^k \pmod{q_k} \big\}.
\end{split}
\end{equation*}

Since $p_k$ and $q_k$ are relatively prime, for every $m \in A$, we have $m = a q_k$ with some $a \in \Z^+$ and $a \leq q_{k + 1}/q_k$. Then
$$
\| m \alpha \| = a \| q_k \alpha \|,
$$
where we used that $q_k \geq 2$ for $k \ge 3$. Consequently,
\begin{align} \label{part1}
\sum_{m \in A} \frac{1}{m^{\theta} \| m \alpha \|^\tau} \leq \sum_{a = 1}^{\infty} \frac{1}{a^{\theta + \tau}} \cdot \frac{1}{q_k^\theta \| q_k \alpha \|^\tau} = O \bigg(  \frac{1}{q_k^\theta \| q_k \alpha \|^\tau} \bigg).
\end{align}

We now turn to the sum over $B$. It follows from \eqref{interlace} that for every $m \in B$, there exists $a \in \Z^+$ such that $m  = a q_k + q_{k - 1}$. Notice that $q_{k + 1} = a_k q_k + q_{k - 1}$, thus
$$
B = \big\{ a q_k + q_{k - 1} \,  ; \, 1 \leq a \leq a_k - 1 \big\}.
$$
Therefore for $m = a q_k + q_{k - 1} \in B$,
\begin{align*}
\| m \alpha \| & = \left\| \frac{(-1)^k}{q_k} + \frac{(-1)^{k + 1} m |\varepsilon_k|}{q_k}  \right\| = \frac{1 - (a q_k + q_{k - 1})|\varepsilon_k|}{q_k}\\
& \geq \frac{(q_{k + 1} - a q_k - q_{k - 1})|\varepsilon_k|}{q_k} = (a_k - a) |\varepsilon_k|,
\end{align*}
where we used $|\varepsilon_k| \leq q_{k + 1}^{-1}$ in the last line. We then have
\begin{align} \label{part2}
\sum_{m \in B} \frac{1}{m^{\theta} \| m \alpha \|^\tau} &\leq \sum_{a = 1}^{a_k - 1} \frac{1}{(a q_k)^\theta \big( (a_k - a)^\tau \| q_k \alpha \|^\tau \big)} \leq \sum_{a = 1}^{\infty} \frac{1}{a^\theta} \cdot \frac{1}{q_k^\theta \| q_k \alpha \|^\tau} \nonumber\\
& = O \bigg(  \frac{1}{q_k^\theta \| q_k \alpha \|^\tau} \bigg).
\end{align}

Finally, we evaluate the sum over $C$. For every $m \in C$, it is easily checked that
$$
\| m \alpha \| = \left\|   \frac{m p_k}{q_k}  + \frac{m \varepsilon_k}{q_k}    \right\| \geq \frac12 \left\| \frac{m p_k}{q_k} \right\|.
$$
Therefore for each $a \in \Z^+$,
\begin{align} \label{sum1}
\sum_{\substack{a q_k \leq m < (a + 1) q_k \\ m \in C}} \frac{1}{m^{\theta} \| m \alpha \|^\tau} \leq \frac{2^\tau}{a^\theta q_k^\theta} \sum_{a q_k < m < (a + 1) q_k} \frac{1}{ \| m p_k/ q_k  \|^\tau}.
\end{align}
Since $p_k$ and $q_k$ are relatively prime, we have
\begin{align} \label{sum2}
\sum_{a q_k < m < (a + 1) q_k} \frac{1}{ \| m p_k/ q_k  \|^\tau} \leq 2 q_k^{\tau} \sum_{j = 1}^{\lfloor q_k/2 \rfloor} \frac{1}{j^\tau} = \begin{cases}
O ( q_k ), & \textrm{if }0 < \tau < 1;\\
O ( q_k \log q_k ), & \textrm{if } \tau = 1;\\
O ( q_k^\tau ), & \textrm{if }\tau > 1,
\end{cases}
\end{align}
Note that this bound is independent of $a \in \Z^+$. Combining \eqref{sum1} and \eqref{sum2} yields
\begin{align} \label{part3}
\sum_{m \in C} \frac{1}{m^{\theta} \| m \alpha \|^\tau} &\leq \frac{2^\tau}{q_k^\theta} \sum_{a = 1}^{a_k} \frac{1}{a^\theta} \sum_{a q_k < m < (a + 1) q_k} \frac{1}{ \| m p_k/ q_k  \|^\tau} \nonumber\\
& = \begin{cases}
O\big( q_k^{-(\theta - 1)} \big), & \textrm{if }0 < \tau < 1;\\
O\big( q_k^{-(\theta - 1)} \log q_k \big), & \textrm{if } \tau = 1;\\
O\big( q_k^{-\theta + \tau} \big), & \textrm{if }\tau > 1,
\end{cases}
\end{align}
where we used the condition $\theta > 1$ so that $\sum_{a = 1}^{\infty} a^{-\theta} < \infty$. Adding \eqref{part1}, \eqref{part2} and \eqref{part3} completes the proof.
\end{proof}

\section{Upper bounds}

\subsection{Proof of Theorem \ref{TH1}} \label{Pf of T1}

By the definition of $\W_p$ and H\"older's inequality,
$$1 \leq p \leq q < \infty \Longrightarrow \W_p(\mu_n, \mu) \leq \W_q(\mu_n, \mu).
$$
Therefore it suffices to consider the case $p \geq 2$. According to Lemma \ref{Lemma2.1} and H\"older's inequality, there exists $c_1 > 0$, depending on $p$, such that for any $0 < \varepsilon < 1$,
\begin{equation}
\begin{split} \label{EWP1}
 \E \big[ \W_p(\mu_n,\mu) \big] & \le c_1
  \varepsilon^{\frac12} \, + \,  c_1 \E \left[ \bigg( \sum_{m \ne 0} \frac{\e^{-m^2 \varepsilon}}{|m|^{\frac{p}{p-1}}}  |\widehat{\mu_n}(m)|^{\frac{p}{p-1}}\bigg)^{1 - \frac1p} \right]  \\
 &\le c_1 \varepsilon^{\frac12} \, + \,  c_1  \bigg( \sum_{m \ne 0} \frac{\e^{-m^2 \varepsilon}}{|m|^{\frac{p}{p-1}}}  \E \big[ |\widehat{\mu_n}(m)|^2 \big]^{\frac p {2(p-1)}}\bigg)^{1-\ff 1 p},
\end{split}
\end{equation}
where
$$\widehat{\mu_n} (m) = \ff 1 n\sum_{j = 1}^n \e^{- 2\pi \i m S_j \aa}, \quad m\in\Z.$$

We then define
\begin{equation}\label{I}
I_1  := \sum_{m \ne 0} \frac{\e^{-m^2 \varepsilon}}{|m|^{\frac{p}{p-1}}}  \E \big[ |\widehat{\mu_n}(m)|^2 \big]^{\frac p {2(p-1)}}.
\end{equation}
To estimate this series, we need the following lemma regarding $\E \big[ |\widehat{\mu_n}(m)|^2 \big]$.
\begin{lem} \label{bound of fourier}
Under the hypotheses of Theorem \ref{TH1}, there exist $d \in \Z^+$ and $c_2 > 0$ such that for any $m \in \Z \backslash \{ 0 \}$ and $n \geq 1$,
$$
\E \big[ |\widehat{\mu_n}(m)|^2 \big] \leq \frac{c_2}{n \|  m  d \alpha \|^\beta}.
$$
\end{lem}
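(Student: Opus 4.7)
The plan is to expand $|\widehat{\mu_n}(m)|^2$ and exploit the i.i.d.\ hypothesis. Writing
$$n^2|\widehat{\mu_n}(m)|^2 = \sum_{j,k=1}^n \e^{-2\pi\i m(S_j-S_k)\alpha},$$
and noting that for $j>k$ the increment $S_j-S_k$ is a sum of $j-k$ i.i.d.\ copies of $X_1$, I get
$$\E\bigl[\e^{-2\pi\i m(S_j-S_k)\alpha}\bigr] = \overline{\varphi(2\pi m\alpha)}^{\,j-k},\qquad j>k,$$
and its complex conjugate when $j<k$. Setting $a:=\varphi(2\pi m\alpha)$, this collapses to the clean identity
$$\E\bigl[|\widehat{\mu_n}(m)|^2\bigr] = \frac{1}{n}+\frac{2}{n^2}\,\mathrm{Re}\sum_{\ell=1}^{n-1}(n-\ell)\,a^{\ell}.$$

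Since $\alpha$ is irrational and $m\neq 0$, the description of the zero set $Z_\varphi$ given in the proof of Proposition \ref{equivalent} forces $a\neq 1$, so the Ces\`aro sum admits the closed form $\sum_{\ell=1}^{n-1}(n-\ell)a^{\ell}=\frac{na}{1-a}-\frac{a(1-a^n)}{(1-a)^2}$. Using $|a|\le 1$ and $|1-a^n|\le 2$, this yields the three-term estimate
$$\E\bigl[|\widehat{\mu_n}(m)|^2\bigr]\le \frac{1}{n}+\frac{C_1}{n|1-a|}+\frac{C_2}{n^2|1-a|^2}.$$
Proposition \ref{equivalent} applied to the assumption \eqref{Con} then furnishes $d\in\Z^+$ and $c'>0$ such that $|1-a|\ge c'\|m d\alpha\|^\beta$ for every $m\in\Z$; upon substitution, the first two terms already achieve the target order $O\bigl(1/(n\|m d\alpha\|^\beta)\bigr)$.

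The only (mild) obstacle is the quadratic term $C_2/(n^2\|m d\alpha\|^{2\beta})$, which is not automatically dominated by $1/(n\|m d\alpha\|^\beta)$ when $\|m d\alpha\|^\beta$ is very small. I resolve this by combining with the trivial bound $\E[|\widehat{\mu_n}(m)|^2]\le 1$: in the regime $n\|m d\alpha\|^\beta\le 1$ the trivial estimate itself satisfies $\E[|\widehat{\mu_n}(m)|^2]\le 1\le 1/(n\|m d\alpha\|^\beta)$, whereas in the regime $n\|m d\alpha\|^\beta\ge 1$ we have $n|1-a|\ge c'$, so $1/(n^2|1-a|^2)\le 1/(c'\,n|1-a|)$, absorbing the quadratic term into the linear one. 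Merging the two regimes and collecting all constants into a single $c_2$ completes the proof.
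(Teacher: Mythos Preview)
Your proof is correct and follows essentially the same approach as the paper: expand the square, use the i.i.d.\ structure to reduce to powers of $a=\varphi(2\pi m\alpha)$, and invoke Proposition~\ref{equivalent}. The only tactical difference is that the paper, instead of passing to the full closed form of the Ces\`aro sum, writes
\[
\sum_{1\le j_1<j_2\le n}a^{\,j_2-j_1}=\frac{a}{1-a}\sum_{j_1=1}^{n-1}\bigl(1-a^{\,n-j_1}\bigr)
\]
and bounds $|1-a^{\,n-j_1}|\le 2$ directly, so the quadratic term $1/(n^2|1-a|^2)$ never appears and your case split on $n\|md\alpha\|^\beta\lessgtr 1$ becomes unnecessary.
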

\begin{proof}
By the definition of $\mu_n$,
\begin{align}\label{Hatmu}
\E \big[ |\widehat{\mu_n}(m)|^2 \big] = \E \bigg[  \bigg| \ff 1 n\sum_{j = 1}^n \e^{- 2\pi \i m S_j \aa} \bigg|^2  \bigg] = \frac1n + \frac2{n^2} \mathrm{Re}\bigg[ \sum_{1 \le j_1 < j_2 \le n } \E \big[ \e^{2 \pi \i m (S_{j_2} - S_{j_1}) \alpha} \big]  \bigg].
\end{align}
Since $X_i$'s are i.i.d., we have
\begin{align*}
& \sum_{1 \le j_1 < j_2 \le n } \E \big[ \e^{2 \pi \i m (S_{j_2} - S_{j_1}) \alpha} \big] = \sum_{1 \le j_1 < j_2 \le n} \E \bigg[ \prod_{i = j_1+1}^{j_2} \e^{2 \pi \i m X_i \alpha} \bigg] = \sum_{1 \le j_1 < j_2 \le n } \varphi(2 \pi m \alpha)^{j_2 - j_1}\\
& = \sum_{j_1 = 1}^{n - 1} \varphi(2 \pi m \alpha)^{-j_1} \sum_{j_2 = j_1 + 1}^{n}  \varphi(2 \pi m \alpha)^{j_2}
= \frac{\varphi(2 \pi m \alpha)}{1 - \varphi(2\pi m \alpha)} \sum_{j_1 = 1}^{n - 1} \big( 1 - \varphi(2 \pi m \alpha)^{n - j_1} \big).
\end{align*}
Using the condition \eqref{Con} and Proposition \ref{equivalent} leads to
\begin{align} \label{Norm}
\bigg|  \sum_{1 \le j_1 < j_2 \le n } \E \big[ \e^{2 \pi \i m (S_{j_2} - S_{j_1}) \alpha} \big]   \bigg| \leq \frac{2n}{c \|  m d \alpha  \|^\beta},
\end{align}
holds for some $d \in \Z^+$ and $c > 0$, where we used that $|\varphi| \leq 1$. Combining \eqref{Hatmu} and \eqref{Norm} completes the proof of this lemma.
\end{proof}
\begin{rem}
A further computation gives that (see also \cite[Proposition 2.2]{BB1})
\begin{align} \label{character}
\E \big[ |\widehat{\mu_n}(m)|^2 \big] = \frac{1 - |\varphi(2 \pi m \alpha)|^2}{|1 - \varphi(2 \pi m \alpha)|^2} \cdot \frac1n + \mathrm{Re} \bigg[ \frac{\varphi(2\pi m \alpha)^{n + 1} - \varphi(2\pi m \alpha)}{(1 - \varphi(2\pi m \alpha))^2} \bigg] \cdot \frac{2}{n^2},
\end{align}
which implies that for fixed $m \in \Z \backslash \{ 0 \}$,
\begin{align} \label{limit of fourier}
\lim_{n \to \infty} n \E \big[ |\widehat{\mu_n}(m)|^2 \big] = \frac{1 - |\varphi(2 \pi m \alpha)|^2}{|1 - \varphi(2 \pi m \alpha)|^2}.
\end{align}
\end{rem}

Applying this lemma, it follows that there exists $c_3 > 0$ such that
\begin{equation*}
I_1 \le  c_3 n^{-\frac{p}{2(p - 1)}}  \cdot \sum_{m=1}^\infty \frac{\e^{-m^2 \varepsilon}}{m^{\frac{p}{p-1}}  \|m d \alpha\|^{\frac{\beta p}{2(p-1)}}},
\end{equation*}
where we used the fact that $\| \cdot \|$ is symmetric. Denoting by $(p_k/q_k)_{k \in \N}$ the convergents of $d \alpha$, then Lemma \ref{Lem2.2} implies that there exists $c_4 > 0$ such that
\begin{align} \label{I01}
I_1 \leq  c_3 n^{-\frac{p}{2(p - 1)}}  \cdot \sum_{k = 1}^{\infty} \e^{- q_k^2 \varepsilon}  \sum_{q_k \le m < q_{k + 1}} \frac{1}{m^{\frac{p}{p-1}}  \|m d \alpha\|^{\frac{\beta p}{2(p-1)}} } \leq c_4 n^{-\frac{p}{2(p - 1)}} \big( 1 + I_2 + I_3 \big)
\end{align}
with
\begin{align*}
I_2 : = \sum_{k = 3}^{\infty}  \e^{  - q_k^2 \varepsilon } q_k^{\frac{p(\beta \gamma - 2)}{2(p - 1)}}
\end{align*}
and
\begin{align*}
I_3 : =
\begin{cases}
\sum_{k = 3}^{\infty} \e^{ - q_k^2 \varepsilon } q_k^{-\frac{1}{p - 1}}, & \textrm{if } 0 < \beta < \frac{2(p - 1)}{p};\\
\sum_{k = 3}^{\infty} \e^{- q_k^2 \varepsilon } q_k^{-\frac{1}{p - 1}} \log q_k, & \textrm{if } \beta = \frac{2(p - 1)}{p};\\
\sum_{k = 3}^{\infty} \e^{ - q_k^2 \varepsilon } q_k^{- \frac{p(2 - \beta)}{2(p - 1)}},& \textrm{if } \beta > \frac{2(p - 1)}{p},
\end{cases}
\end{align*}
where we also used the condition that $\| q \alpha \| \geq C q^{-\gamma}$ for any $q \in \N$.

To evaluate $I_2$ and $I_3$, we make the following observation.
By the recurrence formula \eqref{recurrence}, it follows that $q_{k + 1} = a_k q_k+ q_{k - 1} \geq 2 q_{k - 1}$. Consequently, for each $l \in \Z^+$, there are at most $2$ $q_k$'s that belong to the interval $[2^l, 2^{l + 1} )$. Therefore there exists $c_5 > 0$ such that
\begin{align} \label{i2}
I_2 \leq \sum_{l = 1}^{\infty} \sum_{2^l \leq q_k < 2^{l + 1}}\e^{- q_k^2 \varepsilon } q_k^{\frac{p(\beta \gamma - 2)}{2(p - 1)}} \leq c_5 \sum_{l = 1}^{\infty} \e^{ - 4^l \varepsilon} 2^{\frac{p(\beta \gamma - 2)}{2(p - 1)} l}.
\end{align}
The similar bound on $I_3$ is achieved in the same way.

Next we need the following simple lemma.
\begin{lem} \label{lem3.3}
Given $\eta \in \R$, define a function
$$
g_\eta(\varepsilon) = \sum_{l = 1}^{\infty} \e^{ - 4^l \varepsilon} 2^{\eta l}, \quad \varepsilon \in (0, 1].
$$
Then
\begin{align*}
g_\eta(\varepsilon) \asymp
\begin{cases}
1, & \textrm{if } \eta < 0;\\
\log(\varepsilon^{-1} + 1), & \textrm{if } \eta = 0;\\
\varepsilon^{-\eta/2}, & \textrm{if } \eta > 0.
\end{cases}
\end{align*}
\end{lem}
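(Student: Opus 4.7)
The plan is to handle the three cases separately by introducing a cutoff index $l_0=l_0(\varepsilon)$ chosen so that $4^{l_0}\varepsilon\asymp 1$, explicitly $l_0:=\lfloor \tfrac{1}{2}\log_2(\varepsilon^{-1})\rfloor$, which is nonnegative since $\varepsilon\in(0,1]$. The underlying idea is that for $l\le l_0$ the exponential weight $\e^{-4^l\varepsilon}$ is uniformly bounded away from $0$ and from above by $1$, so the sum behaves like an honest geometric (or harmonic) sum of the weights $2^{\eta l}$; for $l>l_0$ the double–exponential decay dominates any polynomial growth in $2^{\eta l}$ and contributes only $O(1)$ to the overall bound.

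For the case $\eta<0$ the conclusion is immediate: the upper bound follows from $\e^{-4^l\varepsilon}\le 1$ and the convergence of $\sum_{l\ge 1}2^{\eta l}$, while the lower bound follows by keeping only the $l=1$ term, which is at least $\e^{-4}\,2^{\eta}>0$ uniformly in $\varepsilon\in(0,1]$. For the case $\eta=0$, I would write $g_0(\varepsilon)=\sum_{l=1}^{l_0}\e^{-4^l\varepsilon}+\sum_{l>l_0}\e^{-4^l\varepsilon}$; the first piece is squeezed between $\e^{-1}l_0$ and $l_0$, and $l_0\asymp\log(\varepsilon^{-1}+1)$ by construction (treating $\varepsilon$ close to $1$ separately so that the $+1$ inside the log is harmless), while the tail $\sum_{l>l_0}\e^{-4^l\varepsilon}$ is comparable to $\sum_{j\ge 1}\e^{-4^j}=O(1)$ after the substitution $l=l_0+j$ and using $4^{l_0+j}\varepsilon\ge 4^j$.

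For the case $\eta>0$, apply the same splitting. The head $\sum_{l=1}^{l_0}\e^{-4^l\varepsilon}2^{\eta l}$ is bounded above by the geometric sum $\sum_{l=1}^{l_0}2^{\eta l}\lesssim 2^{\eta l_0}\asymp \varepsilon^{-\eta/2}$, and is bounded below by keeping just the top term $l=l_0$, yielding at least $\e^{-1}2^{\eta l_0}\asymp \varepsilon^{-\eta/2}$. The tail $\sum_{l>l_0}\e^{-4^l\varepsilon}2^{\eta l}$ is handled by writing $l=l_0+j$ and using $4^{l_0+j}\varepsilon\ge 4^j$, $2^{\eta(l_0+j)}\asymp \varepsilon^{-\eta/2}2^{\eta j}$, which reduces it to $\varepsilon^{-\eta/2}\sum_{j\ge 1}\e^{-4^j}2^{\eta j}=O(\varepsilon^{-\eta/2})$ since the last series converges. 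Combining the head and tail estimates yields the claimed two–sided bound $g_\eta(\varepsilon)\asymp \varepsilon^{-\eta/2}$.

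There is no serious obstacle; the only points that require a little care are checking that $l_0$ is a nonnegative integer for all $\varepsilon\in(0,1]$ (otherwise the lower bound via the $l=1$ term must be invoked) and handling the additive $+1$ in $\log(\varepsilon^{-1}+1)$ when $\varepsilon$ is bounded away from $0$, where $g_0(\varepsilon)$ is simply $\asymp 1$. Both issues are resolved by treating $\varepsilon\in[\tfrac{1}{4},1]$ and $\varepsilon\in(0,\tfrac{1}{4})$ separately, which makes all of the asymptotic comparisons above uniform in $\varepsilon$.
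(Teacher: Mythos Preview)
Your proof is correct and takes a genuinely different route from the paper's. The paper exploits the self-similar structure of $g_\eta$ via the functional identity $g_\eta(\varepsilon/4)=2^\eta \e^{-\varepsilon}+2^\eta g_\eta(\varepsilon)$, iterates it $k$ times to obtain two-sided bounds for $g_\eta(4^{-k})$ in terms of the partial geometric sums $\sum_{i=1}^{k}2^{i\eta}$, and then passes to a general $\varepsilon$ by monotonicity. Your approach is the more hands-on one: split at the cutoff $l_0\approx\tfrac12\log_2(\varepsilon^{-1})$ and estimate head and tail separately. Both arguments are short; the paper's recursion is slick and avoids any case splitting on the size of $\varepsilon$, while your method is perhaps more transparent and adapts immediately to sums with weights other than $2^{\eta l}$. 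One small correction: with your definition of $l_0$ one has $4^{l_0}\varepsilon\in(1/4,1]$, so the tail inequality should read $4^{l_0+j}\varepsilon>4^{\,j-1}$ rather than $\ge 4^{\,j}$; this does not affect the conclusion since $\sum_{j\ge1}\e^{-4^{j-1}}2^{\eta j}$ still converges.
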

\begin{proof}
The case $\eta < 0$ is obvious. It suffices to consider the case $\eta \geq 0$. Notice that for $\varepsilon \in (0, 1]$,
$$
g_\eta(\varepsilon/4) =  2^\eta \e^{-\varepsilon} + 2^\eta g_\eta(\varepsilon),
$$
so that
$$
 \e^{-1} 2^\eta + 2^\eta g_\eta(\varepsilon) \leq g_\eta(\varepsilon/4) \leq 2^\eta + 2^\eta g_\eta(\varepsilon).
$$
By iteration, we have for $k \in \Z^+$,
\begin{equation}
\begin{split}\label{iteration}
g_\eta(4^{-k}) &\leq 2^{\eta}  +  2^\eta g_\eta(4^{-(k - 1)} \big) \leq 2^{\eta} + 2^{2\eta} + 2^{2\eta} g_\eta(4^{-(k - 2)})\\
& \leq \cdots \leq \sum_{i = 1}^{k} 2^{i \eta} + 2^{k\eta} g_\eta(1)
\end{split}
\end{equation}
and
\begin{equation} \label{iteration'}
g_\eta(4^{-k}) \geq \e^{-1} 2^{\eta}  +  2^\eta g_\eta(4^{-(k - 1)} \big) \geq \cdots \geq \e^{-1} \sum_{i = 1}^{k} 2^{i \eta} + 2^{k\eta} g_\eta(1).
\end{equation}
For any $\varepsilon \in (0, 1)$, there exists $k \in \Z^+$ such that $4^{-k} \leq \varepsilon < 4^{-k+1}$, then $g_\eta(4^{-k+1}) < g_\eta(\varepsilon) \leq g_\eta(4^{-k})$. Combining this with \eqref{iteration} and \eqref{iteration'} completes the proof.
\end{proof}
Applying Lemma \ref{lem3.3} to \eqref{i2} yields
\begin{align} \label{I02}
I_2 =
\begin{cases}
O(1), & \textrm{if } \beta \gamma < 2;\\
O(\log(\varepsilon^{-1} + 1)), & \textrm{if } \beta \gamma = 2;\\
O ( \varepsilon^{-p(\beta \gamma - 2)/4(p - 1)} ), & \textrm{if } \beta \gamma > 2.
\end{cases}
\end{align}
Similarly, we have
\begin{align} \label{I03}
I_3 =
\begin{cases}
O(1), & \textrm{if } 0 < \beta < 2;\\
O ( \log(\varepsilon^{-1} + 1) ), & \textrm{if } \beta = 2.
\end{cases}
\end{align}
Recalling that $\gamma \geq 1$ and combining \eqref{EWP1}, \eqref{I}, \eqref{I01}, \eqref{I02} and \eqref{I03}, we deduce that, for any $\varepsilon \in (0, 1/2)$,
\begin{align*}
\E \big[ \W_p(\mu_n,\mu) \big] =
\begin{cases}
O\big( \varepsilon^{\frac12} \, + \, n^{-\frac12} \big), & \textrm{if } \beta \gamma < 2;\\
O\big( \varepsilon^{\frac12} \, + \, n^{-\frac12} \log(\varepsilon^{-1})^{\frac{p - 1}{p}}  \big), & \textrm{if } \beta \gamma = 2;\\
O \big( \varepsilon^{\frac12} \, + \, n^{-\frac12} \varepsilon^{-\frac{\beta \gamma - 2}{4}} \big), & \textrm{if } \beta \gamma > 2,
\end{cases}
\end{align*}
where the implied constants do not depend on $n$ and $\varepsilon$. The proof is complete by choosing
$
\varepsilon = \varepsilon(n) = \kappa n^{-\frac{2}{\beta \gamma \vee 2}},
$
where $\kappa > 0$ is a small constant.

For the particular case $p = 2$, the upper bounds for $\E \big[\W_2^2(\mu_n,\mu) \big]$ are achieved in the following proposition.

\begin{prp} \label{proposition2}
Under the hypotheses of Theorem \ref{TH1},
\begin{equation} \label{W_2^2}
\E \big[\W_2^2(\mu_n,\mu) \big] =
\begin{cases}
O \big( \frac{1}{n} \big) , & \textrm{if } \beta \gamma < 2;\\
O\big( \frac{\log n}{n} \big), & \textrm{if } \beta \gamma =2;\\
O\big( \frac{1}{n^{2/(\beta \gamma)}} \big),& \textrm{if } \beta \gamma >2.
\end{cases}
\end{equation}
Furthermore, if $\beta \gamma < 2$, then
\begin{align} \label{W_2^2'}
\lim_{n \to \infty} n \E \big[\W_2^2(\mu_n,\mu) \big] = \frac{1}{4 \pi^2} \sum_{m \ne 0} \frac{1 - | \varphi(2\pi m \alpha)|^2}{m^2 |1 - \varphi(2\pi m \alpha)|^2}.
\end{align}
\end{prp}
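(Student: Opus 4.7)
The plan is to compute $\E[\W_2^2(\mu_n,\mu)]$ directly using the exact Parseval identity from Lemma~\ref{Lemma2.0}, namely
\[
\W_2^2(\nu,\mu)=\sum_{m\ne 0}\frac{|\widehat{\nu}(m)|^2}{4\pi^2 m^2},
\]
taking expectation and bounding each Fourier coefficient via Lemma~\ref{bound of fourier}. The resulting series will be handled through the block decomposition over $[q_k,q_{k+1})$ of Lemma~\ref{Lem2.2} (applied with $\theta=2$, $\tau=\beta$), combined with the Diophantine bound $\|q_k d\alpha\|\ge Cq_k^{-\gamma}$, so that each block contributes $O(q_k^{\beta\gamma-2})$ plus strictly smaller lower-order pieces. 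The geometric growth $q_{k+1}\ge 2q_{k-1}$ lets me pass between sums over $k$ and dyadic sums as in Lemma~\ref{lem3.3}.

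In the subcritical regime $\beta\gamma<2$ the exponent $\beta\gamma-2$ is negative, so $\sum_k q_k^{\beta\gamma-2}$ converges geometrically and the bare Parseval identity already delivers $\E[\W_2^2(\mu_n,\mu)]=O(1/n)$. For $\beta\gamma\ge 2$ the unweighted series $\sum_m 1/(m^2\|md\alpha\|^\beta)$ diverges, so I will smooth by the heat semigroup exactly as in the proof of Lemma~\ref{Lemma2.1}: write $\W_2(\mu_n,\mu)\le\W_2(\mu_n,P_t\mu_n)+\W_2(P_t\mu_n,\mu)$, square, bound the first summand pathwise by $O(t)$ via \eqref{part 1} and joint convexity of $\W_2^2$, and expand the second via Parseval and \eqref{multiplier'} as
\[
\W_2^2(P_t\mu_n,\mu)=\sum_{m\ne 0}\frac{e^{-8\pi^2 m^2 t}\,|\widehat{\mu_n}(m)|^2}{4\pi^2 m^2}.
\]
Reapplying Lemma~\ref{bound of fourier}, the block decomposition, and Lemma~\ref{lem3.3} with $\eta=\beta\gamma-2$ gives a contribution of order $\log(1/t)/n$ when $\beta\gamma=2$ and $t^{-(\beta\gamma-2)/2}/n$ when $\beta\gamma>2$; balancing these against the $O(t)$ term by choosing $t=\log n/n$ and $t=n^{-2/(\beta\gamma)}$ respectively produces the stated rates in \eqref{W_2^2}.

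For the limit \eqref{W_2^2'} I return to the exact identity and interchange limit and summation. The pointwise convergence $n\,\E[|\widehat{\mu_n}(m)|^2]\to(1-|\varphi(2\pi m\alpha)|^2)/|1-\varphi(2\pi m\alpha)|^2$ is already recorded in \eqref{limit of fourier} as an immediate consequence of the closed-form expansion \eqref{character}, while Lemma~\ref{bound of fourier} supplies the uniform-in-$n$ domination $n\,\E[|\widehat{\mu_n}(m)|^2]/m^2\le c_2/(m^2\|md\alpha\|^\beta)$, whose sum over $m$ is finite under $\beta\gamma<2$ by the block-summation carried out above. Dominated convergence then yields \eqref{W_2^2'}. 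The main technical obstacle is the supercritical case $\beta\gamma>2$: only the exponential heat-kernel weight $e^{-cm^2 t}$ damps the resonant growth $q_k^{\beta\gamma-2}$ past the scale $m\sim t^{-1/2}$ strongly enough to recover the sharp exponent $2/(\beta\gamma)$, whereas any simple truncation of the bare Parseval identity only reaches the weaker rate $O(n^{-1/(\beta\gamma-1)})$.
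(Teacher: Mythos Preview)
Your proposal is correct and takes essentially the same approach as the paper. The paper's own proof of \eqref{W_2^2} is a single line---``by Lemma~\ref{Lemma2.1} and the same argument used above''---and when unpacked this is exactly your heat-semigroup smoothing plus block decomposition via Lemma~\ref{Lem2.2} and Lemma~\ref{lem3.3}; your treatment of the limit \eqref{W_2^2'} via Lemma~\ref{Lemma2.0}, \eqref{limit of fourier}, the domination from Lemma~\ref{bound of fourier}, and the convergence of $\sum_{m\ne 0} m^{-2}\|md\alpha\|^{-\beta}$ under $\beta\gamma<2$ is identical to the paper's argument.

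One very minor presentational difference: in the subcritical case $\beta\gamma<2$ you bypass the smoothing and apply the bare Parseval identity from Lemma~\ref{Lemma2.0} directly, whereas the paper formally routes everything through Lemma~\ref{Lemma2.1}; your way is slightly cleaner (and is in fact what the paper itself does implicitly when establishing \eqref{W_2^2''} for the limit). Also, your choice $t=\log n/n$ at $\beta\gamma=2$ works but is not optimal in the first term; $t\asymp 1/n$ (the paper's choice $\varepsilon=\kappa n^{-1}$) already suffices since the logarithm comes only from the second term.
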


\begin{proof}
Since \eqref{W_2^2} can be obtained by Lemma \ref{Lemma2.1} and the same argument used above, we only prove \eqref{W_2^2'}.
If $\beta \gamma  < 2$, the argument used in the previous proof shows that
\begin{align} \label{W_2^2''}
\sum_{m \ne 0} \frac{1}{m^2 \| m d \alpha \|^\beta} < \infty.
\end{align}
By Lemma \ref{Lemma2.0},
\begin{align}
n \E \big[\W_2^2(\mu_n,\mu) \big] = \frac{1}{4 \pi^2} \sum_{m \ne 0} \frac{n \E \big[ |\widehat{\mu_n}(m)|^2 \big]}{m^2}.
\end{align}
Combined with Lemma \ref{bound of fourier}, \eqref{limit of fourier} and \eqref{W_2^2''}, the dominated convergence theorem implies the required conclusion.
\end{proof}

\section{Lower bounds}

\subsection{Proof of Theorem \ref{TH2}}

First, by the Kantorovich duality (cf. \cite{V}) and the fact that the functions  $\cos(2\pi x)$ and $\sin(2\pi x)$ are both $1$-Lipschitz, we have
\begin{align}\label{W1L}
\W_1(\mu_n,\mu) \ge \ff{1}{4 \pi n} \bigg( \bigg| \sum_{j=1}^n \cos(2\pi S_j \alpha) \bigg| + \bigg| \sum_{j=1}^n \sin(2\pi S_j \alpha) \bigg| \bigg) \geq \frac{1}{4\pi n} \bigg|   \sum_{j=1}^n \e^{2\pi \i S_j \alpha} \bigg|.
\end{align}

As a consequence of H\"older's inequality, it follows that
\begin{equation}\label{Holder}
\E\bigg[\bigg|\sum_{j=1}^n \e^{2\pi \i S_j \aa} \bigg|^2 \bigg] \leq \E \bigg[\bigg| \sum_{j=1}^n \e^{2\pi \i S_j \aa}\bigg|\bigg]^{2/3} \cdot \E \bigg[ \bigg|\sum_{j = 1}^n \e^{2\pi \i S_j \aa}\bigg|^4\bigg]^{1/3}.
\end{equation}
Next, we need to evaluate the second and fourth moments of the random exponential sum $\sum_{j = 1}^{n} \e^{2\pi \i S_j \alpha}$.
The assumptions on $X_i$ and $\alpha$ imply that $|\varphi(2\pi \alpha)| < 1$ and
$$
\mathbb{P}(4\alpha(X_1 - X_2) \in \Z) < 1.
$$
By \cite[Proposition 2.2]{BB1}, for any integers $p, n \geq 1$ we have
\begin{align}\label{exponential sum}
\bigg|  \E\bigg[\bigg|\sum_{j=1}^n \e^{2\pi \i S_j \aa} \bigg|^{2p} \bigg] - \bigg( \frac{1 - |\varphi(2\pi \alpha)|^2}{|1 - \varphi(2\pi \alpha)|^2}  \bigg)^p p!^2 \binom{n}{p}    \bigg| \leq C_p n^{p-1},
\end{align}
where the constant $C_p$ only depend on $p$ and the common distribution of $X_i$. In particular, by applying \eqref{exponential sum} with $p = 1$ and $p = 2$, we deduce that there exist constants $c_1, c_2 > 0$ such that for any $n \geq 1$,
\begin{equation} \label{exp}
\E\bigg[\bigg|\sum_{j=1}^n \e^{2\pi \i S_j \aa} \bigg|^2 \bigg] \geq c_1 n \quad \textrm{and} \quad
\E\bigg[\bigg|\sum_{j=1}^n \e^{2\pi \i S_j \aa} \bigg|^4 \bigg] \leq c_2 n^2.
\end{equation}
Taking expectation in \eqref{W1L} and combining it with \eqref{Holder} and \eqref{exp}, we complete the proof of Theorem \ref{TH2}.

\begin{rem} \label{remark4.1}
The inequality \eqref{W1L}, which relates $\W_1$ to the exponential sums, serves a similar role as the Koksma's inequality for $D_n$ (cf. \cite[Chapter 2, Corollary 5.1]{KN}). Therefore, by the law of the iterated logarithm for random exponential sums (see \cite[Theorem 1.1]{BB1}), under the conditions of Theorem \ref{TH2},
$$0<\limsup_{n\to\infty}\ff{\left|\sum_{j=1}^n \e^{2\pi\i S_j \alpha}\right|}{\sqrt{n \log\log n}}<\infty\quad {\rm a.s.}$$
Combined with \eqref{W1L}, it follows that there exists a constant $c>0$ such that with probability 1,
$$
\limsup_{n \to \infty} \sqrt{\frac{n}{\log \log n} } \W_1(\mu_n, \mu) \geq c.
$$
\end{rem}

\subsection{Proof of Theorem \ref{lower final}}

Our proof relies on a general result on the lower bound for $L^1$-Wasserstein distance as follows, which might be of independent interest.
\begin{lem} \label{W1 lower-bound}
Let $\big\{ I_i \, ; \, i = 1, \ldots, K \big\}$ be a family of disjoint closed intervals (more precisely, closed arcs) contained in $\mathbb{T}$. The length of each $I_i$ is denoted by $L_i$. Then for any $\nu \in \mathscr{P}(\mathbb{T})$, which is supported in the complement of $\cup_{i = 1}^{K} I_i$ (namely, $\nu ( \cup_{i = 1}^{K} I_i  ) = 0$), we have
\begin{align*}
\W_1(\nu, \mu) \geq \frac{\sum_{i = 1}^{K} L_i^2 }{4}.
\end{align*}
\end{lem}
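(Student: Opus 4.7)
The approach I would take is via Kantorovich duality: to bound $\W_1(\nu,\mu)$ from below, it suffices to exhibit a single 1-Lipschitz function $f:\mathbb{T}\to\R$ that discriminates between $\nu$ and $\mu$ by at least $\tfrac{1}{4}\sum_i L_i^2$. The natural candidate is a sum of ``tent'' functions, one supported on each arc $I_i$, which exploits the fact that $\nu$ is blind to these arcs while $\mu$ sees them with mass $L_i$.

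\medskip

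Concretely, I would proceed as follows. First, for each $i\in\{1,\ldots,K\}$, write $I_i$ as an arc of length $L_i$ with midpoint $m_i$, and define
\[
f_i(x)=\Bigl(\|x-m_i\|-\tfrac{L_i}{2}\Bigr)\mathbbm{1}_{I_i}(x),
\]
i.e.\ the piecewise-linear function on $I_i$ that equals $0$ at the two endpoints and $-L_i/2$ at $m_i$, extended by zero outside $I_i$. Setting $f=\sum_{i=1}^{K} f_i$, I would verify three things: (a) each $f_i$ has Lipschitz constant $1$ and vanishes at the boundary of $I_i$, and because the intervals $I_i$ are pairwise disjoint, the global function $f$ is $1$-Lipschitz on $\mathbb{T}$; (b) since $\nu(\cup_i I_i)=0$ and $f\equiv 0$ off $\cup_i I_i$, we have $\int_{\mathbb{T}}f\,\d\nu=0$; (c) a direct computation of the triangle-shaped integral gives
\[
\int_{I_i} f_i\,\d\mu \;=\; 2\int_0^{L_i/2}\!\!(-t)\,\d t \;=\; -\frac{L_i^2}{4},
\]
hence $\int_{\mathbb{T}} f\,\d\mu=-\tfrac{1}{4}\sum_{i=1}^K L_i^2$.

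\medskip

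Finally, I would invoke the Kantorovich--Rubinstein duality
\[
\W_1(\nu,\mu)=\sup_{g\in\mathrm{Lip}_1(\mathbb{T})}\Bigl(\int_{\mathbb{T}} g\,\d\nu - \int_{\mathbb{T}} g\,\d\mu\Bigr),
\]
apply it to our $f$, and read off
\[
\W_1(\nu,\mu)\;\ge\;\int_{\mathbb{T}} f\,\d\nu - \int_{\mathbb{T}} f\,\d\mu\;=\;0-\Bigl(-\tfrac{1}{4}\sum_{i=1}^{K} L_i^2\Bigr)\;=\;\tfrac{1}{4}\sum_{i=1}^{K} L_i^2.
\]

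\medskip

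There is no serious obstacle here; the only point requiring mild care is verifying that $f$ remains $1$-Lipschitz globally on the torus. This is automatic because each $f_i$ vanishes outside $I_i$ and at $\partial I_i$, so two points $x,y$ lying in different arcs (or one in an arc and one outside) can always be connected through the ``zero'' region without increasing $|f(x)-f(y)|$ beyond the intrinsic distance $\|x-y\|$. Once this is observed, the rest is a one-line duality argument.
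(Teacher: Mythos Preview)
Your proposal is correct and follows essentially the same approach as the paper: build tent functions supported on each arc, sum them, and apply Kantorovich duality. The only cosmetic differences are the sign convention (the paper uses upward tents with $\int_{I_i} f_i\,\d\mu = L_i^2/4$) and the Lipschitz verification, which the paper handles in one stroke by observing that the sum of the tent functions equals the distance function $x\mapsto \inf_{y\notin\cup_i I_i}\|x-y\|$, automatically $1$-Lipschitz.
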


\begin{proof}
For each $i = 1, \ldots, K$, we denote the center of $I_i$ by $x_i$, then $I_i = [x_i - L_i/2, x_i + L_i/2]$. We introduce the ``tent-like'' function of the form
\begin{align*}
f_i(x) := \begin{cases}
x - x_i + L_i/2, & \textrm{if } x \in [x_i - L_i/2, x_i) ;\\
-x + x_i + L_i/2, & \textrm{if } x \in [x_i, x_i + L_i/2);\\
0, & \textrm{otherwise }.
\end{cases}
\end{align*}
A simple computation gives
\begin{align} \label{fi}
\int_{\mathbb{T}} f_i \, \d \mu = \frac{L_i^2}{4}.
\end{align}
Since $I_i$ are disjoint, it is easily verified that
\begin{align*}
\sum_{i = 1}^{K} f_i (x) = \inf_{y \notin \cup_{i = 1}^{K} I_i } \| x - y  \|, \quad x \in \mathbb{T}.
\end{align*}
Consequently, the function $\sum_{i = 1}^{K} f_i$ is $1$-Lipschitz. By the definition of $f_i$ and the assumption on $\nu$, $\sum_{i = 1}^{K} f_i$ vanishes on $\text{supp}(\nu)$.

Combining these facts with the Kantorovich duality yields
\begin{align*}
\W_1(\nu, \mu) = \sup_{\| f \|_{\text{Lip}} \leq 1} \bigg| \int_{\mathbb{T}} f  \,\d \nu - \int_{\mathbb{T}} f \, \d \mu \bigg| \geq \sum_{i = 1}^{K} \int_{\mathbb{T}}  f_i \, \d \mu = \frac{\sum_{i = 1}^{K} L_i^2}{4}.
\end{align*}
\end{proof}

\begin{rem}
Lemma \ref{W1 lower-bound} can be generalized to the setting of Polish spaces. Indeed, let $(M, \rho)$ be a Polish space, on which the $L^1$-Wasserstein distance is defined as usual. Let $O$ be an open subset of $M$, then the function
$$
F_O (x) = \inf_{y \in O} \rho(x, y), \quad x \in M
$$
is $1$-Lipschitz and vanishes on $O$. By the Kantorovich duality, for any probability measures $\nu_1$ and $\nu_2$ on $M$, if $\nu_1$ satisfies $\text{supp}(\nu_1) \subset O$, then
$$
\W_1(\nu_1, \nu_2) \geq \int_M F_O \, \d \nu_2.
$$
\end{rem}

The next lemma provides a connection between the growth of the integer sequence with the lower bound for $L^1$-Wasserstein distance.

\begin{lem} \label{W1 lower-principle}
Let $\alpha$ be an irrational number, and let $q \in \N$ such that  $\| q \alpha \| \leq C q^{-\gamma}$ holds for some constants $\gamma \ge 1$ and $C > 0$.
For simplicity, we set $K(q) = \lfloor q^\gamma / (3C) \rfloor$. Then for any $\nu \in \mathscr{P}(\mathbb{T})$ such that $\text{supp} (\nu) \subset \big\{ \{ j \alpha \} \, ; \, |j| \leq K(q) \big\}$, we have
$$
\W_1(\nu, \mu) \geq \frac{1}{36q}.
$$
\end{lem}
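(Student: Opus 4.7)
\medskip
\noindent\textbf{Proof proposal.}
The plan is to apply Lemma~\ref{W1 lower-bound} after showing that the points $\{j\alpha\}$ with $|j|\leq K(q)$ are all concentrated in small neighborhoods of a finite set of equally spaced rationals, leaving large complementary arcs whose squared lengths sum to a quantity of order $1/q$.

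First I would extract from the hypothesis the best rational approximation: choose $p\in\Z$ so that $|q\alpha-p|=\|q\alpha\|\leq Cq^{-\gamma}$, which gives $|\alpha-p/q|\leq Cq^{-\gamma-1}$. Then for every integer $j$ with $|j|\leq K(q)=\lfloor q^\gamma/(3C)\rfloor$, a one-line estimate yields
\[
\bigl| j\alpha - j\tfrac{p}{q}\bigr| \;\leq\; K(q)\cdot Cq^{-\gamma-1} \;\leq\; \frac{1}{3q},
\]
so the same bound holds for the distance in $\mathbb{T}$. Set $d:=q/\gcd(p,q)$, which is the order of $p/q$ in the group $\mathbb{T}$; then $\{jp/q\bmod 1 : j\in\Z\}=\{k/d:k=0,1,\ldots,d-1\}$, and consequently
\[
\mathrm{supp}(\nu)\;\subset\;\bigcup_{k=0}^{d-1}\bigl[\tfrac{k}{d}-\tfrac{1}{3q},\;\tfrac{k}{d}+\tfrac{1}{3q}\bigr].
\]

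Since $d\leq q$, the inequality $2/(3q)<1/d$ ensures these $d$ closed neighborhoods are pairwise disjoint in $\mathbb{T}$, and their complement in $\mathbb{T}$ is a disjoint union of $d$ open arcs, each of length $L:=1/d-2/(3q)$. The crucial numerical point is that
\[
L \;=\; \frac{1}{d}-\frac{2}{3q}\;\geq\;\frac{1}{3d},
\]
because this is equivalent to $2/(3d)\geq 2/(3q)$, which holds by $d\leq q$. I now apply Lemma~\ref{W1 lower-bound} with $I_i$ chosen as closed subarcs of these open complementary arcs (shrinking by an arbitrarily small $\varepsilon>0$ to guarantee the disjointness from $\mathrm{supp}(\nu)$ required by the lemma, and then letting $\varepsilon\to 0$). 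This gives
\[
\W_1(\nu,\mu)\;\geq\;\frac{1}{4}\sum_{i=1}^{d}L^2 \;=\;\frac{dL^2}{4}\;\geq\;\frac{d}{4}\cdot\frac{1}{9d^2}\;=\;\frac{1}{36d}\;\geq\;\frac{1}{36q},
\]
which is the claimed bound.

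The conceptual heart of the argument is the first step: the Diophantine hypothesis $\|q\alpha\|\leq Cq^{-\gamma}$ lets one replace the orbit $\{j\alpha:|j|\leq K(q)\}$ by its rational skeleton $\{k/d\}$ with controlled error $1/(3q)$, and the choice of the threshold $K(q)=\lfloor q^\gamma/(3C)\rfloor$ is exactly what makes this error comparable to the spacing $1/d\geq 1/q$. The only mildly delicate point I anticipate is keeping track of the factor $d\leq q$ (rather than $d=q$) in the final estimate, so that the lower bound $1/(36q)$ is independent of whether $\gcd(p,q)$ is $1$; the computation above handles this uniformly by using $L\geq 1/(3d)$ instead of $L= 1/(3q)$.
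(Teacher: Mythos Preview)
Your argument is correct and follows the same route as the paper's proof: cover the orbit segment by small arcs around the rationals $k/q$ (or $k/d$) using the Diophantine bound, then apply Lemma~\ref{W1 lower-bound} to the complementary arcs. The only difference is cosmetic: the paper works directly with the $q$ arcs centered at $k/q$ for $k=0,\ldots,q-1$ (which already gives complementary intervals of length exactly $1/(3q)$, so no $\gcd$ issue arises), whereas you pass to the reduced denominator $d=q/\gcd(p,q)$ and obtain the slightly sharper intermediate bound $1/(36d)$ before discarding it.
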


\begin{proof}
Let $p \in \Z$ such that $\| q \alpha \|  = |q \alpha - p|$. Then for every $j \in \Z$ with $|j| \leq K(q)$, by the assumption on $q$,
$$
\left| j \alpha -  \frac{j p}{q} \right| = |j| \frac{\| q \alpha \|}{q} < \frac{1}{3q}.
$$
This inequality indicates that all the points $\{ j \alpha \}$ $(|j| \leq K(q))$ can be covered by $\cup_{k = 0}^{q - 1} \big( \frac{k}{q} - \frac{1}{3q}, \frac{k}{q} + \frac{1}{3q} \big)$, whose complement in $\mathbb{T}$ is the union of $q$ closed intervals, each with a length of $1/(3q)$. Then by Lemma \ref{W1 lower-bound}, we have
$$
\W_1(\nu, \mu) \geq \frac{q}{4} \cdot \frac{1}{9q^2} = \frac{1}{36q}.
$$

\end{proof}

We are now ready to prove Theorem \ref{lower final}. Set $M_n = \max_{1 \leq j \leq n} |S_j|$. Then by the argument used in the proof of \cite[Lemma 6.2]{BB}, under the assumption on $\varphi$, there exist $C_1, C_2 > 0$ such that for all $n \in \N$,
$$
\mathbb{P}\big( M_n < C_1 n^{1/\beta} \big) \geq C_2.
$$

By our assumption on $\alpha$, there exists an infinite set $H_\gamma \subset \N$ such that $\| q \alpha \| \leq C q^{-\gamma}$ for $q \in H_\gamma$. For each $q \in H_\gamma$, we set $N(q) = \lfloor q^{\beta \gamma} / (3 C C_1)^\beta\rfloor$ and define the event $A_q := \{ M_{N(q)} < C_1 N(q)^{1/\beta} \}$. On $A_q$, it is known that for $j = 1, \ldots, N(q)$, $|S_j| \leq \lfloor q^\gamma/(3C)\rfloor$. Applying Lemma \ref{W1 lower-principle} and noticing that $\mathbb{P}(A_q) \ge C_2$, we obtain
\begin{align} \label{LOW}
\E \big[ \W_1(\mu_{N(q)}, \mu) \big] \geq \frac{C_2}{36q}.
\end{align}
Combined with the choice of $N(q)$, the proof is complete.

\subsection{Proof of Theorem \ref{special case}}

As before, we denote by $\varphi$ the characteristic function of $X_1$. By the assumptions on $X_1$, we have
\begin{align} \label{taylor}
\varphi(x) = 1 - \frac{\E X_1^2}{2}x^2 + o(x^2), \quad \textrm{as }  x \to 0,
\end{align}
which implies that \eqref{Con} holds for $\beta = 2$ and some $c > 0$ in an open neighborhood of $0$. On the other hand, by the definition of badly approximable irrationals, there exists $C > 0$ such that for any $q \in \N$,
\begin{align*}
\| q \alpha \| \geq C q^{-1}.
\end{align*}
Therefore the upper bound part of Theorem \ref{special case} is a corollary of Theorem \ref{TH1}, so we only need to consider the lower bound part for $p = 1$. Before proceeding, we list some useful properties of $\alpha$. Since the elements $(a_k)_{k \in \N}$ in the continued fraction of $\alpha$ are bounded, by \eqref{recurrence} and \eqref{appro}, its convergent denominators $(q_k)_{k \in \N}$ satisfy
\begin{enumerate}[(i)] \label{pro of qk}
    \item There exists $K > 1$ such that for any $k \in \Z^+$, $q_{k + 1} \leq K q_k$;

    \item $\| q_k \alpha \| \asymp q_k^{-1}$.
\end{enumerate}

For $\varepsilon \in (0, 1/2)$, we set
\begin{align*}
f_{n, \varepsilon}(x) := \big( P_\varepsilon \mu_n \big) (x) = \sum_{m \in \Z} \e^{-4\pi^2 m^2 \varepsilon} \widehat{\mu_n}(m) \e^{2\pi \i m x}, \quad x \in \mathbb{T},
\end{align*}
which is the density function of $P_\varepsilon \mu_n$ with respect to $\mu$. Applying \cite[lemma 2.1]{HMT} yields that there exists $C> 0$ such that for any $\varepsilon \in (0, 1/2)$ and $M > 0$,
\begin{align} \label{LOWER}
\E \big[ \W_1(\mu_n, \mu)  \big] \geq \frac{1}{M} \E \left[ \big\| \nabla (-\Delta)^{-1}(f_{n, \varepsilon} - 1) \big\|_2^2 \right] - \frac{C}{M^3} \E \left[ \big\| \nabla (-\Delta)^{-1}(f_{n, \varepsilon} - 1) \big\|_4^4 \right],
\end{align}
where $\nabla (- \Delta)^{-1} (P_t \nu - 1)$ is a smooth function and has the following Fourier expansion
\begin{align} \label{F expansion}
\nabla (- \Delta)^{-1} (P_t \nu - 1) (x) = \frac{\i}{2 \pi} \sum_{m \ne 0} \frac{\e^{-4 \pi^2 m^2 t} \widehat{\nu}(m)}{m}  \e^{2\pi \i m x}.
\end{align}
In what follows, we abbreviate
\begin{align*}
\text{A}(n, \varepsilon) := \E \left[ \big\| \nabla (-\Delta)^{-1}(f_{n, \varepsilon} - 1) \big\|_2^2 \right], \quad \text{B}(n, \varepsilon) := \E \left[ \big\| \nabla (-\Delta)^{-1}(f_{n, \varepsilon} - 1) \big\|_4^4 \right].
\end{align*}

By Proposition \ref{equivalent} and \eqref{taylor}, there exist $d \in \Z^+$ and $c > 0$ such that
\begin{align} \label{equivalent'}
|1 - \varphi(2\pi x)| \geq c \| d x \|^2, \quad x \in \R.
\end{align}
Notice that
\begin{align*}
\int_{\mathbb{T}} |\nabla (-\Delta)^{-1} (f_{n, \varepsilon} - 1)|^2 \, \d \mu  = \sum_{m = 1}^{\infty} \frac{\e^{-8 \pi^2 m^2 \varepsilon}}{2 \pi^2 m^2}   |\widehat{\mu_n}(m)|^2.
\end{align*}
Using \eqref{character}, \eqref{equivalent'} and $|\varphi| \leq 1$, we can write
\begin{align*}
\text{A}(n, \varepsilon) = J_1 + J_2,
\end{align*}
where
$$
J_1 = \frac{1}{2 \pi^2 n} \cdot  \sum_{m = 1}^{\infty} \frac{\e^{-8 \pi^2 m^2 \varepsilon}}{ m^2}  \frac{1 - |\varphi(2 \pi m \alpha)|^2}{|1 - \varphi(2 \pi m \alpha)|^2}
$$
and
$$
J_2 =  O \bigg( \frac{1}{n^2} \cdot \sum_{m = 1}^{\infty} \frac{\e^{-8 \pi^2 m^2 \varepsilon}}{m^2  \| m d \alpha  \|^4} \bigg).
$$
By the same argument used in Section \ref{Pf of T1}, we obtain
\begin{align} \label{J2}
J_2 = O(n^{-2} \varepsilon^{-1}).
\end{align}
As for $J_1$, let $(q_k)_{k \in \N}$ be the convergent denominators of $\alpha$, then
\begin{align*}
J_1 \geq \frac{1}{2 \pi^2 n} \cdot \sum_{k = 1}^{\infty} \frac{\e^{-8 \pi^2 q_k^2 \varepsilon}}{ q_k^2} \frac{1 - |\varphi(2 \pi q_k \alpha)|^2}{|1 - \varphi(2 \pi q_k \alpha)|^2}.
\end{align*}
By \eqref{taylor}, it is known that
\begin{align*}
\lim_{x \to 0} \frac{(1 - |\varphi(x)|^2) x^2}{|1 - \varphi(x)|^2} = \frac{4}{\E X_1^2} \in (0, \infty).
\end{align*}
Then it follows from the properties of $(q_k)_{k \in \N}$ that there exist $c_1, c_2 > 0$ such that
\begin{align*}
J_1 \geq \frac{c_1}{n} \cdot \sum_{k = 1}^{\infty} \frac{\e^{-8 \pi^2 q_k^2 \varepsilon}}{ q_k^2 \| q_k \alpha \|^2} \geq \frac{c_2}{n} \cdot  \sum_{k = 1}^{\infty} \e^{-8 \pi^2 K^{2k} \varepsilon}.
\end{align*}
Using the same approach as in the proof of Lemma \ref{lem3.3} with $\eta = 0$, we have
\begin{align} \label{J1}
J_1 \geq \frac{c_3 \log(\varepsilon^{-1})}{n}
\end{align}
for some constant $c_3 > 0$.

\subsubsection{A general moment estimate}

In this sub-section, we aim to study the upper bound for $\text{B}(n, \varepsilon)$. Indeed, we shall establish the following moment estimate in a slightly more general context. The argument developed here can provide an alternative proof for the moment bounds presented in \cite{HMT}.

\begin{prp} \label{proposition1}
Let $X_1,X_2,\ldots$ be integer-valued i.i.d. random variables with characteristic function $\varphi$, and let $S_j = \sum_{i = 1}^{j} X_i$. Let $\alpha$ be an irrational number such that
\begin{align} \label{con''}
\| q \alpha \| \geq C q^{-\gamma}
\end{align}
holds for any $q \in \N$ with some constants $\gamma \geq 1$ and $C > 0$.
Suppose that there exist real numbers $0 < \beta \le 2$, $c > 0$ and an integer $D > 0$ such that for any $x \in \R$,
\begin{equation} \label{con'}
1 - |\varphi(2\pi x)| \ge c \| D x \|^{\beta}.
\end{equation}
Set $\mu_n = \frac{1}{n} \sum_{j = 1}^{n} \delta_{\{ S_j \alpha \}}$, $n \geq 1$. If $\beta \gamma = 2$, then for any $p \in \Z^+$ and
$\kappa n^{- \frac12} \leq \varepsilon < 1/2$, where $\kappa > 0$, there exists $C' > 0$ such that for any $n \geq 2p$,
\begin{align*}
\E \left[ \int_{\mathbb{T}} |\nabla (-\Delta)^{-1} (f_{n, \varepsilon} - 1)|^{2p} \, \d \mu   \right] \leq \frac{C' (\log n)^p}{n^p}.
\end{align*}

\end{prp}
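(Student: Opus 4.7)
The plan is to bound $\E\|g\|_{2p}^{2p}$ with $g := \nabla(-\Delta)^{-1}(f_{n,\varepsilon}-1)$ by a Fourier-analytic expansion. By \eqref{F expansion}, $g$ is a random trigonometric series with coefficients $(\i/2\pi)e^{-4\pi^2 m^2\varepsilon}\widehat{\mu_n}(m)/m$; expanding $|g|^{2p} = g^p\bar g^p$ and integrating against $\mu$ on $\mathbb T$ reduces the problem to a multi-index sum over tuples $(m_1,\ldots,m_p,n_1,\ldots,n_p)\in (\Z\setminus\{0\})^{2p}$ constrained by $\sum_i m_i = \sum_j n_j$, namely
\begin{align*}
\E\|g\|_{2p}^{2p} = \frac{1}{(2\pi)^{2p}}\sum_{\substack{m_i,n_j\in\Z\setminus\{0\}\\ \sum m_i=\sum n_j}}\frac{\prod_i e^{-4\pi^2 m_i^2\varepsilon}\prod_j e^{-4\pi^2 n_j^2\varepsilon}}{\prod_i m_i\prod_j n_j}\,\E\Bigl[\prod_i\widehat{\mu_n}(m_i)\prod_j\overline{\widehat{\mu_n}(n_j)}\Bigr].
\end{align*}

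First I would establish a uniform moment bound on individual Fourier coefficients: for every integer $q\ge 1$ and every $m\ne 0$,
\begin{align*}
\E|\widehat{\mu_n}(m)|^{2q} \le \frac{C_q}{n^q\,\|mD\alpha\|^{q\beta}},
\end{align*}
by applying the exponential-sum moment formula of \cite[Proposition 2.2]{BB1} (with $\alpha$ replaced by $m\alpha$) together with the inequality $\frac{1-|\varphi(2\pi m\alpha)|^2}{|1-\varphi(2\pi m\alpha)|^2}\le \frac{2}{1-|\varphi(2\pi m\alpha)|}\le 2c^{-1}\|mD\alpha\|^{-\beta}$ furnished by \eqref{con'}. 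Next I split the multi-index sum according to its pairing structure. For each permutation $\sigma\in\mathfrak S_p$, the ``pair-matched'' configurations $m_i=n_{\sigma(i)}$ collapse to $\prod_{i=1}^p|\widehat{\mu_n}(m_i)|^2$; H\"older's inequality combined with the above moment bound controls their expectation by $C_p n^{-p}\prod_i\|m_iD\alpha\|^{-\beta}$. Summing over these configurations and invoking the $L^2$-type estimate underlying \eqref{J1} (derived from Lemma \ref{Lem2.2} under the critical assumption $\beta\gamma=2$, which gives $\sum_{m\ne 0}e^{-cm^2\varepsilon}m^{-2}\|mD\alpha\|^{-\beta}=O(\log\varepsilon^{-1})$) produces the desired bound $O((\log\varepsilon^{-1}/n)^p) = O((\log n)^p/n^p)$, exploiting $\varepsilon\ge \kappa n^{-1/2}$.

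The main obstacle is controlling the off-diagonal contribution from configurations with $\sum m_i=\sum n_j$ but $\{m_i\}_{i=1}^p\ne\{n_j\}_{j=1}^p$ as multisets. A naive H\"older followed by the Hausdorff--Young inequality \eqref{H-Y} applied to the auxiliary polynomial $\sum_m e^{-4\pi^2 m^2\varepsilon}|m|^{-1}\|mD\alpha\|^{-\beta/2}e^{2\pi\i m x}$ only yields $O((\log n)^{2p-1}/n^p)$, losing a factor $(\log n)^{p-1}$. To remove this loss I plan to view $|g|^2$ as a trigonometric polynomial with Fourier coefficients $B_k := \sum_{m-n=k}c_m\bar c_n$ (so that $\int|g|^{2p}\,d\mu$ is accessible, for even $p$, via Parseval applied iteratively to $|g|^2$), and then use Cauchy--Schwarz \emph{in pairs} inside $\E|B_k|^2$ so that only second-order moments of the $\widehat{\mu_n}$'s appear. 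After this pairing the constraint $\sum m_i=\sum n_j$ is absorbed into a convolution structure on the sequence $\tilde a_m = e^{-cm^2\varepsilon}/(|m|\|mD\alpha\|^{\beta/2})$, which I intend to control via Young's convolution inequality combined with a dyadic decomposition of $\Z$ along the convergent denominators $(q_k)$ of $D\alpha$; the balance $\beta\gamma=2$ is precisely what makes each dyadic scale contribute $O(1)$ rather than growing, so that only one logarithmic factor appears per pairing level, finally delivering the sharp $(\log n)^p/n^p$ bound.
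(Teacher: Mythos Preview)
Your treatment of the off-diagonal terms has a genuine gap. The assertion that one can ``use Cauchy--Schwarz in pairs inside $\E|B_k|^2$ so that only second-order moments of the $\widehat{\mu_n}$'s appear'' is not justified: for dependent variables a $2p$-point correlation $\E\bigl[\prod_s \widehat{\mu_n}(m_s)\bigr]$ cannot in general be reduced to a product of two-point correlations by Cauchy--Schwarz alone; any standard application returns higher individual moments, which you already observed costs a factor $(\log n)^{p-1}$. The Young/dyadic scheme you sketch is applied to the deterministic sequence $\tilde a_m$, but at that stage the probabilistic input has already been degraded to $\prod_s\|m_sD\alpha\|^{-\beta/2}$, and the constraint $\sum m_i=\sum n_j$ does not by itself create the decoupling needed to restore the lost logarithms. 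In particular, you would be bounding $\|H\|_{2p}^{2p}$ for $H(x)=\sum_{m\ne 0}\tilde a_m \e^{2\pi\i mx}$, and only $\|H\|_2^2=O(\log\varepsilon^{-1})$ is available; controlling $\|H\|_{2p}$ sharply would require exactly the kind of pointwise/convolution estimate you leave unspecified.

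The paper avoids Cauchy--Schwarz entirely. Its key step is a direct bound on the joint expectation that exploits the random-walk structure: ordering the time indices $j_1\le\cdots\le j_{2p}$ and telescoping via independence of increments gives
\[
\Bigl|\E\Bigl[\prod_{s=1}^{2p}\widehat{\mu_n}(m_s)\Bigr]\Bigr|\le \frac{1}{n^{2p}}\sum_{\sigma\in\mathrm S_{2p}}\prod_{s=1}^{2p}\min\Bigl\{\bigl(1-\bigl|\varphi\bigl(2\pi\textstyle\sum_{t\ge s}m_{\sigma(t)}\alpha\bigr)\bigr|\bigr)^{-1},\,n\Bigr\},
\]
so the bound involves the \emph{partial tail sums} $\sum_{t\ge s}m_{\sigma(t)}$ rather than the individual $m_s$. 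One then decomposes $\mathbb I_p$ according to which tail sums vanish, obtaining a direct product of ``irreducible'' blocks $\mathbb B_N$ (in which no intermediate tail sum vanishes). On a block of size $N$ exactly one factor equals $n$ and the remaining sum $F(N,\varepsilon)$ is shown, by a convolution recursion $F(N+1,\varepsilon)\le c\,\varepsilon^{-1}F(N,\varepsilon/8)$ based on a pointwise estimate of Lemma~\ref{lem6.3} type, to satisfy $F(N,\varepsilon)=O(\varepsilon^{-(N-2)}\log\varepsilon^{-1})$. With $\varepsilon\ge\kappa n^{-1/2}$ each block contributes $O(n^{-N/2}\log n)$, and the product over blocks yields $n^{-p}(\log n)^\lambda\le n^{-p}(\log n)^p$. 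The ingredient you are missing is precisely this telescoping joint bound, which produces the tail-sum structure and makes the single-logarithm-per-block counting possible.
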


Recalling \eqref{F expansion}, we obtain
$$
(-2\pi \i)^{2p} \int_{\mathbb{T}} |\nabla (-\Delta)^{-1} (f_{n, \varepsilon} - 1)|^{2p} \, \d \mu = \sum_{(m_1, \ldots , m_{2p}) \in \mathbb{I}_p }  \prod_{s = 1}^{2p} \frac{\e^{-4\pi^2 m_s^2 \varepsilon} \widehat{\mu_n}(m_s)}{m_s},
$$
where
$$
\mathbb{I}_p : = \bigg\{ (m_1, \ldots, m_{2p}) \in (\Z \backslash \{ 0 \})^{2p} \, ; \, \sum_{s=1}^{2p} m_s = 0  \bigg\}.
$$
Therefore
$$
\E \left[ \int_{\mathbb{T}} |\nabla (-\Delta)^{-1} (f_{n, \varepsilon} - 1)|^{2p} \, \d \mu   \right] \leq \frac{1}{(2\pi)^{2p}} \sum_{(m_1, \ldots , m_{2p}) \in \mathbb{I}_p } \prod_{s = 1}^{2p} \frac{\e^{-4\pi^2 m_s^2 \varepsilon} }{ |m_s|} \cdot \bigg| \E \bigg[  \prod_{s = 1}^{2p} \widehat{\mu_n}(m_s) \bigg] \bigg|.
$$

Now we need the following lemma.
\begin{lem} \label{lem6.1}
For any $(m_1, \ldots, m_{2p}) \in \Z^{2p}$ and $n \geq 2p$,
\begin{align*}
\bigg| \E \bigg[   \prod_{s = 1}^{2p} \widehat{\mu_n}(m_s)   \bigg] \bigg| \leq \frac{1}{n^{2p}} \cdot \sum_{\sigma \in \mathrm{S}_{2p}} \prod_{s = 1}^{2p} \min \bigg\{ \frac{1}{1 - \big| \varphi \big(2 \pi \sum_{t = s}^{2p} m_{\sigma(t)} \alpha \big) \big|}, n \bigg\}.
\end{align*}
where $\mathrm{S}_{2p}$ denotes the set of all permutations of $\{1, \ldots, 2p\}$.
\end{lem}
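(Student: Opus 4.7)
The plan is to expand $\E\bigl[\prod_{s=1}^{2p}\widehat{\mu_n}(m_s)\bigr]$ as an average over tuples of indices and then decompose the tuple space according to sorting order. By definition,
$$\E\bigg[\prod_{s=1}^{2p} \widehat{\mu_n}(m_s)\bigg] = \frac{1}{n^{2p}} \sum_{(j_1, \ldots, j_{2p}) \in \{1, \ldots, n\}^{2p}} \E\bigl[\e^{-2\pi \i \alpha \sum_s m_s S_{j_s}}\bigr].$$
Each tuple admits at least one permutation $\sigma \in \mathrm{S}_{2p}$ arranging it weakly increasingly, namely $j_{\sigma(1)} \leq j_{\sigma(2)} \leq \cdots \leq j_{\sigma(2p)}$. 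Taking absolute values and summing over all such $\sigma$'s would only over-count tuples with repeated entries by a harmless combinatorial constant, which reduces matters to bounding, for each fixed $\sigma$, the sum over weakly ordered index sequences $k_s := j_{\sigma(s)}$.

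For a fixed sorted sequence $0 =: k_0 \leq k_1 \leq \cdots \leq k_{2p} \leq n$, I would introduce the independent increments $Y_t := S_{k_t} - S_{k_{t-1}}$, each being a $d_t$-fold convolution of the law of $X_1$ with $d_t := k_t - k_{t-1}$, and rearrange
$$\sum_s m_{\sigma(s)} S_{k_s} = \sum_{s=1}^{2p} m_{\sigma(s)} \sum_{t=1}^{s} Y_t = \sum_{t=1}^{2p} M_t Y_t, \qquad M_t := \sum_{s=t}^{2p} m_{\sigma(s)}.$$
By the mutual independence of the $Y_t$'s and the elementary identity $\E[\e^{\i \theta Y_t}] = \varphi(\theta)^{d_t}$, the expectation factorizes as
$$\bigl|\E\bigl[\e^{-2\pi \i \alpha \sum_s m_{\sigma(s)} S_{k_s}}\bigr]\bigr| = \prod_{t=1}^{2p} |\varphi(2\pi \alpha M_t)|^{d_t}.$$
This factorization is the key step, and it is exactly what links the lemma's right-hand side to the structure of the random walk.

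It then remains to sum this product over sorted sequences. Reparametrizing by the gaps $(d_1, \ldots, d_{2p})$ with $d_t \geq 0$ and $\sum_t d_t \leq n$, and relaxing the constraint on the total so as to obtain independent summations, I would bound each factor by the elementary geometric estimate
$$\sum_{d=0}^{n} |\varphi(2\pi \alpha M_t)|^d \leq \min\bigg\{\frac{1}{1-|\varphi(2\pi\alpha M_t)|}, \, n+1\bigg\},$$
with the convention that the first entry is $+\infty$ when $|\varphi(2\pi\alpha M_t)|=1$. For $n \geq 2p$ the summand $n+1$ can be absorbed into $n$ up to a harmless constant, so combining everything yields the asserted bound. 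The main obstacle is really just the combinatorial book-keeping, namely ensuring that the sorting decomposition is carried out cleanly and that tied tuples are absorbed into the permutation sum without affecting the polynomial order in $n$; the probabilistic content reduces to the fact that characteristic functions linearize convolution, and the analytic content is the truncated geometric series bound.
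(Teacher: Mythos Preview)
Your approach is essentially the same as the paper's: expand the product, decompose the index set by sorting permutation, factorize the expectation via the i.i.d.\ increments, and bound each factor by a truncated geometric series. The only difference is organizational: the paper sums iteratively over $j_{2p},j_{2p-1},\ldots,j_1$ (each step contributing at most $n$ terms since $j_{s-1}\ge 1$), which gives $\min\{1/(1-|\varphi|),\,n\}$ exactly, whereas your gap reparametrization followed by relaxing to independent sums produces $\min\{1/(1-|\varphi|),\,n+1\}$ for the factors with $t\ge 2$. This is the ``harmless constant'' you flag, but note that the lemma as stated carries no constant in front. You can recover the sharp form with no extra work: since $d_1\ge 1$, the constraint $\sum_t d_t\le n$ forces $d_t\le n-1$ for each $t\ge 2$, so those geometric sums also have at most $n$ terms; alternatively, just do the paper's nested summation over the $k_s$ directly instead of passing to gaps.
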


\begin{proof}
We can directly calculate as follows:
\begin{align*}
\E \bigg[   \prod_{s = 1}^{2p} \widehat{\mu_n}(m_s)   \bigg] = \frac1{n^{2p}} \cdot \sum_{1\le j_1,\ldots,j_{2p} \le n} \E \big[  \e^{-2 \pi \i \sum_{s = 1}^{2p} m_s S_{j_s} \alpha} \big],
\end{align*}
which implies that
\begin{align} \label{decomposition}
\bigg| \E \bigg[   \prod_{s = 1}^{2p} \widehat{\mu_n}(m_s)   \bigg] \bigg| \leq \frac{1}{n^{2p}} \cdot  \sum_{\sigma \in \mathrm{S}_{2p}} \sum_{1\le j_1 \le \cdots \le j_{2p} \le n}   \left| \E \big[  \e^{-2\pi \i \sum_{s = 1}^{2p} m_{\sigma(s)} S_{j_s} \alpha} \big]      \right|.
\end{align}
In what follows, we work with the case where $\sigma$ is the identity permutation, and note that the other cases can be treated similarly.

Since $X_i$'s are i.i.d. and $S_j - S_{j'} = \sum_{i = j'+1}^{j} X_i$ where $j' \le j$, by induction, we have for $1\le j_1 \le \cdots \le j_{2p} \le n$,
\begin{align*}
 \E \big[  \e^{-2\pi \i \sum_{s = 1}^{2p} m_s S_{j_s} \alpha} \big] &= \varphi(-2 \pi m_{2p} \alpha)^{j_{2p} -j_{2p - 1}} \E \big[  \e^{-2\pi \i (\sum_{s = 1}^{2p - 1} m_s S_{j_s} + m_{2p} S_{j_{2p - 1}} ) \alpha}   \big]\\
& = \cdots = \prod_{s = 1}^{2p} \varphi \big( -2\pi \sum_{t = s}^{2p} m_t \alpha \big)^{j_s - j_{s - 1}},
\end{align*}
where by convention we define $j_0 = 0$. Then we obtain
\begin{align*}
&  \sum_{1\le j_1 \le \cdots \le j_{2p} \le n}  \left| \E \big[  \e^{-2\pi \i \sum_{s = 1}^{2p} m_s S_{j_s} \alpha} \big] \right|
\leq \sum_{1\le j_1 \le \cdots \le j_{2p} \le n} \prod_{s = 1}^{2p} \big| \varphi \big( 2\pi \sum_{t = s}^{2p} m_t \alpha \big) \big|^{j_s - j_{s - 1}}\\
& \leq \sum_{1 \le j_1 \le \cdots \le j_{2p - 1} \le n} \prod_{s = 1}^{2p-1} \big| \varphi \big( 2\pi \sum_{t = s}^{2p} m_t \alpha \big) \big|^{j_s - j_{s - 1}} \sum_{j_{2p} = j_{2p - 1}}^{n} \big|\varphi (2 \pi m_{2p} \alpha)\big|^{j_{2p} - j_{2p - 1}}\\
& \leq \min \bigg\{ \frac{1}{1 - \big| \varphi \big(2 \pi m_{2p} \alpha \big) \big|}, n \bigg\} \cdot \sum_{1 \le j_1 \le \cdots \le j_{2p - 1} \le n} \prod_{s = 1}^{2p-1} \big| \varphi \big( 2 \pi \sum_{t = s}^{2p} m_t \alpha \big) \big|^{j_s - j_{s - 1}}\\
& \leq \cdots \leq \prod_{s = 1}^{2p} \min \bigg\{ \frac{1}{1 - \big| \varphi \big(2 \pi \sum_{t = s}^{2p} m_t \alpha \big) \big|}, n \bigg\}.
\end{align*}
Combined with \eqref{decomposition}, the proof is completed by summing over all $\sigma \in \mathrm{S}_{2p}$.

\end{proof}

Notice that the index set $\mathbb{I}_p$ remains invariant under the action of $\mathrm{S}_{2p}$, combining Lemma \ref{lem6.1} and \eqref{con'}, it follows that there exists $c > 0$, which only depends on $p \in \Z^+$ and the constants in \eqref{con'}, such that
\begin{equation} \label{med}
\begin{split}
& \E \left[ \int_{\mathbb{T}} |\nabla (-\Delta)^{-1} (f_{n, \varepsilon} - 1)|^{2p} \, \d \mu   \right] \\
& \leq \frac{c}{n^{2p}} \cdot \sum_{(m_1, \ldots , m_{2p}) \in \mathbb{I}_p } \prod_{s = 1}^{2p} \frac{\e^{-4\pi^2 m_s^2 \varepsilon}}{|m_s|} \min \bigg\{ \frac{1}{\big\|  \sum_{t = s}^{2p} m_t D \alpha \big\|^\beta }, n \bigg\}.
\end{split}
\end{equation}
Since \eqref{con''} still holds with $\alpha$ replaced by $D\alpha$, we assume for simplicity that $D = 1$.

Next, we need to decompose $\mathbb{I}_p$ in a proper manner. For each $A \subseteq \{1, 2, \ldots, 2p \}$, we define a subset of $\mathbb{I}_p$ as follows
\begin{align*}
\mathbb{I}_p (A) : = \bigg\{ (m_1, \ldots, m_{2p}) \in \mathbb{I}_p \, ; \, \sum_{t = s}^{2p} m_t = 0 \text{ if and only if $s \in A$} \bigg\}.
\end{align*}
We are only concerned with those $A$'s that satisfy $\mathbb{I}_p (A) \ne \emptyset$.
We write $A$ as the form
$$A = \{s_1, \ldots, s_{\lambda}   \},$$
where $s_1 < \cdots < s_\lambda$. By definition, $s_1 = 1$. The fact $\mathbb{I}_p (A) \ne \emptyset$ implies that $A$ contains no consecutive integers, otherwise there exists some $s_v$ such that $m_{s_v} = 0$, which contradicts our definition of $\mathbb{I}_p$. Consequently, for any $1 \leq v \leq \lambda$,
$$
s_{v + 1} - s_v \geq 2,
$$
where we define $s_{\lambda + 1} = 2p + 1$ for convenience. To describe the structure of $\mathbb{I}_p(A)$, we define for $2 \leq N \leq 2p$,
$$
\mathbb{B}_N : = \bigg\{ (m_1, \ldots, m_N) \in (\Z \backslash \{ 0 \})^{N} \, ; \, \sum_{t = 1}^{N} m_t = 0 \text{ and }   \sum_{t = s}^{N} m_t \ne 0 \text{ for } s = 2, \ldots, N   \bigg\}.
$$
For each $(m_1, \ldots, m_{2p}) \in \mathbb{I}_p(A)$, by definition, for any $1 \leq v \leq \lambda$, $\sum_{t = s_v}^{2p} m_t = \sum_{t = s_{v + 1}}^{2p} m_t = 0$, which implies that $\sum_{t = s_v}^{s_{v+1} - 1} m_t = 0$. Furthermore, by construction, $(m_{s_v}, \ldots, m_{s_{v+1} - 1}) \in \mathbb{B}_{s_{v+1} - s_v}$. Therefore, $\mathbb{I}_p(A)$ can be expressed as a direct product of finitely many $\mathbb{B}_N$, namely
$$
\mathbb{I}_p (A) = \mathbb{B}_{N_1} \times \cdots \times \mathbb{B}_{N_\lambda},
$$
where for $1 \leq v \leq \lambda$, $N_v = s_{v+1} - s_v$. We denote by $Q_p$ the set of all the sequences of integers $(N_v)_{v = 1}^\lambda$ such that $\sum_{v = 1}^{\lambda} N_v =2p$ and $N_v \geq 2$. Given any $(N_v)_{v = 1}^\lambda \in Q_p$, it is obvious that the direct product of $\mathbb{B}_{N_v}$'s is contained in $\mathbb{I}_p$ and not empty. Finally, we have
\begin{align} \label{structure}
\mathbb{I}_p = \bigcup_{(N_v)_{v = 1}^\lambda \in Q_p} \mathbb{B}_{N_1} \times \cdots \times \mathbb{B}_{N_\lambda}.
\end{align}
For our purposes, we define the following function
$$
F (N, \varepsilon) : = \sum_{(m_1, \ldots, m_N) \in \mathbb{B}_N}  \prod_{s = 1}^{N} \frac{\e^{-4\pi^2 m_s^2 \varepsilon}}{|m_s|} \prod_{s = 2}^{N} \frac{1}{\| \sum_{t = s}^{N} m_t \alpha \|^\beta},
$$
where $2 \leq N \leq 2p$ and $\varepsilon \in (0, 1/2)$. Combining \eqref{med} and \eqref{structure} yields
\begin{align} \label{med1}
\E \left[ \int_{\mathbb{T}} |\nabla (-\Delta)^{-1} (f_{n, \varepsilon} - 1)|^{2p} \, \d \mu   \right] \leq c \sum_{(N_v)_{v = 1}^\lambda \in Q_p} n^{- (2p - \lambda)} \prod_{v = 1}^{\lambda} F(N_v, \varepsilon).
\end{align}

For the further developments, we need the following two lemmas. The proof of the first one simply uses Lemma \ref{Lem2.2} and the argument presented in Section \ref{Pf of T1} so we omit it.

\begin{lem} \label{lem6.2}
Let $\alpha$ be an irrational number such that $\| q \alpha \| \geq C q^{-\gamma}$ for any $q \in \N$ with some constants $\gamma \geq 1$ and $C > 0$. Given any $\theta > 1$ and $\tau > 0$ which satisfy $\theta = \gamma \tau$, then for $\varepsilon \in (0, 1/2)$,
\begin{align*}
\sum_{m \ne 0} \frac{\e^{-m^2 \varepsilon}}{|m|^\theta \| m \alpha \|^\tau} = O \big( \log(\varepsilon^{-1}) \big).
\end{align*}
\end{lem}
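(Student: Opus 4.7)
The plan is to mirror the telescoping-by-convergents argument that underlies the upper bound in Section \ref{Pf of T1}, using the Diophantine hypothesis to cancel the denominator $\|q_k\alpha\|^{\tau}$ against $q_k^{-\theta}$ and then summing a purely exponential tail in $k$.

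\textbf{Step 1 (symmetrize and telescope).} By the symmetry $\|(-m)\alpha\|=\|m\alpha\|$, it suffices to bound $\sum_{m\ge 1}\e^{-m^2\varepsilon}\, m^{-\theta}\|m\alpha\|^{-\tau}$. Let $(p_k/q_k)_{k\in\N}$ denote the convergents of $\alpha$. I would split the $m$-sum according to the intervals $[q_k,q_{k+1})$ and, on each such block, pull the monotone factor $\e^{-m^2\varepsilon}$ out by the crude bound $\e^{-m^2\varepsilon}\le \e^{-q_k^2\varepsilon}$, obtaining
\begin{equation*}
\sum_{m\ge 1}\frac{\e^{-m^2\varepsilon}}{m^\theta \|m\alpha\|^\tau}\ \le\ \text{(finite initial terms)}\ +\ \sum_{k\ge 3}\e^{-q_k^2\varepsilon}\sum_{q_k\le m<q_{k+1}}\frac{1}{m^\theta\|m\alpha\|^\tau}.
\end{equation*}

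\textbf{Step 2 (apply Lemma \ref{Lem2.2}).} The inner sum is controlled by Lemma \ref{Lem2.2}: it is
\begin{equation*}
O\!\Bigl(\frac{1}{q_k^\theta\|q_k\alpha\|^\tau}\Bigr)+R_k(\tau),
\end{equation*}
where $R_k(\tau)$ is $O(q_k^{-(\theta-1)})$, $O(q_k^{-(\theta-1)}\log q_k)$, or $O(q_k^{-\theta+\tau})$ according to whether $\tau<1$, $\tau=1$, or $\tau>1$. The hypothesis $\|q_k\alpha\|\ge Cq_k^{-\gamma}$ combined with $\theta=\gamma\tau$ yields the crucial cancellation
\begin{equation*}
\frac{1}{q_k^\theta\|q_k\alpha\|^\tau}\ \le\ \frac{q_k^{\gamma\tau}}{C^\tau q_k^\theta}\ =\ C^{-\tau},
\end{equation*}
so the main contribution on the $k$-th block is uniformly $O(1)$. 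For the remainder $R_k(\tau)$, since $\theta>1$ (and, when $\gamma=1$ and $\tau>1$, $\theta-\tau=0$), one checks that $R_k(\tau)$ is either uniformly $O(1)$ or decays as a negative power of $q_k$, in all cases bounded by an absolute constant.

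\textbf{Step 3 (exponential tail in $k$).} Because $q_{k+1}=a_kq_k+q_{k-1}\ge 2q_{k-1}$, the sequence $(q_k)$ grows at least geometrically, hence each dyadic interval $[2^l,2^{l+1})$ contains at most two convergent denominators. Consequently
\begin{equation*}
\sum_{k\ge 3}\e^{-q_k^2\varepsilon}\ \le\ 2\sum_{l\ge 1}\e^{-4^l\varepsilon}\ =\ 2\,g_0(\varepsilon),
\end{equation*}
and Lemma \ref{lem3.3} with $\eta=0$ gives $g_0(\varepsilon)\asymp \log(\varepsilon^{-1}+1)$. The same dyadic comparison shows that the residual series $\sum_k\e^{-q_k^2\varepsilon}R_k(\tau)$ is either dominated by the same logarithm (when $R_k$ is $O(1)$) or by a convergent geometric series (when $R_k$ decays). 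Summing these contributions together with the finitely many initial terms yields the desired bound $O(\log\varepsilon^{-1})$.

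\textbf{Expected difficulty.} No single step should be a real obstacle, since all the heavy machinery has been set up already. The only point requiring genuine care is the case analysis on $\tau$ in Step 2 when $\gamma=1$: there $\theta=\tau$, and one must verify that the exponent $\theta-\tau=0$ in the third branch of Lemma \ref{Lem2.2} does not break summability — which is exactly why the conclusion has a logarithmic factor and not a constant.
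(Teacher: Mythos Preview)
Your proposal is correct and follows exactly the approach the paper intends: the paper omits the proof, saying only that it ``simply uses Lemma \ref{Lem2.2} and the argument presented in Section \ref{Pf of T1}'', which is precisely your Steps 1--3 (block decomposition along convergents, Lemma \ref{Lem2.2} on each block with the Diophantine cancellation $q_k^{-\theta}\|q_k\alpha\|^{-\tau}=O(1)$, then the dyadic count and Lemma \ref{lem3.3} with $\eta=0$). Your case analysis on $\tau$ and the edge case $\gamma=1$ is also handled correctly.
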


\begin{lem} \label{lem6.3}
Let $\alpha$ be the same as that in Proposition \ref{proposition1} and $\beta = 2/\gamma$. Then,  there exists $c > 0$ such that for any $m_1 \in \Z \backslash \{ 0 \}$ and $\varepsilon \in (0, 1/2)$,
$$
\sum_{m \in \Z \backslash \{ 0, \, m_1 \}} \frac{\e^{-4 \pi^2 m^2 \varepsilon} \cdot \e^{-4 \pi^2 (m_1 - m)^2 \varepsilon}}{|m| |m_1 - m| \| (m_1 - m) \alpha  \|^\beta} \leq  c \varepsilon^{-1} \frac{\e^{- \frac12 \pi^2 m_1^2 \varepsilon}}{|m_1|}.
$$

\end{lem}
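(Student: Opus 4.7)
I would reduce this two-variable sum to a one-dimensional Gaussian sum via the algebraic identity
\[
m^2+(m_1-m)^2 \;=\; \frac{m_1^2+(2m-m_1)^2}{2},
\]
which factors the exponential kernel as $\e^{-4\pi^2 m^2\varepsilon-4\pi^2(m_1-m)^2\varepsilon} = \e^{-2\pi^2 m_1^2\varepsilon}\,\e^{-2\pi^2(2m-m_1)^2\varepsilon}$. Writing $\e^{-2\pi^2 m_1^2\varepsilon} = \e^{-\pi^2 m_1^2\varepsilon/2}\cdot\e^{-3\pi^2 m_1^2\varepsilon/2}$, the first factor matches the exponential in the target bound, while the second is held in reserve to absorb polynomial growth in $|m_1|$ via the elementary sup bound $\sup_{x\ge 0}x^a\e^{-Cx^2\varepsilon} = O(\varepsilon^{-a/2})$.

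Next, the Diophantine hypothesis $\|q\alpha\|\ge Cq^{-\gamma}$ combined with $\beta\gamma=2$ gives $\|(m_1-m)\alpha\|^\beta \geq C|m_1-m|^{-2}$, converting the Diophantine factor into a polynomial one. Using the pointwise inequality
\[
\frac{1}{|m||m_1-m|} \;\leq\; \frac{1}{|m_1|}\Bigl(\frac{1}{|m|}+\frac{1}{|m_1-m|}\Bigr)
\]
to produce the $1/|m_1|$ prefactor and substituting $k=m_1-m$, the problem reduces to bounding two sums of the schematic type $\sum_k \e^{-2\pi^2(m_1-2k)^2\varepsilon}/(|k|^a\|k\alpha\|^\beta)$ (with $a\in\{0,1\}$) by $O(\varepsilon^{-1}\e^{3\pi^2 m_1^2\varepsilon/2})$. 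After the further substitution $j=m_1-2k$, both sums reduce to Gaussian-weighted expressions attackable with the standard moment estimates $\sum_j|j|^a\e^{-cj^2\varepsilon} = O(\varepsilon^{-(a+1)/2})$.

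The hard step is controlling the denominator $|m_1-k|$ in the second reduced sum: after the substitutions it produces a factor $1/|m_1+j|$ together with a quadratic numerator in $(m_1-j)$, leading in the worst regime $|m_1|\sim\varepsilon^{-1/2}$ to a contribution of order $\varepsilon^{-1/2}\log\varepsilon^{-1}$ from a harmonic-type sub-sum over $j$ near $-m_1$. The final absorption relies on the elementary inequality $\log\varepsilon^{-1}\le C\varepsilon^{-1/2}$ valid uniformly on $(0,1/2)$, so that $\varepsilon^{-1/2}\log\varepsilon^{-1}\le C\varepsilon^{-1}$ lies safely within the target; and for $|m_1|\gtrsim\varepsilon^{-1/2}$, the surplus exponential $\e^{-3\pi^2 m_1^2\varepsilon/2}$ dominates any polynomial or logarithmic factor in $|m_1|$ via the sup bound. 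Assembling all contributions yields the claimed estimate $c\varepsilon^{-1}\e^{-\pi^2 m_1^2\varepsilon/2}/|m_1|$.
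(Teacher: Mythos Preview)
Your plan has a genuine gap in the treatment of the second sum (the one carrying the factor $1/|m_1-k|$). After the pointwise Diophantine replacement $\|k\alpha\|^{-\beta}\le C'|k|^2$ and the substitution $j=m_1-2k$, that sum becomes (up to constants) $\sum_j (m_1-j)^2\,|m_1+j|^{-1}\e^{-2\pi^2 j^2\varepsilon}$. In the regime $|m_1|\sim\varepsilon^{-1/2}$, consider the range $k\in[3m_1/4,5m_1/4]\setminus\{m_1\}$ (equivalently $j$ near $-m_1$): there $(m_1-j)^2\asymp m_1^2\asymp\varepsilon^{-1}$, the Gaussian weight is $\asymp 1$, and the harmonic factor contributes $\sum_{1\le|m_1-k|\le m_1/4}|m_1-k|^{-1}\asymp\log|m_1|\asymp\log\varepsilon^{-1}$. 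Hence this sub-sum alone is $\asymp\varepsilon^{-1}\log\varepsilon^{-1}$, not $\varepsilon^{-1/2}\log\varepsilon^{-1}$ as you assert. Since your stated target for the reduced sum is $O(\varepsilon^{-1}\e^{3\pi^2 m_1^2\varepsilon/2})\asymp\varepsilon^{-1}$ in this regime, you are over by a factor $\log\varepsilon^{-1}$, and the reserve exponential $\e^{-3\pi^2 m_1^2\varepsilon/2}$ cannot absorb it because $m_1^2\varepsilon\asymp 1$ here. This is not an artifact of a loose estimate: the replaced sum is genuinely $\gtrsim\varepsilon^{-1}\log\varepsilon^{-1}$ for this choice of $m_1$, so the pointwise Diophantine bound has already discarded the information needed to close.

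The paper sidesteps this by \emph{not} replacing $\|k\alpha\|^{-\beta}$ pointwise. It splits the original $m$-sum into $R_1=\{|m|>|m_1|/2\}$ (handled essentially as your Sum~A, via Lemma~\ref{lem6.2}) and $R_2^\pm=\{1\le\pm m\le|m_1|/2\}$. On $R_2^+$ it applies summation by parts with $b_m=\e^{-4\pi^2 m^2\varepsilon}/m$ against the partial sums $s_m=\sum_{j=1}^m\|(m_1-j)\alpha\|^{-\beta}$, and the crucial input is the \emph{averaged} Diophantine estimate $s_m=O(m_1^2\log(m+1))$, obtained from the continued-fraction Lemma~\ref{Lem2.2} by covering $[m_1-m,m_1-1]$ with $O(\log m)$ intervals $[q_k,q_{k+1})$. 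The pointwise bound would only give $s_m=O(m\,m_1^2)$; it is precisely the gain $\log m$ versus $m$ that makes $\sum_m(b_m-b_{m+1})s_m=O(m_1^2)$ finite and recovers the sharp $\varepsilon^{-1}$. Your partial-fraction/Gaussian reduction is clean and handles Sum~A correctly, but for Sum~B you need to import an averaged bound of this type rather than the pointwise $\|q\alpha\|\ge Cq^{-\gamma}$.
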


\begin{proof}
We split the sum considered into the sets $R_1 : = \{ |m| > |m_1|/2 \} \backslash \{ m_1 \}$, $R_2^+ : = \{ 1 \leq m \leq |m_1|/2 \}$ and $R_2^- : = \{ - |m_1|/2 \leq m \leq -1 \}$. By Lemma \ref{lem6.2} and the condition $\beta \gamma = 2$, there exists $c_1 > 0$ such that
\begin{equation*}
\begin{split}
\sum_{m \in R_1} \frac{\e^{- 4 \pi^2 m^2 \varepsilon} \cdot \e^{- 4 \pi^2 (m_1 - m)^2 \varepsilon}}{|m| |m_1 - m| \| (m_1 - m) \alpha  \|^\beta}
& \leq \frac{2 \e^{- \pi^2 m_1^2 \varepsilon} }{|m_1|} \sum_{m \in R_1} \frac{|m_1 - m| \e^{- 2 \pi^2 (m_1 - m)^2 \varepsilon} \cdot \e^{- 2 \pi^2 (m_1 - m)^2 \varepsilon}}{|m_1 - m|^2 \| (m_1 - m) \alpha  \|^\beta}\\
& \leq  \frac{\varepsilon^{-\frac12}\e^{- \pi^2 m_1^2 \varepsilon}}{\sqrt{2} \pi |m_1|} \sum_{m \ne 0} \frac{\e^{- 2 \pi^2 m^2 \varepsilon} }{ |m|^2 \| m \alpha \|^\beta} \leq  c_1 \varepsilon^{-\frac12} \log(\varepsilon^{-1}) \frac{\e^{- \pi^2 m_1^2 \varepsilon}}{|m_1|},
\end{split}
\end{equation*}
where in the last line we used the inequality $x^{\frac12} \leq \e^x/2$ for $x \geq 0$.

Then we turn to the sum over $R_2^+$. Since $|m_1 - m| \geq |m_1| /2$ on $R_2^+$, we obtain
\begin{align} \label{summ2'}
\sum_{m \in R_2^+} \frac{\e^{- 4 \pi^2 m^2 \varepsilon} \cdot \e^{- 4 \pi^2 (m_1 - m)^2 \varepsilon}}{|m| |m_1 - m| \| (m_1 - m) \alpha  \|^\beta} \leq \frac{2 \e^{- \pi^2 m_1^2 \varepsilon}}{|m_1|} \sum_{m = 1}^{\lfloor |m_1|/2 \rfloor} \frac{\e^{- 4 \pi^2 m^2 \varepsilon}}{m \| (m_1 - m) \alpha  \|^\beta}.
\end{align}
Now we define for $m  = 1, \ldots, \lfloor |m_1|/2 \rfloor$,
\begin{align*}
b_m := \frac{\e^{- 4 \pi^2 m^2 \varepsilon}}{m}, \quad s_m : = \sum_{j = 1}^{m} \frac{1}{\| (m_1 - j) \alpha  \|^\beta},
\end{align*}
and $s_0 = 0$. A direct computation shows that
\begin{equation}
\begin{split} \label{estimate for bm}
0 < b_m - b_{m+1} &= \e^{- 4 \pi^2 m^2 \varepsilon}  \bigg( \frac{1}{m(m+1)} + \frac{1 - \e^{- 4 \pi^2 (2m+1)\varepsilon}}{m+1} \bigg)\\
& \leq \frac{1}{m^2} + 8\pi^2 \varepsilon \e^{- 4 \pi^2 m^2 \varepsilon} \leq \frac{3}{m^2}.
\end{split}
\end{equation}
On the other hand, let $(q_k)_{k \in \N}$ be the convergent denominators of $\alpha$. By Lemma \ref{Lem2.2}, it is known that
$$ \sum_{q_k \leq q < q_{k+1}} \frac{1}{q^2 \| q \alpha \|^\beta} = O(1).
$$
And using the fact that $q_{k + 1} \geq 2 q_{k - 1}$, we deduce that there exists a uniform constant $c_2 > 0$ such that for any $m \in \Z^+$ and $m_1 \in \Z$, the interval $[m_1 - m, m_1 - 1]$ can be covered by no more than $c_2 \log( m + 1)$ intervals of the form $[q_k, q_{k+1})$ or $(-q_{k+1}, -q_k]$. Consequently, there exists $c_3 > 0$, which is independent of $m_1$, such that
\begin{align}  \label{estimate for sm}
s_m \leq m_1^2  \sum_{j = 1}^{m} \frac{1}{(m_1 - j)^2 \| (m_1 - j) \alpha  \|^\beta} \leq c_3 m_1^2 \log (m + 1).
\end{align}
Combining \eqref{summ2'}, \eqref{estimate for bm}, \eqref{estimate for sm} and summing by parts yields
\begin{equation*}
\begin{split}
& \sum_{m \in R_2^+} \frac{\e^{- 4 \pi^2 m^2 \varepsilon} \cdot \e^{- 4 \pi^2 (m_1 - m)^2 \varepsilon}}{|m| |m_1 - m| \| (m_1 - m) \alpha  \|^\beta}\\ &\leq \frac{2 \e^{- \pi^2 m_1^2 \varepsilon}}{|m_1|} \sum_{m = 1}^{\lfloor |m_1|/2 \rfloor} b_m (s_m - s_{m-1}) = \frac{2 \e^{- \pi^2 m_1^2 \varepsilon}}{|m_1|} \bigg(  \sum_{m = 1}^{\lfloor |m_1|/2 \rfloor - 1} (b_{m+1} - b_m) s_m + b_{\lfloor m_1/2 \rfloor} s_{\lfloor m_1/2 \rfloor}   \bigg)\\
& \leq \frac{c_4 \e^{- \pi^2 m_1^2 \varepsilon}}{|m_1|}\big( m_1^2 + |m_1| \log(|m_1| + 1) \big) \leq c_5 \varepsilon^{-1} \frac{ \e^{- \frac12 \pi^2 m_1^2 \varepsilon}}{|m_1|}
\end{split}
\end{equation*}
for some constants $c_4, c_5 > 0$. The sum over $R_2^-$ can be evaluated in the same way. The proof is concluded by adding these estimates together.
\end{proof}

Based on Lemmas \ref{lem6.2} and \ref{lem6.3}, the upper bound  on $F(N, \varepsilon)$ is achieved in the following result.

\begin{lem} \label{estimate for FN}
Given $p \in \Z^+$.
Then for any $2 \leq N \leq 2 p$ and $\varepsilon \in (0, 1/2)$,
$$
F(N, \varepsilon) = O \big( \varepsilon^{-(N-2)} \log(\varepsilon^{-1})  \big),
$$
where the implied constant only depends on $p$ and the Diophantine approximation properties of $\alpha$.
\end{lem}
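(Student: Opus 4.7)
The plan is to recast $F(N, \vv)$ as a nested one-variable sum through the substitution $k_s := \sum_{t = s}^{N} m_t$ for $s = 2, \ldots, N$, together with the convention $k_1 = k_{N+1} = 0$. Since $m_s = k_s - k_{s+1}$, the set $\mathbb{B}_N$ is in bijection with
$$
\big\{ (k_2, \ldots, k_N) \in (\Z \setminus \{0\})^{N-1} \, ; \, k_s \ne k_{s+1} \text{ for } s = 2, \ldots, N-1 \big\},
$$
and the summand factors as
$$
\frac{\e^{-4\pi^2 k_2^2 \vv}}{|k_2| \, \|k_2 \alpha\|^\beta} \cdot \prod_{s = 2}^{N - 1} \frac{\e^{-4\pi^2 (k_s - k_{s+1})^2 \vv}}{|k_s - k_{s+1}| \, \|k_{s+1} \alpha\|^\beta} \cdot \frac{\e^{-4\pi^2 k_N^2 \vv}}{|k_N|}.
$$
This shape is tailor-made for an iterative application of Lemma \ref{lem6.3}, which eliminates one variable at the cost of a factor $\vv^{-1}$.

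For $2 \le j \le N$ and $k \ne 0$, let $B_j(k, \vv)$ denote the partial sum obtained by fixing $k_j = k$ and summing over $(k_{j+1}, \ldots, k_N)$, so that $B_N(k, \vv) = \e^{-4\pi^2 k^2 \vv}/|k|$ and
$$
B_j(k, \vv) = \sum_{\substack{k' \ne 0 \\ k' \ne k}} \frac{\e^{-4\pi^2 (k - k')^2 \vv}}{|k - k'| \, \|k' \alpha\|^\beta} \, B_{j+1}(k', \vv).
$$
The core step is to prove by downward induction on $j$ that there exist positive constants $c_i, \delta_i$ (for $0 \le i \le 2p - 2$) such that
$$
B_j(k, \vv) \le c_{N - j} \, \vv^{-(N - j)} \, \frac{\e^{-\delta_{N - j} k^2 \vv}}{|k|}.
$$
After the change of variable $l = k - k'$, the recursion becomes
$$
B_j(k, \vv) \le c_{N - j - 1} \, \vv^{-(N - j - 1)} \sum_{\substack{l \ne 0 \\ l \ne k}} \frac{\e^{-4\pi^2 l^2 \vv} \, \e^{-\delta_{N - j - 1} (k - l)^2 \vv}}{|l| \, |k - l| \, \|(k - l) \alpha\|^\beta},
$$
which is precisely the shape of the sum bounded in Lemma \ref{lem6.3}, except that the two Gaussian exponents $4\pi^2$ and $\delta_{N - j - 1}$ are now distinct. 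A mild generalization of Lemma \ref{lem6.3} with arbitrary positive coefficients $a, b$ in place of $4\pi^2$ on both sides is therefore required: invoking the identity $a m^2 + b (m_1 - m)^2 \ge \tfrac{ab}{a + b} m_1^2$ and then repeating the three-region decomposition ($|m| > |m_1|/2$ versus $0 < \pm m \le |m_1|/2$) from the original proof yields a bound $O \big( \vv^{-1} \e^{-\delta m_1^2 \vv}/|m_1| \big)$ with some $\delta > 0$ depending only on $a, b$. Applied with $a = 4\pi^2$ and $b = \delta_{N - j - 1}$, this delivers the next pair $(c_{N - j}, \delta_{N - j})$ and closes the induction.

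With the bound on $B_2$ in hand, we assemble
$$
F(N, \vv) = \sum_{k_2 \ne 0} \frac{\e^{-4\pi^2 k_2^2 \vv}}{|k_2| \, \|k_2 \alpha\|^\beta} \, B_2(k_2, \vv) \le c_{N - 2} \, \vv^{-(N - 2)} \sum_{k \ne 0} \frac{\e^{-(4\pi^2 + \delta_{N - 2}) k^2 \vv}}{k^2 \, \|k \alpha\|^\beta},
$$
and Lemma \ref{lem6.2} with $\theta = 2$ and $\tau = \beta$ (admissible since $\beta \gamma = 2$, after rescaling $\vv$) controls the remaining sum by $O(\log(\vv^{-1}))$, giving the stated estimate. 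The main delicate point is the bookkeeping of the Gaussian exponents: each iteration strictly shrinks $\delta_i$, so one must check that $\delta_0, \ldots, \delta_{2p - 2}$ remain bounded below by a positive constant depending only on $p$. This is immediate, since the map $\delta_i \mapsto \delta_{i + 1}$ induced by the generalized Lemma \ref{lem6.3} sends positives to positives and is iterated only finitely many (at most $2p - 2$) times. Beyond this and the straightforward verification of the generalized form of Lemma \ref{lem6.3}, no further ingredients are needed.
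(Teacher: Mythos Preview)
Your proof is correct and follows essentially the same strategy as the paper: peel off one variable at a time via Lemma~\ref{lem6.3}, gaining a factor $\varepsilon^{-1}$ at each step, and finish with Lemma~\ref{lem6.2}. The only difference is organizational: the paper packages the iteration as the recursion $F(N+1,\varepsilon)\le c\,\varepsilon^{-1}F(N,\varepsilon/8)$, which absorbs the shrinking Gaussian exponent by rescaling $\varepsilon$ rather than by tracking constants $\delta_i$ and invoking a generalized version of Lemma~\ref{lem6.3}; this sidesteps the extra verification you outline, but the two bookkeeping devices are equivalent.
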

\begin{proof}
The case $N =2$ can be derived directly from Lemma \ref{lem6.2}. Indeed,
\begin{align*}
F(2, \varepsilon) = \sum_{m \ne 0} \frac{\e^{-8 \pi^2 m^2 \varepsilon}}{|m|^2 \| m \alpha \|^\beta } = O(\log(\varepsilon^{-1})).
\end{align*}

As for the case $N \geq 3$, it is not hard to see that $\mathbb{B}_N$'s satisfy the following recurrence relation
$$
\mathbb{B}_{N + 1} = \bigg\{ (m_1 - m, m, m_2, \ldots, m_N) \, ; \, (m_1, m_2, \ldots, m_N) \in \mathbb{B}_N, \, m \in \Z \backslash \{ 0, \, m_1 \}  \bigg\}.
$$
Therefore,
\begin{align*}
& F(N+1, \varepsilon)\\
& =  \!\sum_{(m_1, m_2, \ldots, m_N) \in \mathbb{B}_N} \bigg( \sum_{m \in \Z \backslash \{ 0, \, m_1 \}} \frac{\e^{-4 \pi^2 m^2 \varepsilon} \cdot \e^{-4 \pi^2 (m_1 - m)^2 \varepsilon}}{|m| |m_1 - m| \| (m_1 - m) \alpha  \|^\beta}  \bigg)  \prod_{s = 2}^{N} \frac{\e^{-4\pi^2 m_s^2 \varepsilon}}{|m_s|} \prod_{s = 2}^{N} \frac{1}{\| \sum_{t = s}^{N} m_t \alpha \|^\beta},
\end{align*}
where we also used the fact that $m + \sum_{t = 2}^{N} m_t = -(m_1 - m)$. Combined with Lemma \ref{lem6.3}, it follows that
$$
F(N + 1, \varepsilon) \leq c \varepsilon^{-1}  F(N, \varepsilon/8),
$$
which concludes the proof by induction over $N$.
\end{proof}

We are now ready to prove Proposition \ref{proposition1}. Combining Lemma \ref{estimate for FN} and \eqref{med1}, we obtain
\begin{align*}
\E \left[ \int_{\mathbb{T}} |\nabla (-\Delta)^{-1} (f_{n, \varepsilon} - 1)|^{2p} \, \d \mu   \right] \leq c n^{-p} \cdot \sum_{(N_v)_{v = 1}^\lambda \in Q_p} ( \varepsilon^2 n )^{- (p - \lambda)} \log(\varepsilon^{-1})^\lambda.
\end{align*}
Then using the choice of $\varepsilon$ and the fact $\lambda \leq p$, we complete the proof of Proposition \ref{proposition1}.

\subsubsection{Finishing the proof}

Since ${\rm Var}(X_1) > 0$, $X_1$ is non-constant. According to \cite[Proposition 3.2]{BB}, there exist $c > 0$ and $D \in \Z^+$ such that
$$ 1- |\varphi(2\pi x)|\ge c \|D x \|^2, \quad  x\in\R.
$$ 
Then applying Proposition \ref{proposition1} with $p = 2$, there exists $C_1 > 0$ such that
\begin{align*}
\text{B}(n, n^{-\frac12}) \leq  \frac{C_1 (\log n)^2}{n^2}.
\end{align*}
On the other hand, combining \eqref{J2} and \eqref{J1} shows that there exists $C_2 > 0$ such that
\begin{align*}
\text{A}(n, n^{-\frac12}) \geq \frac{C_2 \log n}{n}.
\end{align*}
Then by \eqref{LOWER}, if we choose $M = \Theta \sqrt{\frac{\log n}{n}}$, where $\Theta > 0$ is a sufficiently large constant, then there exists $C_3 > 0$ such that
\begin{align*}
\E \big[ \W_1(\mu_n, \mu)  \big] \geq C_3 \sqrt{\frac{\log n}{n}},
\end{align*}
which completes the proof of Theorem \ref{special case}.

\subsection{Proofs of Corollaries \ref{path-wise}, \ref{path-wise'} and \ref{path-wise''}}

We begin with the following lemma.

\begin{lem}
Let $X_1,X_2,\ldots$ be i.i.d. random variables, and let $\alpha \in \R$. \footnote{Note that in this lemma, $X_1$ is not required to be integer-valued, and $\alpha$ is not required to be irrational.} Set $\mu_n = \frac{1}{n} \sum_{j = 1}^{n} \delta_{\{ S_j \alpha \}}$, $n \geq 1$. Then the event
$$
E : = \bigg\{  \limsup_{n \to \infty}   B_n^{-1} \W_1(\mu_n, \mu) \geq 1    \bigg\}
$$
has probability $0$ or $1$, where $(B_n)_{n \in \N}$ is a sequence of non-negative numbers and satisfies $\lim_{n \to \infty} n B_n = \infty$.
\end{lem}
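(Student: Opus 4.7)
The plan is to apply the Hewitt--Savage zero--one law. Since the event $E$ is defined in terms of the i.i.d.\ sequence $(X_n)_{n\ge1}$, it suffices to verify that $E$ lies in the exchangeable $\sigma$-algebra, namely that $E$ is invariant under every permutation of $(X_n)$ that fixes all but finitely many coordinates.

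Fix such a permutation $\sigma$, acting as the identity outside $\{1,\ldots,k\}$ for some $k \in \mathbb{N}$. Let $S_j^\sigma := \sum_{i=1}^{j} X_{\sigma(i)}$ and $\mu_n^\sigma := \frac{1}{n}\sum_{j=1}^n \delta_{\{S_j^\sigma \alpha\}}$. The crucial observation is that for every $j \geq k$ one has $S_j^\sigma = S_j$, because $\{X_{\sigma(1)},\ldots,X_{\sigma(k)}\} = \{X_1,\ldots,X_k\}$ as multisets and the tail $X_{k+1},\ldots,X_j$ is untouched. Consequently $\mu_n^\sigma$ and $\mu_n$ differ in at most $k$ of their $n$ atoms whenever $n \geq k$.

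The next step is to quantify this in $\W_1$. Using the trivial coupling which matches the $n-k$ common atoms at zero transportation cost and transports the remaining mass (of total weight at most $k/n$) at a per-unit cost bounded by $\operatorname{diam}(\mathbb{T}) = 1/2$, I would obtain
\begin{equation*}
\W_1(\mu_n^\sigma, \mu_n) \, \leq \, \frac{k}{2n}.
\end{equation*}
The triangle inequality then yields
\begin{equation*}
\big| \W_1(\mu_n^\sigma, \mu) - \W_1(\mu_n, \mu) \big| \, \leq \, \W_1(\mu_n^\sigma, \mu_n) \, \leq \, \frac{k}{2n}.
\end{equation*}

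Multiplying by $B_n^{-1}$ and invoking the hypothesis $\lim_{n\to\infty} n B_n = \infty$, I get $B_n^{-1} \cdot k/(2n) \to 0$ (for $k$ fixed). Hence
\begin{equation*}
\limsup_{n \to \infty} B_n^{-1} \W_1(\mu_n^\sigma, \mu) \, = \, \limsup_{n \to \infty} B_n^{-1} \W_1(\mu_n, \mu),
\end{equation*}
so the event $E$ is invariant under $\sigma$. Since $\sigma$ was an arbitrary finite permutation, $E$ belongs to the exchangeable $\sigma$-algebra, and Hewitt--Savage gives $\mathbb{P}(E) \in \{0,1\}$. I do not expect a real obstacle here: the only delicate point is checking that the growth condition $n B_n \to \infty$ is precisely what is needed to absorb the finite-permutation perturbation $k/(2n)$, which is exactly how the hypothesis appears in the statement.
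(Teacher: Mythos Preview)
Your proof is correct and follows essentially the same approach as the paper: both argue that a finite permutation leaves $S_j$ unchanged for $j\ge k$, bound $\W_1(\mu_n^\sigma,\mu_n)$ by $k/(2n)$ via the obvious coupling, use $nB_n\to\infty$ with the triangle inequality to conclude the $\limsup$ is permutation-invariant, and then invoke Hewitt--Savage. The only cosmetic difference is that the paper writes the coupling bound as $\W_1(\mu_n,\mu_n')\le \frac{1}{n}\sum_{j=1}^n \|S_j\alpha - S_j'\alpha\|$, which is the same pairing you describe.
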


\begin{proof}
Let $\sigma$ be any finite permutation of $\N$. Then there exists $N_\sigma \in \Z_+$ such that $\sigma(i) = i$ for any $i > N_\sigma$. Consider the permutated sequence $X_{\sigma(1)},X_{\sigma(2)},\ldots$, and let $S_j' = \sum_{i = 1}^{j} X_{\sigma(i)}$ be its partial sums. It is clear that $S_j = S_j'$ for $j \geq N_\sigma$. Set $\mu_n' = \frac{1}{n} \sum_{j = 1}^{n} \delta_{\{ S_j' \alpha \}}$, $n \geq 1$. Then by the definition of $\W_1$, for any $n \geq N_\sigma$,
\begin{align*}
\W_1(\mu_n, \mu_n') \leq \frac1n \sum_{j = 1}^{n} \| S_j \alpha - S_j' \alpha \| \leq \frac{N_\sigma}{2n}.
\end{align*}
Using triangle inequality and the assumption on $(B_n)_{n \in \N}$, we obtain
\begin{align*}
\lim_{n \to \infty} \big| B_n^{-1} \W_1(\mu_n, \mu) - B_n^{-1} \W_1(\mu_n', \mu) \big| = 0.
\end{align*}
Consequently,
\begin{align*}
\limsup_{n \to \infty}   B_n^{-1} \W_1(\mu_n, \mu) = \limsup_{n \to \infty}   B_n^{-1} \W_1(\mu_n', \mu).
\end{align*}
Therefore the event $E$ is invariant by finite permutations of the indices of $(X_i)_{i \in \N}$. By the Hewitt-Savage zero-one law \cite[Theorem 2.5.4]{Dur}, we have $\mathbb{P} (E) = 0$ or $1$.
\end{proof}

The conjunction of Proposition \ref{proposition2} and Theorem \ref{special case} implies that there exists $c \in (0, 1)$ such that
\begin{align} \label{expectation}
\E \big[ \W_1(\mu_n, \mu)  \big] \geq c \sqrt{\frac{\log n}{n}},
\end{align}
and
\begin{align} \label{expectation2}
\E \big[ \W_1^2(\mu_n, \mu)  \big] \leq \E \big[ \W_2^2(\mu_n, \mu)  \big] \leq \frac{c^{-1} \log n}{n}.
\end{align}
Combining \eqref{expectation} and \eqref{expectation2} and then applying the Paley-Zygmund inequality yields
\begin{align*}
\mathbb{P} \bigg( \W_1(\mu_n, \mu) \geq  \frac{c}{2} \sqrt{\frac{\log n}{n}}     \bigg) &\geq \mathbb{P} \bigg( \W_1(\mu_n, \mu) \geq \frac12  \E \big[ \W_1(\mu_n, \mu)  \big]    \bigg)\\
& \geq \frac14 \frac{\big( \E \big[ \W_1(\mu_n, \mu)  \big] \big)^2}{\E \big[ \W_1^2(\mu_n, \mu)  \big]} \geq \frac{c^3}{4} > 0.
\end{align*}
Therefore by Fatou's lemma, we obtain
\begin{align} \label{positivity}
\mathbb{P} \bigg( \limsup_{n \to \infty} \sqrt{\frac{n}{\log n}} \W_1(\mu_n, \mu) \geq  \frac{c}{2}     \bigg) > 0.
\end{align}

Combining \eqref{positivity} and the above lemma with $B_n = \frac{c}{2} \sqrt{\frac{\log n}{n}}$, we complete the proof of Corollary \ref{path-wise}. By the same argument and combining Proposition \ref{proposition2} with Theorem \ref{lower final}, Corollary \ref{path-wise'} follows, and Corollary \ref{path-wise''} has been already proved in Remark \ref{remark4.1}.

It should be noted that the argument developed in \cite{BB} together with the $L^p$-Erd\H{o}s-Tur\'an inequality \eqref{another1} yields some path-wise upper bounds for $\W_p(\mu_n, \mu)$ under the assumptions \eqref{strong1} or \eqref{strong2}. For instance, if $\| q \alpha \| \geq C q^{-\gamma}$
holds for any $q \in \N$ with some $\gamma \geq 1$ and $C > 0$, and \eqref{strong1} holds for some $c > 0$ and $0 < \beta \leq 2$,  then for any $2 \leq p < \infty$, with probability $1$,
\begin{align*}
\W_p(\mu_n,\mu)  =
\begin{cases}
O \bigg( \sqrt{\frac{\log \log n}{n}} \bigg) , & \textrm{if } \beta \gamma < 2;\\
O\bigg( \sqrt{\frac{\log \log n}{n}} (\log n)^{1 - \frac1p}  \bigg), & \textrm{if } \beta \gamma =2;\\
O\bigg( \big(\frac{\log \log n}{n}\big)^{1/(\beta \gamma)} \bigg),& \textrm{if } \beta \gamma >2.
\end{cases}
\end{align*}
We do not intend to discuss path-wise bounds in further detail, but only point out that the lower bound in Corollary \ref{path-wise''} for the case $\beta \gamma < 2$ is optimal.


\paragraph{Acknowledgement.} B.W. is supported by the National Key R\&D Program of China (Grant Nos. 2023YFA1010400 and 2022YFA1006003), NSFC (Grant No. 12401174), NSF-Fujian (Grant No. 2024J08051), Fujian Alliance of Mathematics (Grant No. 2023SXLMQN02)
and the Education and Scientific Research Project for Young and Middle-aged Teachers in Fujian Province of China (Grant No. JAT231014). J.-X.Z. is supported by the National Key R\&D Program of China
(Grant Nos. 2022YFA1006000 and 2020YFA0712900), NNSFC (11921001) and Qiming Program of Tianjin University (2023XQM-0001). The authors would like to thank the two anonymous referees for their many insightful comments and questions, which in particular helped us to refine Theorem \ref{special case}.

\end{document}